\documentclass[a4paper,11pt]{amsart}
\usepackage{amsfonts}
\usepackage{amssymb}
\usepackage[utf8]{inputenc}
\usepackage{amsmath}
\usepackage{pdflscape}
\usepackage{easyReview}
\usepackage{graphicx}
\usepackage{tcolorbox}
\usepackage[colorlinks=true, linkcolor=red, citecolor=blue]{hyperref}
\usepackage[]{epsfig}
\usepackage{tikz}
\usetikzlibrary{decorations.markings}

\newtheorem{theorem}{Theorem}[section]
\newtheorem{proposition}[theorem]{Proposition}
\newtheorem{lemma}[theorem]{Lemma}
\newtheorem{corollary}[theorem]{Corollary}

\newtheorem{remark}{Remark}
\theoremstyle{remark}
\newtheorem*{remarks}{Remarks}

\usepackage{mathrsfs}

\numberwithin{equation}{section}
\makeatletter
\@namedef{subjclassname@2010}{\textup{2020} Mathematics Subject Classification}
\makeatother

\begin{document}
\title[Control of NLS in Exterior Domains]{Nonlinear Schrödinger equation in an exterior domain}
\author[Capistrano--Filho]{R. A. Capistrano--Filho*}
\address{Departamento de Matem\'atica, Universidade Federal de Pernambuco, Recife, PE 50740545,  Brazil}
\email{\url{roberto.capistranofilho@ufpe.br}}
\author[Rosier]{L. Rosier}
\address{Université du Littoral Côte d’Opale, Laboratoire de Mathématiques Pures et Appliquées J. Liouville, B.P. 699, F-62228 Calais, France}
\email{\url{lionel.rosier@univ-littoral.fr}}
\author[Zhang]{B.-Y. Zhang}
\address{Department of Mathematical Sciences,  University of Cincinnati, Cincinnati, Ohio 45221, USA}
\email{\url{zhangb@ucmail.uc.edu}}
\thanks{*Corresponding author: \url{roberto.capistranofilho@ufpe.br}}
\thanks{Capistrano--Filho was partially supported by the CAPES-COFECUB program grant number 88887.879175/2023-00, CNPq grant numbers 301744/2025-4 and 421573/2023-6, and PROPG (UFPE). Rosier is supported by ANR-24-CE40-5470 and ANR-25-CE23-6846-02.}
\subjclass[2010]{93B05, 93C20, 35Q55}
\date{\today}
\keywords{Nonlinear Schr\"{o}dinger equation, exact controllability, boundary controllability, exterior domains, local smoothing}
\begin{abstract}
We study the exact controllability of the cubic nonlinear Schr\"odinger equation in the exterior $\Omega=\mathbb{R}^n\setminus\Theta$ of a non-trapping obstacle $\Theta$, with internal controls supported outside a large ball, and the corresponding boundary control problem on $\Omega_0=B_{R_0}\setminus\Theta$ with Dirichlet controls acting only on the outer sphere $\partial B_{R_0}$. Using the local smoothing effect of Burq, G\'erard and Tzvetkov, we prove observability inequalities for the linear Schr\"odinger equation in the whole scale of Sobolev spaces $H^\sigma_D(\Omega)$, $\sigma\in[-2,2]$, associated with the Dirichlet Laplacian; they require only the non-trapping assumption (no star-shapedness of $\Theta$), and their proofs use no normal boundary traces. Combined with Strichartz-type estimates and a perturbation argument, this yields local exact controllability in $H^1_0(\Omega)$ and in $H^2(\Omega)\cap H^1_0(\Omega)$, in dimensions two and three, around the zero solution and, for internal controls and under a unique continuation assumption, around nontrivial trajectories. The same method applies to the quintic equation, which is energy-critical in dimension three. Several open problems are discussed.
\end{abstract}
\maketitle
\section{Introduction\label{Sec0}}

\subsection{Setting of the problem}
The cubic nonlinear Schr\"odinger equation
\begin{equation}
\label{s1}
iu_t+\Delta u +\lambda|u|^2u=0, \qquad (x,t)\in\Omega_0\times(0,T),
\end{equation}
where $\lambda\in\mathbb{R}\setminus\{0\}$, is one of the basic models for the propagation of nonlinear dispersive waves, with applications ranging from nonlinear optics to Bose--Einstein condensation. This paper is concerned with its controllability when the state lives in a bounded domain $\Omega_0\subset\mathbb{R}^n$ and the control acts through the boundary $\Gamma:=\partial\Omega_0$, either as a Dirichlet datum
\begin{equation}
u(x,t)=h(x,t),\qquad (x,t)\in\Gamma\times(0,T),
\label{s1_a}
\end{equation}
or as a Neumann datum
\begin{equation}
\frac{\partial u}{\partial\nu}(x,t)=h(x,t),\qquad (x,t)\in\Gamma\times(0,T).
\label{s2_a}
\end{equation}
The basic question is the following.

\vglue 0.2 cm
\noindent\textbf{Question $\mathcal{A}$ -- [Boundary control problem]:} \textit{Let $s\geq0$ and $T>0$ be given. For any $u_0,u_1\in H^s(\Omega_0)$, can one find a boundary control $h$ such that \eqref{s1}--\eqref{s1_a} (or \eqref{s1}--\eqref{s2_a}) admits a solution $u\in C([0,T];H^s(\Omega_0))$ satisfying $u(x,0)=u_0(x)$ and $u(x,T)=u_1(x)$?}
\vglue 0.2 cm

When the control acts on the \emph{whole} boundary, Rosier and Zhang \cite{RoZhaJDE} answered Question $\mathcal{A}$: the systems \eqref{s1}--\eqref{s1_a} and \eqref{s1}--\eqref{s2_a} are locally exactly controllable in $H^s(\Omega_0)$, for a range of $s$ depending on $n$, around any smooth solution. Controls acting on the whole boundary are, however, rarely available in practice, and this leads to the following question.

\vglue 0.2 cm
\noindent\textbf{Question $\mathcal{B}$:}\textit{ Does Question $\mathcal{A}$ still have a positive answer when the control $h$ acts only on a part of $\Gamma$?}
\vglue 0.2 cm

We address Question $\mathcal{B}$ in a geometry which is natural from the point of view of scattering theory: a domain surrounding an obstacle, controlled from its outer boundary. Let $\Theta\subset\mathbb{R}^n$, $n\geq2$, be a nonempty compact set with smooth boundary such that $\Omega:=\mathbb{R}^n\setminus\Theta$ is connected, and assume that $\Theta$ is \emph{non-trapping}\footnote{Any light ray reflecting on the boundary of $\Theta$ according to the laws of geometric optics leaves any compact set in finite time.}. Fix $R_0>0$ such that $\Theta\subset B_{R_0}(0)$ and set
\begin{equation}\label{cond_geo}
\Omega_0:=\Omega\cap B_{R_0}(0)=B_{R_0}(0)\setminus\Theta,\qquad \Gamma=\Gamma_1\cup\Gamma_2,\qquad \Gamma_1:=\partial B_{R_0}(0),\quad \Gamma_2:=\partial\Theta .
\end{equation}
The control acts on the outer boundary $\Gamma_1$, while the boundary $\Gamma_2$ of the obstacle is not controlled (see Figure \ref{fig_geo}(a)), that is,
\begin{equation}
\label{s2}
u(x,t)=\begin{cases}
h(x,t), & x\in\Gamma_1,\\ 0, & x\in\Gamma_2,
\end{cases}
\qquad\text{(Dirichlet)}
\end{equation}
or
\begin{equation}
\label{s3}
\frac{\partial u}{\partial\nu}(x,t)=\begin{cases}
h(x,t), & x\in\Gamma_1,\\ 0, & x\in\Gamma_2,
\end{cases}
\qquad\text{(Neumann)}.
\end{equation}
Question $\mathcal{B}$ then becomes:

\vglue 0.2 cm
\noindent\textbf{Question $\mathcal{B}'$ -- [Boundary control from the outer boundary]:} \textit{Let $s\geq0$ and $T>0$ be given. For any $u_0,u_1\in H^s(\Omega_0)$, can one find a control $h$ such that \eqref{s1}--\eqref{s2} (or \eqref{s1}--\eqref{s3}) admits a solution $u\in C([0,T];H^s(\Omega_0))$ satisfying $u(x,0)=u_0(x)$ and $u(x,T)=u_1(x)$?}
\vglue 0.2 cm

In this work we give a positive answer to Question $\mathcal{B}'$ for the Dirichlet problem \eqref{s1}--\eqref{s2} in a local form: for $n\in\{2,3\}$ and $s\in\{1,2\}$, for small data in $H^s(\Omega_0)$ vanishing on $\Gamma_2$, and for every $T>0$ (Theorem \ref{principal}). The Neumann problem is discussed in Section \ref{Sec5}.

\subsection{Previous works}
The controllability of the Schr\"odinger equation has received a lot of attention in the last decades; we refer to \cite{Zuazua} for a survey of the contributions up to 2003, to \cite{KoLo,Miller,Phung,RaTaTeTu} for control issues, and to \cite{BaPu,CaGa,CaCriGa,LaTriZhang,MeOsRo,YuYa} for Carleman estimates and their applications to inverse problems. For the \emph{linear} equation, boundary controllability from a part of the boundary is classical: Machtyngier \cite{Mach} obtained it under the multiplier condition of Remark \ref{rem_geo}, and Burq \cite{Burq93} (for the plate equation; see Burq and Zworski \cite{BuZw} for the Schr\"odinger equation) proved controllability from the outer boundary for certain configurations of disjoint strictly convex obstacles satisfying additional geometric assumptions (see \cite[Section 6.2]{BuZw}), despite the presence of trapped rays, with an arbitrarily small loss of derivatives: initial data in $H^{1+\varepsilon}_0$, for any $\varepsilon>0$, are steered to zero by Dirichlet controls in $L^2$ in space and time on the outer boundary.

For the \emph{nonlinear} equation, the picture is much less complete. Illner, Lange and Teismann \cite{IlLaTe,IlLaTe1} proved local exact internal controllability on an interval with periodic boundary conditions, in $H^1_p(-\pi,\pi):=\{v\in H^1(-\pi,\pi):v(-\pi)=v(\pi)\}$, and Lange and Teismann \cite{LaTe} obtained local controllability in $H^1_0$ around a ground state for Dirichlet boundary conditions. Rosier and Zhang \cite{RoZhaSIAM} extended these results to $H^s$ for every $s\geq0$ and treated boundary controls acting at one end of the interval; Laurent \cite{Laurent} proved semi-global controllability and stabilization in this one-dimensional setting. In higher dimensions, Rosier and Zhang \cite{RoZhaJDE} obtained local exact boundary controllability on bounded domains with controls acting on the whole boundary, and \cite{RoZhaMMM} treated rectangles, combining sharp linear results with Bourgain spaces. On compact manifolds without boundary, Dehman, G\'erard and Lebeau \cite{DeGeLe} (dimension two) and Laurent \cite{Laurent2} (dimension three) proved semi-global internal controllability and stabilization under the geometric control condition and a unique continuation assumption; see also \cite{CaPa} for the fractional equation.

Exterior domains have been studied mainly from the point of view of the Cauchy problem and of stabilization. Burq, G\'erard and Tzvetkov \cite{BGT1} proved the local smoothing effect and Strichartz estimates for non-trapping obstacles and deduced global well-posedness results. Strichartz estimates with loss on general bounded or exterior domains of $\mathbb{R}^2$ and $\mathbb{R}^3$, without any non-trapping assumption, were obtained by Anton \cite{Anton}, and scale-invariant Strichartz estimates in a restricted range of exponents, in the exterior of non-trapping obstacles, by Blair, Smith and Sogge \cite{BSS}. In dimension three, the energy-critical quintic equation is well posed in $H^1_0(\Omega)$ for non-trapping obstacles, locally in time and globally for small data, by Ivanovici and Planchon \cite{IP}; see also \cite[Section 5]{BSS} for a simpler proof. For the Schr\"odinger equation with a localized damping, Aloui and Khenissi \cite{AlKh} proved uniform local energy decay estimates under an exterior geometric control condition, and Bortot and Cavalcanti \cite{BoCa} proved the exponential decay of the $L^2$ norm in the exterior of any non-trapping obstacle, for a damping (possibly nonlinear) effective outside a ball; their proof combines the local smoothing effect of \cite{BGT1} with a compactness--uniqueness argument. Cavalcanti, Corr\^ea and Domingos Cavalcanti \cite{CaCoCa} proved exponential stabilization of a damped defocusing NLS in two-dimensional exterior domains, for star-shaped obstacles and under a unique continuation assumption. Finally, Braz e Silva, Capistrano--Filho, Carvalho and Dos Santos Ferreira \cite{BrCaCaDo} proved local null controllability in $H^1(\mathbb{R}^3)$ of the energy-critical NLS with controls supported outside a ball. To our knowledge, exact controllability of the nonlinear Schr\"odinger equation in an exterior domain, or from a part of the boundary surrounding an obstacle, has not been considered before.

\begin{remark}\label{rem_geo}
If $\Theta$ is star-shaped with respect to some $x^0\in\Theta$, then, with $q(x)=x-x^0$ and $\nu$ the outward unit normal to $\Omega_0$, one has $q\cdot\nu>0$ on $\Gamma_1$ and $q\cdot\nu\leq0$ on $\Gamma_2$, so that \eqref{cond_geo} is the classical partition $\Gamma_1=\{q\cdot\nu>0\}$ of Machtyngier \cite{Mach}. Here, however, $\Theta$ is only assumed to be non-trapping. Equivalently, every generalized bicharacteristic leaves any ball $B_R\supset\Theta$; by compactness, there is then a time $L_R$ such that every unit-speed generalized bicharacteristic meets the region $\omega=\{|x|>R\}$ within time $L_R$. Thus the pair $(\Omega,\omega)$ satisfies the geometric control condition (GCC) of Bardos, Lebeau and Rauch \cite{BLR}; see Figure \ref{fig_geo}(b)--(c). The geometric time $L_R$ is not a restriction on the control time: as is usual for the Schr\"odinger equation, our results hold for every $T>0$. It is through this property, encoded in the resolvent bound \eqref{l9} and in the local smoothing effect \eqref{l12}, that the geometry enters our proofs.
\end{remark}

\begin{figure}[!ht]
\centering
\tikzset{
  omegafill/.style={fill=blue!18},
  transfill/.style={fill=blue!7},
  domfill/.style={fill=orange!22},
  obst/.style={fill=black!55,draw=black,thick},
  ray/.style={thick,red!75!black,postaction={decorate},decoration={markings,mark=at position 0.3 with {\arrow{stealth}},mark=at position 0.8 with {\arrow{stealth}}}},
}
\begin{tikzpicture}[scale=0.49,font=\scriptsize]
\clip (-4.35,-4.35) rectangle (4.35,4.35);
\fill[omegafill] (-4.35,-4.35) rectangle (4.35,4.35);
\fill[transfill] (0,0) circle (3.9);
\fill[white] (0,0) circle (2.9);
\fill[domfill] (0,0) circle (1.9);
\draw[obst] (0,0) ellipse (1.25 and 0.75);
\draw[very thick,orange!70!black] (0,0) circle (1.9);
\draw[dashed] (0,0) circle (2.9);
\draw[dotted,thick] (0,0) circle (3.9);
\node[white] at (0,0) {\(\Theta\)};
\node at (0,1.3) {\(\Omega_0\)};
\node[orange!60!black,anchor=north west,inner sep=1pt] at (0.78,-0.6) {\(\Gamma_2\)};
\node[orange!70!black,anchor=north east,inner sep=1pt] at (-135:1.95) {\(\Gamma_1\)};
\node[fill=white,inner sep=1pt] at (0,2.42) {\(\varphi{=}0\)};
\node at (0,3.36) {\(0{<}\varphi{<}1\)};
\node at (2.9,3.9) {\(\varphi{=}1\)};
\node at (-3.45,-3.85) {\(\omega\)};
\node at (0,-2.4) {\(|x|{=}R\)};
\node at (0,-3.4) {\(|x|{=}R{+}1\)};
\end{tikzpicture}\hfill
\begin{tikzpicture}[scale=0.49,font=\scriptsize]
\clip (-4.35,-4.35) rectangle (4.35,4.35);
\fill[omegafill] (-4.35,-4.35) rectangle (4.35,4.35);
\fill[white] (0,0) circle (2.9);
\draw[dashed] (0,0) circle (2.9);
\draw[obst] (0,0) ellipse (1.25 and 0.75);
\node[white] at (0,0) {\(\Theta\)};
\draw[ray] (-4.350,0.250) -- (-1.240,0.095) -- (-4.350,1.291);
\draw[ray] (1.000,-4.350) -- (0.452,-0.699) -- (1.617,-4.350);
\draw[ray] (4.350,-1.600) -- (1.236,0.113) -- (3.959,4.350);
\node at (-3.45,-3.85) {\(\omega\)};
\node[fill=white,inner sep=1pt] at (0,2.45) {\(B_R\)};
\end{tikzpicture}\hfill
\begin{tikzpicture}[scale=0.49,font=\scriptsize]
\clip (-4.35,-4.35) rectangle (4.35,4.35);
\fill[omegafill] (-4.35,-4.35) rectangle (4.35,4.35);
\fill[white] (0,0) circle (2.9);
\draw[dashed] (0,0) circle (2.9);
\draw[obst] (-1.15,0) circle (0.75);
\draw[obst] (1.15,0) circle (0.75);
\draw[very thick,red!75!black,<->,>=stealth] (-0.4,0) -- (0.4,0);
\node[red!75!black,align=center] at (0,-1.35) {trapped\ ray};
\node[white] at (-1.15,0) {\(\Theta_1\)};
\node[white] at (1.15,0) {\(\Theta_2\)};
\node at (-3.45,-3.85) {\(\omega\)};
\node[fill=white,inner sep=1pt] at (0,2.45) {\(B_R\)};
\end{tikzpicture}

\vspace{0.15cm}
\makebox[0.32\textwidth]{(a)}\hfill\makebox[0.32\textwidth]{(b)}\hfill\makebox[0.32\textwidth]{(c)}
\caption{(a) Geometry of Theorem \ref{principal}: the obstacle \(\Theta\), the domain \(\Omega_0=B_{R_0}\setminus\Theta\), the controlled boundary \(\Gamma_1=\partial B_{R_0}\) and the uncontrolled boundary \(\Gamma_2=\partial\Theta\). The internal control \(\varphi h\) of the extended problem \eqref{s1a} acts in \(\omega=\{|x|>R\}\), with \(R=R_0+1\), and vanishes on \(B_R\supset\overline{\Omega_0}\); the cut-off \(\varphi\) is nonconstant only in the annulus \(R<|x|<R+1\). (b) Since \(\Theta\) is non-trapping, every ray of geometric optics, reflected on \(\partial\Theta\), leaves \(B_R\) and enters \(\omega\) within a uniform time: the pair \((\Omega,\omega)\) satisfies the geometric control condition, for every \(R\) with \(\Theta\subset B_R\). (c) For two strictly convex obstacles, the ray bouncing between them never leaves \(B_R\), so the geometric control condition fails for \(\omega=\{|x|>R\}\); this configuration is excluded here (see Section \ref{Sec5}, item (E)).}
\label{fig_geo}
\end{figure}

\subsection{Main results}
Our first result answers Question $\mathcal{B}'$ for the Dirichlet problem.

\begin{theorem}[Exact boundary controllability]
\label{principal}
Let $\Theta$, $\Omega$, $\Omega_0$, $\Gamma_1$, $\Gamma_2$ be as in \eqref{cond_geo}, with $\Theta$ non-trapping and $\Omega$ connected, and let $T>0$, $n\in\{2,3\}$ and $s\in\{1,2\}$. Set $X^s(\Omega_0):=\{u\in H^s(\Omega_0):\ u=0 \text{ on }\Gamma_2\}$. Then there exists $\delta>0$ such that for any $u_0,u_1\in X^s(\Omega_0)$ satisfying $\|u_0\|_{H^s(\Omega_0)}\leq\delta$ and $\|u_1\|_{H^s(\Omega_0)}\leq\delta$, one can find a boundary control $h\in C([0,T];H^{s-1/2}(\Gamma_1))$ such that \eqref{s1}--\eqref{s2} admits a solution $u\in C([0,T];H^s(\Omega_0))$ satisfying
\begin{equation}\label{1.1}
u(x,0)=u_0(x), \qquad u(x,T)=u_1(x) \quad\text{in }\Omega_0.
\end{equation}
Here \eqref{s1} holds in $\mathcal{D}'(\Omega_0\times(0,T))$ and \eqref{s2} holds in the sense of traces.
\end{theorem}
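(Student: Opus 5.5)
The plan is to deduce Theorem \ref{principal} from an internal controllability result for the extended problem on the exterior domain $\Omega$, as suggested by Figure \ref{fig_geo}(a). Set $R=R_0+1$ and choose a cut-off $\varphi\in C^\infty(\mathbb{R}^n)$ with $\varphi=0$ on $B_R$, $\varphi=1$ for $|x|\geq R+1$. We then consider
\[
iu_t+\Delta u+\lambda|u|^2u=\varphi h \ \text{ in } \Omega\times(0,T),\qquad u=0 \ \text{ on } \partial\Theta .
\]
Given $u_0,u_1\in X^s(\Omega_0)$ small, we first extend them to $\tilde u_0,\tilde u_1\in H^s_D(\Omega)$ (i.e.\ $H^1_0(\Omega)$ or $H^2\cap H^1_0(\Omega)$) with compact support in $B_{R_0+1/2}$. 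We use a bounded linear extension operator across $\Gamma_1$ followed by a cut-off; the condition $u=0$ on $\Gamma_2$ is preserved, and $\|\tilde u_j\|_{H^s}\leq C\delta$.

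If the extended system is locally exactly controllable in $H^s_D(\Omega)$, we obtain $u\in C([0,T];H^s_D(\Omega))$ with $u(0)=\tilde u_0$ and $u(T)=\tilde u_1$. Since $\varphi h=0$ on $\Omega_0\subset B_R$, the restriction $u|_{\Omega_0}$ solves \eqref{s1} in $\mathcal{D}'(\Omega_0\times(0,T))$ and vanishes on $\Gamma_2$. We then take $h:=u|_{\Gamma_1}$. By the trace theorem, $h\in C([0,T];H^{s-1/2}(\Gamma_1))$, which is exactly the regularity claimed.

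The core step is internal exact controllability on $\Omega$, which I would prove in three steps.

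\textbf{(i) Observability.} For the linear equation $iv_t+\Delta_D v=0$ on $\Omega$, we need
\[
\|v_0\|_{H^{-\sigma}_D}^2\leq C\int_0^T\|\varphi v\|_{H^{-\sigma}_D}^2\,dt ,
\]
in the dual scale, for $\sigma\in\{1,2\}$. I would start at the $L^2$ level. Write $v=\chi v+(1-\chi)v$ with $\chi$ supported in $B_{R+1}$. The far part $(1-\chi)v$ is controlled by $\varphi v$ up to a commutator term localized in the annulus. The local part is handled by the local smoothing effect \eqref{l12} of Burq--Gérard--Tzvetkov, i.e.\ $\int_0^T\|\chi v\|_{H^{1/2}}^2\,dt\leq C\|v_0\|_{L^2}^2$, together with the non-trapping resolvent bound \eqref{l9}. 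This yields a weak observability inequality with a compact remainder. The remainder is removed by the standard compactness--uniqueness argument: unique continuation for eigenfunctions of $\Delta_D$ vanishing on $\omega$, where there are none since $\Omega$ is connected and the Dirichlet Laplacian on an exterior domain has no eigenvalues. The estimate then transfers to the Sobolev scale $H^\sigma_D$ because $\Delta_D$ commutes with the flow, up to commutators with $\varphi$ which are again handled by local smoothing.

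\textbf{(ii) HUM.} Applying HUM gives a bounded linear control operator $\mathcal{L}:(\tilde u_0,\tilde u_1)\mapsto h$ with values in $L^2(0,T;H^s_D)$, or better $C([0,T];H^s)$ if we use the smoothed version $h=\varphi\, e^{it\Delta}$-type controls.

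\textbf{(iii) Contraction.} We define
\[
\Gamma(w)=e^{it\Delta_D}\tilde u_0-i\int_0^t e^{i(t-\tau)\Delta_D}\bigl(\lambda|w|^2w-\varphi h_w\bigr)\,d\tau ,
\]
where $h_w$ corrects the final state defect $\tilde u_1-(\text{free part}+\text{nonlinear part at } T)$ via $\mathcal{L}$. Strichartz estimates in the exterior of non-trapping obstacles (BGT, Blair--Smith--Sogge, Ivanovici) make $w\mapsto|w|^2w$ locally Lipschitz in $C([0,T];H^s_D)\cap L^p(0,T;W^{s,q})$ for $n\leq3$. For $s=2$ one also needs $|w|^2w\in H^1_0$ and an estimate on $\partial_t w$, since $\Delta_D$ regularity requires compatibility. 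For small $\delta$, $\Gamma$ is a contraction on a small ball.

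I expect the main obstacle to be step (i) in the full scale $\sigma\in[-2,2]$ without boundary normal traces. In particular, getting a clean $H^{s}$-level control (not just $L^2$) compatible with the Dirichlet condition on $\partial\Theta$, and handling the commutator $[\Delta,\varphi]$ in the annulus, are the delicate points. The compactness--uniqueness step on an unbounded domain also requires care: compactness comes only from local smoothing, and that locality is what makes it work.
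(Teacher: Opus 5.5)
Your reduction is the paper's. Any bounded extension of $H^s(\Omega_0)$, restricted to $\Omega$, already lies in $H^s(\Omega)\cap H^1_0(\Omega)$; the choice $R=R_0+1$ makes $\varphi$ vanish on $\overline{\Omega_0}$; and the control is the trace on $\Gamma_1$ of the extended solution. Your plan for the internal problem also matches the paper: local smoothing with Aubin--Lions compactness, compactness--uniqueness closed by the absence of eigenfunctions of $\Delta_D$ vanishing in $\omega$, HUM, and a fixed point. Two steps, however, would not go through as you describe them.

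\emph{Observability at negative levels.} HUM with controls in $H^1_0$ and $H^2_D$ needs \eqref{OI_new} for $\sigma=-1,-2$, and this cannot be obtained by first proving the $L^2$ inequality and then transferring it down the scale. Upwards the transfer does work; this is how the paper gets $\sigma=2$ from $\sigma=0$ in Proposition \ref{obs}, since the commutator only produces $\|v_0\|_{H^1_D}$, which is interpolated and absorbed. Downwards it fails. With $y=\mathcal Rv$, the identity $\varphi\mathcal R-\mathcal R\varphi=-\mathcal R[\Delta,\varphi]\mathcal R$ produces a remainder such as $\int_0^T\|\psi y\|^2_{L^2}\,dt$, which is localized and at best of lower order. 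Absorbing it by interpolation would require observability at a level below $\sigma$, and going downwards no such level is available. The paper instead keeps this term in the remainder $Q_\sigma$ of \eqref{Qsigma} and proves its compactness (Lemma \ref{compact}). It then reruns the compactness--uniqueness argument at the level $\sigma$ itself (Lemma \ref{relaxed}), including the finite dimensionality and $\Delta_D$-invariance of $\mathcal N^\sigma_T$ (Lemma \ref{UCP}).

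\emph{The fixed-point space.} For $s=1$, your space $C([0,T];H^1_0)\cap L^p(0,T;W^{1,q})$ requires a Strichartz estimate from $H^1_0(\Omega)$ into $L^pW^{1,q}$ with no loss of derivatives. The estimates available for a general non-trapping obstacle do not give this: \eqref{main_1} loses $1/p$ derivative, and lossless estimates are known only under extra assumptions such as strict convexity. What the contraction actually needs is an $L^p_tL^\infty_x$ bound from $H^1_0$ data, since $\|g(v)\|_{H^1_0}\le C\|v\|^{m-1}_{L^\infty}\|v\|_{H^1_0}$. In dimension two this follows from \eqref{main_1} with $s'=1-1/p$ and Sobolev embedding. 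In dimension three the lossy estimate never reaches $L^\infty$, because $s'q=6(p-1)/(3p-4)<3$ for every $p>2$. There you must use the scale-invariant bound \eqref{L4Linf3} of Blair--Smith--Sogge, with $X_T=C([0,T];H^1_0)\cap L^4(0,T;L^\infty)$ as in Proposition \ref{H1-Control}. For $s=2$, neither a Strichartz estimate nor a bound on $\partial_t w$ is needed. $H^2(\Omega)$ is an algebra contained in $L^\infty$ for $n\le 3$, and $|w|^2w$ vanishes on $\partial\Omega$, so $X_T=C([0,T];H^2_D)$ together with unitarity of $e^{it\Delta_D}$ suffices (Proposition \ref{H2-Control}).
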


\begin{remark}\label{rem_principal}
The compatibility condition $u_0=u_1=0$ on $\Gamma_2$ is necessary, since no control acts on $\Gamma_2$. Similarly, since $u\in C([0,T];H^s(\Omega_0))$ and $h=u|_{\Gamma_1}$, the control necessarily satisfies $h(0)=u_0|_{\Gamma_1}$ and $h(T)=u_1|_{\Gamma_1}$; the control constructed in the proof (the trace on $\Gamma_1$ of the solution of the extended problem) satisfies these identities. The time $T>0$ is arbitrary, and no star-shapedness of $\Theta$ is required.
\end{remark}

Theorem \ref{principal} is obtained, following \cite{RoZhaJDE}, by extending the data to the exterior domain $\Omega$ and solving an internal control problem there, with a control supported far away from $\Omega_0$. Let $R>0$ with $\Theta\subset B_R(0)$, let $\varphi\in C^\infty(\mathbb{R}^n;[0,1])$ be a cut-off with $\varphi=0$ on $B_R(0)$, $\varphi>0$ for $|x|>R$ and $\varphi=1$ for $|x|\geq R+1$ (see \eqref{phi}), and set $\omega:=\{x\in\Omega:|x|>R\}$. We consider
\begin{equation}\left\{\begin{array}[c]{lll}
iv_t+\Delta v+\lambda|v|^2v=\varphi(x)h(x,t) &  & \text{in }\Omega\times(0,T),\\
v(x,t)=0 && \text{on }\partial\Omega\times(0,T),\\
v(x,0)=v_0(x)   && \text{in }\Omega.
\end{array}
\right.
\label{s1a}
\end{equation}
Here and below, $H^s_D(\Omega)$ denotes the Sobolev scale associated with the Dirichlet Laplacian (see Subsection \ref{sec:linear} for the precise definition); in particular, $H^1_D(\Omega)=H^1_0(\Omega)$ and $H^2_D(\Omega)=H^2(\Omega)\cap H^1_0(\Omega)$. If $u_0,u_1\in X^s(\Omega_0)$ are extended to $v_0,v_T\in H^s_D(\Omega)$ (Lemma \ref{extension}) and $h$ steers the solution of \eqref{s1a} from $v_0$ to $v_T$, then, choosing $R=R_0+1$, the restriction of $v$ to $\Omega_0$ solves \eqref{s1}--\eqref{s2} with the control $v|_{\Gamma_1}$. The core of the paper is therefore the internal controllability of \eqref{s1a}, and first of its linear part
\begin{equation}
\left\{\begin{array}[c]{lll}
iz_t+\Delta z=\varphi(x)h(x,t)&  & \text{in }\Omega\times(0,T),\\
z(x,t)=0& &\text{on }\partial\Omega\times(0,T),\\
z(x,0)=z_0(x)&  &\text{in }\Omega.
\end{array}
\right.  \label{int_lin}
\end{equation}
The following results hold.

\begin{theorem}[Linear controllability]
\label{lin_th}
Assume that $\Theta$ is non-trapping and that $\Omega$ is connected. Let $T>0$ and $s\in[-2,2]$. For any $z_0,z_T\in H^s_D(\Omega)$, one can find a control $h\in C([0,T];H^s_D(\Omega))$ such that the solution $z\in C([0,T];H^s_D(\Omega))$ of \eqref{int_lin} satisfies $z(T)=z_T$.
\end{theorem}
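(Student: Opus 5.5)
The plan is to use the Hilbert Uniqueness Method. By linearity and time reversibility of \eqref{int_lin}, it suffices to treat the case $z_0\in H^s_D(\Omega)$, $z_T=0$ (null controllability). This in turn reduces to an observability inequality for the adjoint free Schr\"odinger group $e^{it\Delta_D}$ on $\Omega$. The inequality to prove is
\begin{equation*}
\|w_0\|_{H^{-s}_D(\Omega)}^2\leq C\int_0^T\|\varphi\, e^{it\Delta_D}w_0\|_{H^{-s}_D(\Omega)}^2\,dt .
\end{equation*}
Duality is taken with respect to the $L^2$ pairing, and the weights are modified suitably so that the control lands in $C([0,T];H^s_D)$. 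Concretely, I would take the control $h=\varphi\,(1-\Delta_D)^{s}\,e^{it\Delta_D}w_0$, with $w_0$ the minimizer of the usual HUM functional. Since $\varphi$ is smooth with bounded derivatives and vanishes near $\partial\Omega$, multiplication by $\varphi$ preserves every $H^\sigma_D$, $\sigma\in[-2,2]$. The Gramian is then a bounded, self-adjoint isomorphism, and continuity in time of $h$ follows from the strong continuity of the group.

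For the observability inequality I would first establish the case $s=0$, i.e.\ observability in $L^2$, by the Bortot--Cavalcanti compactness scheme. Let $\chi$ be a cut-off equal to $1$ on $B_{R+1}$. The local smoothing effect of Burq--G\'erard--Tzvetkov (the estimate \eqref{l12}, which follows from the non-trapping resolvent bound \eqref{l9}) gives
\[
\int_0^T\|\chi e^{it\Delta_D}w_0\|_{H^{1/2}}^2\,dt\leq C\|w_0\|_{L^2}^2 .
\]
Conservation of the $L^2$ norm and a decomposition $w=\chi w+(1-\chi)w$ give
\[
T\|w_0\|^2_{L^2}\leq \int_0^T\|(1-\chi) w\|^2+\int_0^T\|\chi w\|^2 .
\]
The first term is bounded by the observed quantity, since $1-\chi\leq C\varphi$. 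The second is localized in a compact set where the smoothing gives a half-derivative gain, hence a compact term. Alternatively, I would use the resolvent estimate more directly in the spirit of Miller/Burq--Zworski: the estimate $\|(\Delta_D-\tau)^{-1}\chi\|\lesssim \langle\tau\rangle^{-1/2}$ together with GCC yields a high-frequency observability with the compact error $\|w_0\|_{H^{-1}_D}$. A standard uniqueness--compactness argument then removes the error. The invisible set $N_T=\{w_0:\varphi w\equiv0\}$ is finite dimensional and invariant under $\Delta_D$, so it contains an eigenfunction $\Delta_D\psi=\mu\psi$ vanishing on $\omega$. By unique continuation for the elliptic equation on the connected set $\Omega$, this forces $\psi=0$. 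Note that $\Delta_D$ has no $L^2$ eigenvalues in any case.

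Passage to the whole scale $\sigma\in[-2,2]$ will then follow by commuting with $(1-\Delta_D)^{\sigma/2}$. Since $\Delta_D$ commutes with the group, I would apply the $L^2$ inequality to $(1-\Delta_D)^{-\sigma/2}w_0$. This reduces matters to controlling the commutator $[\varphi,(1-\Delta_D)^{-\sigma/2}]$. For integer $\sigma=\pm1,\pm2$ the commutator with $\Delta$ is an explicit first-order operator supported in the annulus $\{R<|x|<R+1\}$, hence in a compact region. Its contribution is therefore absorbed by local smoothing again, up to a compact error handled as above. Intermediate values of $\sigma$ come by interpolation, applied either to the control map itself or to the observability estimates via the Gramian operator. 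Negative $\sigma$ follow by duality.

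I expect the main obstacle to be the $L^2$ observability step in the unbounded domain. Compactness of the remainder there relies crucially on the localization supplied by $\chi$ together with the half-derivative gain from \eqref{l12}; without both, Rellich compactness fails on $\Omega$. The uniqueness step must also be made rigorous in this non-compact setting, including the argument that $N_T$ is finite dimensional.
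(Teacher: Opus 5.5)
\textbf{Gap: the passage from the levels $\sigma\in\{-2,0,2\}$ to the rest of the scale does not work as written.}

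Your $L^2$ step and the integer endpoints $\sigma=\pm2$ are essentially what the paper does. That is: the splitting $v=\varphi v+\chi v$, local smoothing for the localized remainder, the integer identity $\varphi\mathcal R-\mathcal R\varphi=-\mathcal R[\Delta,\varphi]\mathcal R$, and a finite-dimensional, $\Delta_D$-invariant invisible set. One caveat: for compactness of $\int_0^T\|\chi v\|^2_{L^2}dt$ you also need the bound on $\partial_t(\psi v)$ in $L^\infty(0,T;H^{-2}(\Omega'))$ to apply Aubin--Lions--Simon. The spatial half-derivative gain alone is not enough.

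There are three problems with extending this to all $\sigma\in[-2,2]$.
\begin{itemize}
\item For $\sigma=\pm1$ you must commute $\varphi$ with $(1-\Delta_D)^{\pm1/2}$, a fractional power of the Dirichlet Laplacian. That commutator is not the explicit first-order operator $[\Delta,\varphi]$, so your argument does not cover $\sigma=\pm1$. Note that $s=1$ is precisely the level needed later for control in $H^1_0$.
\item ``Intermediate values by interpolation'' is not justified. An observability inequality $\|v_0\|_{H^\sigma_D}\le C\|\varphi v\|_{L^2(0,T;H^\sigma_D)}$ is a lower bound, and lower bounds (equivalently, surjectivity of the dual control map) are not preserved by complex interpolation in general. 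One needs, e.g., a common left (resp.\ right) inverse. The HUM Gramians and control operators at levels $0$ and $\pm2$ are different operators, so there is nothing to interpolate. To use this route you would first have to prove that one fixed HUM operator, e.g.\ the $L^2$ one, also preserves $H^2_D(\Omega)$, that is, that the $L^2$ Gramian is an isomorphism of $H^2_D(\Omega)$. That is a nontrivial regularity result your proposal does not contain.
\item If ``negative $\sigma$ follow by duality'' means observability at $-\sigma$ follows from observability at $\sigma$, that is not available. Duality turns observability at level $\sigma$ into controllability at level $-\sigma$. Controllability for $s\in[0,2]$ therefore needs observability for $\sigma=-s\in[-2,0]$, proved directly.
\end{itemize}

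\textbf{How the paper avoids this.} It uses no fractional commutators and interpolates only upper bounds. It proves the relaxed inequality directly at every $\tau\in[0,2)$, as follows.
\begin{itemize}
\item Write $\|v\|_{H^\tau_D}\le\|\varphi v\|_{H^\tau_D}+\|\chi v\|_{H^\tau_D}$; multiplication by the cut-offs is bounded on $H^\tau_D$.
\item Bound $\int_0^T\|\chi v\|^2_{H^\tau_D}dt$ by $C\|v_0\|^2_{H^\theta_D}$ with $\theta<\tau$, using local smoothing, interpolated between $s=1$ and unitarity on $H^2_D$ when $\tau\in(\frac32,2)$.
\item Absorb via $\|v_0\|^2_{H^\theta_D}\le\varepsilon\|v_0\|^2_{H^\tau_D}+C_\varepsilon\|v_0\|^2_{L^2}$.
\item For $\sigma=\tau-2<0$, apply this to $\mathcal Rv_0$ and use only the integer commutator. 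The price is the weaker remainder $\int_0^T\|\psi\mathcal Rv\|^2_{H^{\tau-1}_D}dt$ when $\tau\in(1,2)$.
\item Run compactness--uniqueness at each level. The case $\sigma=2$ is then deduced from $\sigma=0$.
\end{itemize}

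\textbf{A smaller error.} Your concrete control $h=\varphi(1-\Delta_D)^{s}e^{it\Delta_D}w_0$ has both the wrong exponent and the wrong order. It does not lie in $H^s_D$, and with $\varphi$ outside the Gramian is not the observed quantity. The correct choice is $h=-i(1-\Delta_D)^{-s}(\varphi v)$, so that the Gramian equals $\int_0^T\|\varphi v\|^2_{H^{-s}_D}dt$.
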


\begin{theorem}[Internal controllability around zero]
\label{nlin_th} Assume that $\Theta$ is non-trapping and that $\Omega$ is connected. Let $T>0$, $n\in\{2,3\}$ and $s\in\{1,2\}$. Then there exists $\delta>0$ such that for any $v_0,v_T\in H^s_D(\Omega)$ satisfying $\|v_0\|_{H^s_D(\Omega)}\leq\delta$ and $\|v_T\|_{H^s_D(\Omega)}\leq\delta$, one can find a control $h\in C([0,T];H^s_D(\Omega))$ such that \eqref{s1a} admits a solution $v\in C([0,T];H^s_D(\Omega))$ with $v(0)=v_0$ and $v(T)=v_T$.
\end{theorem}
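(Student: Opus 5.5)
The approach is a perturbation argument around the linear control problem: a linear control operator from Theorem \ref{lin_th}, a Duhamel formula for the cubic term, and a contraction mapping.

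\textbf{Step 1: linear control operator.} From Theorem \ref{lin_th} (with $s\in\{1,2\}$) and its HUM proof, we extract a bounded linear operator
\[
\Phi: H^s_D(\Omega)\times H^s_D(\Omega)\to C([0,T];H^s_D(\Omega)),\qquad (z_0,z_T)\mapsto h .
\]
It is built from the Gramian $\mathcal{G}=\int_0^T S(-t)\varphi^2 S(t)\,dt$, where $S(t)=e^{it\Delta_D}$. The Gramian is invertible on $H^s_D$ by the observability inequality. One then sets $h(t)=\varphi S(t)\mathcal{G}^{-1}(\cdots)$. Since $\varphi$ is smooth and vanishes near $\partial\Omega$, multiplication by $\varphi$ preserves $H^s_D$. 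The solution of \eqref{int_lin} with this $h$ goes from $z_0$ to $z_T$.

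\textbf{Step 2: fixed point.} Write the solution of \eqref{s1a} by Duhamel:
\[
v(t)=S(t)v_0-i\int_0^tS(t-\tau)\bigl(\varphi h-\lambda|v|^2v\bigr)(\tau)\,d\tau .
\]
Let $w(v)(T)$ denote the Duhamel term of the nonlinearity alone, starting from $0$. Choose
\[
h=\Phi\bigl(v_0,\,v_T-w(v)(T)\bigr).
\]
With this choice $v(T)=v_T$ automatically, so it suffices to find a fixed point of
\[
\Gamma(v)=S(t)v_0-i\int_0^tS(t-\tau)\varphi h(v)\,d\tau+i\lambda\int_0^tS(t-\tau)|v|^2v\,d\tau
\]
in a small ball of a space $Y_T$.

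\textbf{Step 3: choice of $Y_T$.} For $s=2$, $n\le3$, take $Y_T=C([0,T];H^2_D(\Omega))$. Here $H^2$ is an algebra, and $|v|^2v$ stays in $H^2_D$ because it vanishes on $\partial\Omega$ when $v$ does. The Duhamel integral maps $L^1(0,T;H^2_D)$ into $C([0,T];H^2_D)$, which gives
\[
\|\Gamma(v)\|\le C\bigl(\|v_0\|+\|v_T\|\bigr)+C\|v\|^3,
\]
together with the matching Lipschitz estimate. For small $\delta$ the map is a contraction on a ball of radius $2C\delta$.

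For $s=1$, $H^1$ is not an algebra in dimensions $2,3$. I would instead take
\[
Y_T=C([0,T];H^1_0(\Omega))\cap L^{q}(0,T;W^{1,r}(\Omega)),
\]
using the Strichartz estimates of \cite{BGT1} for non-trapping obstacles. These are at least with a $1/q$ derivative loss; scale-invariant versions in a restricted range follow from \cite{BSS}. The cubic term is then estimated as in the Cauchy theory of \cite{BGT1}:
\[
\bigl\||v|^2v\bigr\|_{L^1H^1}\lesssim T^{\theta}\,\|v\|_{Y_T}^3 .
\]
Two further points need checking. The control term $\varphi h$ lies in $C([0,T];H^1_0)\subset L^1H^1$, so its Duhamel contribution belongs to $Y_T$ by the inhomogeneous Strichartz estimate. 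Also, $w(v)(T)\in H^1_0$ with norm controlled by $\|v\|_{Y_T}^3$, which makes $\Phi$ applicable.

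\textbf{Main obstacle.} The hard step is the $s=1$ case. The difficulty is closing the cubic estimate in $C(H^1_0)$ in dimension $3$ with the lossy Strichartz estimates available on exterior domains. Derivatives also have to commute with $\Delta_D$, i.e.\ we need the equivalence $\|(-\Delta_D)^{1/2}f\|_{L^r}\sim\|f\|_{W^{1,r}}$. I would first try the \cite{BGT1} estimate $L^{2}(0,T;W^{1/2+\varepsilon,\,\infty})$-type control in dimension $2$, and the $3$D exponent from \cite{BSS}, exactly as in their well-posedness proofs. Once the nonlinear Duhamel map is known to be Lipschitz into $C(H^1_0)$ for small data, the rest of the fixed-point argument is identical to the $s=2$ case.
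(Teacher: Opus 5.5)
Your overall scheme matches the paper's Theorem~\ref{control_n}: the linear operator $G$ from Theorem~\ref{exact_lin}, the choice $h_v=G(v_0,v_T-N(v)(T))$, and a contraction. Your $s=2$ case is exactly Proposition~\ref{H2-Control}.

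The gap is the case $s=1$. You do not close it, and the space $Y_T=C([0,T];H^1_0)\cap L^q(0,T;W^{1,r})$ you propose is not controlled by the available estimates.
\begin{itemize}
\item The global Strichartz estimates \eqref{main_1} of Burq--G\'erard--Tzvetkov lose $1/q$ derivative. So $H^1_0$ data only give $L^q(0,T;W^{1-1/q,r})$; a full derivative in $L^r$ would need data in $H^{1+1/q}_D$.
\item The Blair--Smith--Sogge estimates are $L^pL^q$ bounds without derivatives. Upgrading them to $W^{1,r}$ needs an $L^r$ comparison between $(1-\Delta_D)^{1/2}$ and $\nabla$ on $\Omega$, which you flag but do not establish.
\item Independently, your bound $\||v|^2v\|_{L^1H^1}\lesssim T^\theta\|v\|^3_{Y_T}$ actually requires $v\in L^2(0,T;L^\infty)$. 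In dimension three, \eqref{main_1} plus Sobolev embedding cannot give such a bound from $H^1_0$ data. In the notation of \eqref{main_1}, take the best choice $s'=1-\frac1p$ with $\frac2p+\frac3q=\frac32$. Then $s'q=\frac{6(p-1)}{3p-4}$, which exceeds $3$ only for $p<2$, whereas $p>2$ is required.
\end{itemize}

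The missing idea is that the auxiliary norm needs no derivative at all. One has $\|\lambda|v|^2v\|_{H^1_0}\le 3|\lambda|\,\|v\|^2_{L^\infty}\|v\|_{H^1_0}$, and $|v|^2v$ vanishes on $\partial\Omega$. So the paper takes $X_T=C([0,T];H^1_0(\Omega))\cap L^4(0,T;L^\infty(\Omega))$ (Proposition~\ref{H1-Control}). H\"older in time then gives $\|\lambda|v|^2v\|_{L^1(0,T;H^1_0)}\le 3|\lambda|T^{1/2}\|v\|^3_{X_T}$, and the same for differences.

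The only linear input is Lemma~\ref{L4Linfty}, namely $\|e^{it\Delta_D}u_0\|_{L^4(0,T;L^\infty)}\lesssim\|u_0\|_{H^1_0}$. In dimension three this is exactly \eqref{L4Linf3}. In dimension two it follows from \eqref{main_1} with $s'=1-\frac1p$ and $q=\frac{2p}{p-2}$: then $s'q=\frac{2(p-1)}{p-2}>2$, so $W^{s',q}(\Omega)\hookrightarrow L^\infty(\Omega)$.

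Both the control and the nonlinearity enter as forcing terms in $L^1(0,T;H^1_0)$. Their Duhamel integrals are therefore bounded in $X_T$ by Minkowski's inequality and the homogeneous estimate on each $[r,T]$. Neither inhomogeneous Strichartz estimates with dual exponents nor Riesz-transform equivalences on $\Omega$ are needed.

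A minor point in Step~1: at level $s$ the operator to invert is $\int_0^T S(-t)\varphi(1-\Delta_D)^{-s}\varphi S(t)\,dt\colon H^{-s}_D\to H^s_D$, which is coercive by Proposition~\ref{obs} with $\sigma=-s$. Invertibility of your $\varphi^2$-Gramian on $H^s_D$ does not follow directly from observability, but Theorem~\ref{exact_lin} already supplies the bounded operator you need.
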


We also consider controllability around a nontrivial trajectory. Let $T>0$ and let $w$ satisfy

\noindent
\begin{equation}\tag{W}
\boxed{
  \begin{aligned}
    &w\in C([0,T];H^2_D(\Omega)\cap W^{2,\infty}(\Omega)) \text{ solves} \\
    &\begin{cases} 
      iw_t+\Delta w+\lambda|w|^2w=0 & \text{in } \Omega\times(0,T),\\ 
      w=0 & \text{on }\partial\Omega\times(0,T).
    \end{cases}
  \end{aligned}
}
\end{equation}
Setting $V_1:=2\lambda|w|^2$, $V_2:=\lambda w^2$ and writing $v=w+z$, the linear part of the equation for $z$ is
\begin{equation}\label{linearized_int}
\begin{cases}
iz_t+\Delta z+V_1(x,t)z+V_2(x,t)\overline{z}=\varphi(x)h(x,t) &\text{in }\Omega\times(0,T),\\
z(x,t)=0 &\text{on }\partial\Omega\times(0,T),\\
z(x,0)=z_0(x) &\text{in }\Omega,
\end{cases}
\end{equation}
whose adjoint system, for the real duality $\operatorname{Re}\int_\Omega z\overline{v}\,dx$, is
\begin{equation}\label{adj_w}
\begin{cases}
iv_t+\Delta v+V_1v-V_2\overline v=0,& \ \text{ in }\Omega\times(0,T),\\  v=0,&\ \text{ on }\partial\Omega\times(0,T).
\end{cases}
\end{equation}
As in \cite{Laurent2,CaCoCa}, we assume the following unique continuation property:

\begin{equation}\tag{UCP$_w$}
\boxed{\text{if } v\in C([0,T];H^1_0(\Omega)) \text{ solves } \eqref{adj_w} \text{ and } v=0 \text{ in } \omega\times(0,T), \text{ then } v\equiv0.}
\end{equation}
For $w\equiv0$, \textup{(UCP$_w$)} holds under the sole non-trapping assumption (Lemma \ref{UCP}). With this hypothesis at hand, our results around nontrivial trajectories read as follows.
\begin{theorem}[Linearized controllability around $w$]\label{lin_ar}
Assume that $\Theta$ is non-trapping, that $\Omega$ is connected, and that \textup{(W)} and \textup{(UCP$_w$)} hold. Let $s\in\{1,2\}$. For any $z_0,z_T\in H^s_D(\Omega)$ one can find a control $h\in C([0,T];H^s_D(\Omega))$ such that the solution $z\in C([0,T];H^s_D(\Omega))$ of \eqref{linearized_int} satisfies $z(T)=z_T$.
\end{theorem}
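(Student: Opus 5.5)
By HUM duality and a compactness--uniqueness argument, Theorem \ref{lin_ar} reduces to the observability inequality for the adjoint system \eqref{adj_w}: in the dual space $H^{-s}_D(\Omega)$ (duality with pivot $L^2$ and real inner product),
\begin{equation*}
\|v(0)\|_{H^{-s}_D(\Omega)}^2\le C\int_0^T\|\varphi^{1/2} v(t)\|_{H^{-s}_D(\Omega)}^2\,dt ,
\end{equation*}
or, equivalently, an observability inequality in $H^s_D$ applied to $(-\Delta_D)^{-s}v$. For the free equation ($w\equiv0$) this inequality holds for every $\sigma\in[-2,2]$, as in the proof of Theorem \ref{lin_th}. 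Given it, I would define the Gramian operator $\Lambda v_0=\Phi(T)^{-1}z(T)$, where $z$ solves \eqref{linearized_int} from $z_0=0$ with control $h=\varphi\,(-\Delta_D)^{-s}v$ and $\Phi$ denotes the propagator. I would show that $\Lambda$ is coercive, hence an isomorphism $H^{-s}_D\to H^s_D$ by Lax--Milgram. The control $h$ obtained this way has the regularity $C([0,T];H^s_D)$ because $\varphi$ acts continuously on $H^s_D$ for $s\in\{1,2\}$, since $\varphi$ vanishes near $\partial\Omega$. The bulk of the proof is therefore the observability inequality for \eqref{adj_w}.

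\textbf{Step 1 (well-posedness).} Under (W), $V_1,V_2\in C([0,T];W^{2,\infty})$, so multiplication by $V_j$, composed with conjugation, is bounded on $H^\sigma_D$ for $\sigma\in[-2,2]$; here one uses $V_j=0$ on $\partial\Omega$ when $\sigma=2$, and duality for negative $\sigma$. A Duhamel argument then gives $C([0,T];H^\sigma_D)$ solutions of \eqref{linearized_int} and \eqref{adj_w}.

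\textbf{Step 2 (weak observability).} Write $v=v_{\rm free}+\int_0^t e^{i(t-\tau)\Delta}(\dots)\,d\tau$ and apply the free observability inequality to $v_{\rm free}$. This gives
\[
\|v(0)\|_{H^{\sigma}}\le C\|\varphi^{1/2}v\|_{L^2(0,T;H^\sigma)}+C\|V_1v-V_2\bar v\|_{L^1(0,T;H^\sigma)}.
\]
The last term is not compact. I would instead control it through the local smoothing effect \eqref{l12} and a cut-off decomposition. The ball region, where the $V_j$ need not be small, is handled by local smoothing, which gains $1/2$ derivative locally. Far away, the potentials are not assumed small, so I would rather aim for the standard form
\[
\|v(0)\|_{H^\sigma}\le C\big(\|\varphi^{1/2}v\|_{L^2H^\sigma}+\|v(0)\|_{H^{\sigma-1/2}_{\rm loc}}\big).
\]
Concretely, I would run the propagation/multiplier argument of the free case directly for \eqref{adj_w}: the potential terms contribute only lower-order quantities $\|v\|_{L^2H^{\sigma-1}}$, and these are compact with respect to the $H^\sigma$ energy over a bounded time interval. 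I expect this step to be the main obstacle, because $\Omega$ is unbounded and $H^\sigma\hookrightarrow H^{\sigma-1}$ is not compact. To handle it, I would split $v$ with a cut-off $\chi$ equal to $1$ on $B_{R+1}$. The part $(1-\chi)v$ is directly observed, since $\varphi=1$ there. The part $\chi v$ enjoys compactness thanks to local smoothing.

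\textbf{Step 3 (compactness--uniqueness).} If observability failed, there would be a sequence $v_k$ with $\|v_k(0)\|=1$ and $\|\varphi^{1/2}v_k\|\to0$. By Step 2 and the local compactness, after extraction $v_k\to v$ strongly in the lower norm, with $v\neq0$ by the weak inequality, while $v=0$ in $\omega\times(0,T)$. To apply (UCP$_w$), which is stated in $H^1_0$, I would first show $v\in C([0,T];H^1_0)$ when $\sigma<1$. For this I would use the standard argument that the space of such invisible solutions is finite-dimensional and stable under $\partial_t$ (or under difference quotients in $t$), and that $V_j\in C^1$ in time, which follows from (W) and the equation. Then (UCP$_w$) gives $v\equiv0$, a contradiction.
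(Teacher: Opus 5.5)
Your overall architecture is the one the paper uses: HUM with the real duality $\operatorname{Re}\langle\cdot,\cdot\rangle$, a relaxed observability inequality whose remainder is made local and compact by local smoothing, compactness--uniqueness, and an upgrade of the weak limit to $C([0,T];H^1_0(\Omega))$ before invoking \textup{(UCP$_w$)}. The gap is in the upgrade step, which is the crux.

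\textbf{Why your upgrade argument fails.} Finite dimensionality plus stability under $\partial_t$ (or under time translations or difference quotients) is the autonomous argument of Lemma~\ref{UCP}. It relies on $e^{i\varepsilon\Delta_D}$ mapping invisible data to invisible data. For \eqref{adj_w} the potentials depend on $t$. If $v$ is invisible, then $v_t$ (or $\varepsilon^{-1}(v(\cdot+\varepsilon)-v)$) is still invisible, but it solves \eqref{adj_w} with the extra source $-(\partial_tV_1)v+(\partial_tV_2)\overline v$. So it does not lie in the finite-dimensional space, and that space tells you nothing about it. To make the difference-quotient route work, you would need a relaxed observability estimate for the inhomogeneous problem. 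You would also need uniform bounds on this source in $H^\sigma_D$, $\sigma\in\{-1,-2\}$, i.e.\ $\partial_tV_j$ would have to be a bounded multiplier on $H^1_0$ and on $H^2_D$. Hypothesis \textup{(W)} only gives $w_t=i(\Delta w+\lambda|w|^2w)\in C([0,T];L^\infty(\Omega))$, hence $\partial_tV_j\in C([0,T];L^\infty(\Omega))$. Such a function acts on $L^2$ but need not act boundedly on $H^{-1}_D$ or $H^{-2}_D$. The paper notes this obstruction in the remark following Lemma~\ref{UCP}.

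\textbf{The missing idea} is Lemma~\ref{regularity_w}.
\begin{itemize}
\item If $\varphi v=0$ on $[0,T]$, then $v=\psi v$ is compactly supported. For $\vartheta\in C^\infty_0(0,T)$, the function $u=\vartheta v$ satisfies $u(0)=0$ and $iu_t+\Delta_Du=\Phi$, with the compactly supported source $\Phi=\vartheta(-V_1v+V_2\overline v)+i\vartheta'v$.
\item The inhomogeneous local smoothing estimate gains a full derivative for such sources. The paper extends it to $s\in[-2,-1]$ by taking adjoints of the cut-off resolvent bounds (Lemma~\ref{smoothing_neg}).
\item Hence $v\in C([0,T];H^\theta_D)$ gives $u\in L^2(0,T;H^{\theta+1}_D)$. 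So $v(t_0)\in H^{\theta+1}_D$ for some $t_0$ with $\vartheta(t_0)\neq0$, and well-posedness of \eqref{adj_w} gives $v\in C([0,T];H^{\theta+1}_D)$.
\item At most three iterations reach $H^1_0$, and no time regularity of $V_j$ is used.
\end{itemize}

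\textbf{Smaller points.}
\begin{itemize}
\item On an exterior domain $0$ lies in the spectrum of $-\Delta_D$, so you must use $(1-\Delta_D)^{-s}$, not $(-\Delta_D)^{-s}$.
\item The smoothing operator must act after the cut-off: $h=-i(1-\Delta_D)^{-s}(\varphi v)$, so that the Gramian form equals $\int_0^T\|\varphi v\|^2_{H^{-s}_D}dt$. With $h=\varphi(-\Delta_D)^{-s}v$ the form has no sign, and $\varphi^{1/2}$ need not be smooth.
\item In Step~2 you do not need compactness of the non-compactly supported potential terms. The potentials enter only through the Duhamel term in the local smoothing bound for $\chi\mathcal Rv$ (Lemmas~\ref{smooth_w}--\ref{relaxed_w}).
\end{itemize}
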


\begin{theorem}[Internal controllability around $w$]\label{nlin_ar}
Assume that $\Theta$ is non-trapping, that $\Omega$ is connected, and that \textup{(W)} and \textup{(UCP$_w$)} hold. Let $n\in\{2,3\}$ and $s\in\{1,2\}$. Then there exists $\delta>0$ such that for any $v_0,v_T\in H^s_D(\Omega)$ satisfying
$$
\|v_0-w(0)\|_{H^s_D(\Omega)}\leq\delta\quad\text{and}\quad\|v_T-w(T)\|_{H^s_D(\Omega)}\leq\delta,
$$
one can find a control $h\in C([0,T];H^s_D(\Omega))$ such that \eqref{s1a} admits a solution $v\in C([0,T];H^s_D(\Omega))$ with
\begin{equation}\label{1.5}
v(x,0)=v_0(x), \qquad v(x,T)=v_T(x)\quad\text{in }\Omega.
\end{equation}
\end{theorem}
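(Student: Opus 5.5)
Writing $v=w+z$, the statement reduces to steering $z$ from $z_0:=v_0-w(0)$ to $z_T:=v_T-w(T)$, both of size at most $\delta$ in $H^s_D(\Omega)$, for
\begin{equation*}
iz_t+\Delta z+V_1z+V_2\overline z+N(w,z)=\varphi h,\qquad N(w,z):=\lambda\bigl(|w+z|^2(w+z)-|w|^2w-2|w|^2z-w^2\overline z\bigr).
\end{equation*}
Here $N(w,z)$ is at least quadratic in $z$: it is a sum of terms $\overline w z^2$, $2w|z|^2$ and $|z|^2z$. I will build the control by the standard perturbation scheme of \cite{RoZhaJDE}, with the linearized problem \eqref{linearized_int} as the linear part.

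By Theorem \ref{lin_ar}, the map $h\mapsto z_h(T)$, where $z_h$ solves \eqref{linearized_int} with $z_0=0$, is onto $H^s_D(\Omega)$. This is a bounded real-linear map from a Hilbert space of controls, for instance $L^2(0,T;H^s_D(\Omega))$ cut off by $\varphi$. I will therefore take a bounded right inverse $\mathcal{G}$, given by the HUM operator built from the observability inequality for \eqref{adj_w} at level $H^{-s}_D$. That inequality is where (UCP$_w$) enters, through the compactness--uniqueness argument, and it is what gives $h\in C([0,T];H^s_D(\Omega))$. Let $S(t,\tau)$ denote the propagator of \eqref{linearized_int} with $h=0$. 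Given $z$ in a ball of a solution space $Y_T$, I will set
\begin{equation*}
h=\mathcal{G}\Bigl(z_T-S(T,0)z_0+i\int_0^T S(T,\tau)N(w,z)(\tau)\,d\tau\Bigr),
\end{equation*}
and define $\Gamma(z)$ as the solution of the linearized equation with source $\varphi h-N(w,z)$ and initial datum $z_0$. Any fixed point of $\Gamma$ satisfies $z(0)=z_0$ and $z(T)=z_T$.

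To close the contraction I need linear estimates for \eqref{linearized_int} in $Y_T=C([0,T];H^s_D(\Omega))\cap L^{p}(0,T;W^{s,q}_D)$, with an admissible Strichartz pair for $n\in\{2,3\}$. I will take these from the Strichartz-type estimates of Burq--G\'erard--Tzvetkov type used for Theorem \ref{nlin_th}; for $s=1$ the pair should be chosen so that $L^\infty_x$ control comes from Sobolev embedding or the $1/p$-loss Strichartz estimates. The potentials $V_1,V_2\in L^\infty(0,T;W^{2,\infty})$ are handled as bounded perturbations, via Gronwall on the Duhamel formula on $[0,T]$. The key nonlinear bound is
\begin{equation*}
\Bigl\|\int_0^t S(t,\tau)N(w,z)\,d\tau\Bigr\|_{Y_T}\le C\bigl(\|z\|_{Y_T}^2+\|z\|_{Y_T}^3\bigr),
\end{equation*}
and the analogous Lipschitz bound for differences, with constants depending on $\|w\|_{L^\infty(H^2\cap W^{2,\infty})}$. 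For $s=2$ this is immediate, since $H^2$ is an algebra in dimensions $2,3$, and $w\in W^{2,\infty}$ makes the products with $w$ harmless. For $s=1$ it follows exactly as in the proof of Theorem \ref{nlin_th}: the cubic term is estimated with Strichartz and H\"older, and the terms involving $w$ are easier because $w,\nabla w\in L^\infty$. Boundary conditions are preserved, since the products vanish on $\partial\Omega$ and the chain-rule terms stay in $H^s_D$ for $s\le2$.

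With these bounds, $\Gamma$ maps the ball of radius $r=C\delta$ in $Y_T$ into itself and is a contraction there, provided $\delta$ is small. The fixed point then yields the solution $v=w+z$ and the control $h$.

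The main obstacle is the linear step. Theorem \ref{lin_ar} only gives surjectivity, whereas the fixed-point argument needs a bounded right inverse with values in $C([0,T];H^s_D)$, and needs the Strichartz estimates to survive the time-dependent, non-self-adjoint potentials $V_1,V_2\overline{\cdot}$. My first attempt will be to obtain the right inverse from HUM, using a continuity estimate for $\mathcal G$ via the closed range theorem. For the regularity in time of $h$ I would use the smoothing/regularity argument employed for Theorem \ref{lin_th}, transferred by perturbation to the case with potentials.
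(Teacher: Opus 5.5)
Your strategy is the paper's: the same reduction $v=w+z$ with the same remainder $N(w,z)=\lambda(2w|z|^2+\overline wz^2+|z|^2z)$, the HUM operator from Theorem \ref{lin_ar}, the real-linear propagator $S(t,r)$ handled by Duhamel and Gronwall, and a contraction in a Strichartz-type space. However, two steps fail as written, and the first is the control formula. Since $\Gamma(z)$ solves $iz_t+\Delta z+V_1z+V_2\overline z=\varphi h-N(w,z)$, Duhamel gives
\[
\Gamma(z)(T)=S(T,0)z_0+\int_0^TS(T,\tau)\bigl(-i\varphi h(\tau)\bigr)\,d\tau+\int_0^TS(T,\tau)\bigl(iN(w,z)(\tau)\bigr)\,d\tau .
\]
So you must take $h=\mathcal G\bigl(z_T-S(T,0)z_0-\int_0^TS(T,\tau)(iN(w,z)(\tau))\,d\tau\bigr)$. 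Your expression has the opposite sign, and it also pulls $i$ out of the propagator. Because of the term $V_2\overline z$, $S(T,\tau)$ is only real-linear, so $iS(T,\tau)N\neq S(T,\tau)(iN)$ in general. With your $h$, a fixed point of $\Gamma$ does not reach $z_T$. This is exactly the point the paper insists on: the factor $i$ must stay inside $S$, and the control operator $G_w$ is only real-linear (your $\mathcal G$ corresponds to $G_w(0,\cdot)$).

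The second failure concerns $n=3$, $s=1$. There the $L^\infty_x$ bound cannot come from Sobolev embedding or from the $1/p$-loss estimate \eqref{main_1}. For $H^1_0$ data one must take $s'=1-\frac1p$, and $\frac2p+\frac3q=\frac32$ then gives $s'q=\frac{6(p-1)}{3p-4}<3$ for every $p>2$, so $W^{s',q}(\Omega)\not\hookrightarrow L^\infty(\Omega)$. You need the Blair--Smith--Sogge estimate \eqref{L4Linf3}, i.e.\ Lemma \ref{L4Linfty}, with $X_T=C([0,T];H^1_0)\cap L^4(0,T;L^\infty)$ as in Proposition \ref{H1-Control}. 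Also, in dimension two the Burq--G\'erard--Tzvetkov route gives $L^pW^{1-1/p,q}$ from $H^1_0$ data, not $L^pW^{1,q}$.

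For $s=2$, drop the Strichartz component: $C([0,T];H^2_D)$ suffices, and the estimates of Section \ref{Sec2} provide no $W^{2,q}$ Strichartz bound anyway.

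Finally, what you call the main obstacle is already settled. The proof of Theorem \ref{lin_ar} produces a bounded real-linear $G_w$. The HUM control $-i\Lambda_s^{-1}(\varphi\,U(\cdot)v_0)$ lies in $C([0,T];H^s_D)$ automatically, since $U(\cdot)v_0\in C([0,T];H^{-s}_D)$, so no extra smoothing argument is needed for $h$. Properties (i)--(ii) of Theorem \ref{control_n} for $S$ follow from those for $W$ by Duhamel, Gronwall and Minkowski, as you indicate.
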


Finally, the method applies to the quintic nonlinearity near the zero solution.

\begin{corollary}[Quintic nonlinearity]\label{quintic}
Assume that $\Theta$ is non-trapping and that $\Omega$ is connected, and let $T>0$, $n\in\{2,3\}$ and $s\in\{1,2\}$. Then the conclusions of Theorems \ref{principal} and \ref{nlin_th} remain valid when the cubic terms $\lambda|u|^2u$ in \eqref{s1} and $\lambda|v|^2v$ in \eqref{s1a} are replaced by $\lambda|u|^4u$ and $\lambda|v|^4v$, respectively.
\end{corollary}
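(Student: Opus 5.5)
The plan is to repeat the proofs of Theorems \ref{nlin_th} and \ref{principal} with the cubic term replaced by the quintic one. Only the nonlinear estimates used in the fixed-point argument change.

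\textbf{Step 1: reduction to a fixed point.} By Theorem \ref{lin_th} (at the level $s$), there is a bounded linear operator $\Phi$ that, given data $(z_0,z_T)$, produces a control $h$ with $z(T)=z_T$; it comes from the HUM construction based on the observability inequality. For $v_0,v_T\in H^s_D(\Omega)$ I would solve the fixed-point problem
\[
v=\mathcal{T}(v):=S(t)v_0-i\int_0^t S(t-\tau)\bigl(\varphi h-\lambda|v|^4v\bigr)(\tau)\,d\tau,
\]
with $h=\Phi\bigl(v_0,\,v_T+i\int_0^T S(T-\tau)\lambda|v|^4v\,d\tau\bigr)$. This choice forces $v(T)=v_T$.

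\textbf{Step 2: the working space.} I would run the argument in $Y_T=C([0,T];H^s_D(\Omega))\cap Z_T$, where $Z_T$ is the Strichartz/local smoothing space already used for the cubic case. In dimension three with $s=1$, I would take the Strichartz norms available for non-trapping obstacles, namely those of Blair--Smith--Sogge and Ivanovici--Planchon, for example $L^{5}_tL^{30}_x$ or $L^{10}_{t,x}$ at the $\dot H^1$ level.

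\textbf{Step 3: the quintic nonlinear estimate.} The goal is
\[
\Bigl\|\int_0^t S(t-\tau)|v|^4v\,d\tau\Bigr\|_{Y_T}\le C\|v\|_{Y_T}^5,
\]
together with the matching Lipschitz estimate with right-hand side $C(\|v\|^4+\|\tilde v\|^4)\|v-\tilde v\|$.

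For $s=2$, $H^2_D(\Omega)$ is an algebra contained in $L^\infty$ when $n\le3$. The estimate then follows from the energy estimate and $\||v|^4v\|_{H^2_D}\le C\|v\|_{H^2}^5$. One must check that $|v|^4v$ has vanishing trace on $\partial\Omega$, which holds because $v$ does.

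For $s=1$ and $n=2$, the embedding $H^1_0\subset L^p$ for all $p<\infty$ gives $\|\,|v|^4v\|_{L^1_TH^1}\le CT\|v\|_{L^\infty H^1}\|v\|^4_{L^\infty L^\infty_{\rm loc}}$, which fails at $L^\infty$. I would therefore use the Strichartz estimate $L^4_tL^\infty_x$-type with loss as in the cubic case, or a Brezis--Gallouet argument.

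For $s=1$ and $n=3$, the problem is energy-critical. I would estimate $\|\nabla(|v|^4v)\|_{L^2_tL^{6/5}_x}\lesssim\|v\|_{L^{10}_{t,x}}^4\|\nabla v\|_{L^{10}_tL^{30/13}_x}$ and use the scale-invariant Strichartz estimates of \cite{BSS} in the admissible range. The $L^\infty_tH^1$ bound for the control term follows from Theorem \ref{lin_th}, and $\varphi h$ also belongs to the dual Strichartz spaces.

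\textbf{Step 4: contraction and the boundary problem.} With these estimates, $\mathcal{T}$ maps a small ball of $Y_T$ into itself and is a contraction once $\delta$ is small, because the nonlinearity is of order five. This gives the quintic version of Theorem \ref{nlin_th}. The quintic version of Theorem \ref{principal} then follows exactly as before: extend the data by Lemma \ref{extension}, take $R=R_0+1$, and restrict to $\Omega_0$. That step never uses the form of the nonlinearity.

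\textbf{Main obstacle.} The hard case is $n=3$, $s=1$. Smallness alone does not make the energy-critical Duhamel term small in $L^\infty H^1$; one needs a critical Strichartz norm in which the linear flow is controlled on $[0,T]$ for every $T$, and in which the control term $\varphi h$ also lands. I would try the exponents of \cite[Section 5]{BSS}/\cite{IP}, and show that the HUM control, being $C([0,T];H^1_0)$ and supported away from the obstacle, is admissible as a forcing term in those norms via inhomogeneous Strichartz estimates.
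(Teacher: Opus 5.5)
Your treatment of $s=2$ (algebra property of $H^2$), of $n=2$, $s=1$ (the $L^4_tL^\infty_x$ bound from the Strichartz estimates with loss), and of the passage to the boundary problem through Lemma \ref{extension} agrees with the paper. There are two small slips. First, with your Duhamel formula the linear part must reach $v_T-i\int_0^TS(T-\tau)\lambda|v|^4v\,d\tau$, not $v_T+i\int_0^T\cdots$; otherwise $v(T)\neq v_T$. Second, the Brezis--Gallouet alternative is of no use at the $H^1$ level, since it needs a bound in a stronger norm.

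The genuine gap is the energy-critical case $n=3$, $s=1$. You leave it as the ``main obstacle'', and the estimate you propose for it is not available. The bound $\|\nabla(|v|^4v)\|_{L^2_tL^{6/5}_x}\lesssim\|v\|^4_{L^{10}_{t,x}}\|\nabla v\|_{L^{10}_tL^{30/13}_x}$ would require three things in $\Omega$:
\begin{itemize}
\item the dual endpoint inhomogeneous Strichartz estimate from $L^2_tL^{6/5}_x$;
\item the $L^2$-level Strichartz estimate for the pair $(10,30/13)$;
\item the equivalence of $\|\nabla u\|_{L^q}$ with $\|(-\Delta_D)^{1/2}u\|_{L^q}$.
\end{itemize}
None of this is provided by \cite{BGT1,BSS} for a general non-trapping obstacle, and you do not verify it. The scale-invariant estimates of \cite{BSS} hold only in a restricted range of exponents, which does not contain the endpoint.

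The missing idea is that no dual Strichartz estimate is needed. Keep the space $X_T=C([0,T];H^1_0(\Omega))\cap L^4(0,T;L^\infty(\Omega))$ of the cubic case, and put every forcing term in $L^1(0,T;H^1_0(\Omega))$. By Minkowski's inequality and the homogeneous estimate \eqref{L4Linf3} applied on each interval $[r,T]$,
$$\Big\|\int_0^tW(t-r)f(r)\,dr\Big\|_{X_T}\le C\|f\|_{L^1(0,T;H^1_0)}.$$
Hence the control term $\varphi h\in C([0,T];H^1_0)$ lands in $X_T$ automatically, and your obstacle disappears. For the nonlinearity,
$$\||v|^4v\|_{L^1(0,T;H^1_0)}\le5|\lambda|\,\|v\|_{L^\infty(0,T;H^1_0)}\|v\|^4_{L^4(0,T;L^\infty)},$$
together with the analogous difference bound. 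The critical exponent is reached exactly: the factor $T^{(5-m)/4}$ in Proposition \ref{H1-Control} equals $1$ for $m=5$. So once this auxiliary norm is included, smallness of the data suffices, and Theorem \ref{control_n} applies with $m=5$ without any further Strichartz estimate.
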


For $n=3$ and $s=1$, the quintic nonlinearity is energy-critical; compare with \cite{BrCaCaDo}, where local null controllability is proved on $\mathbb{R}^3$.

\subsection{Novelty and main ingredients}
To the best of our knowledge, Theorems \ref{principal}--\ref{nlin_ar} and Corollary \ref{quintic} are the first exact controllability results for the nonlinear Schr\"odinger equation in an exterior domain, and Theorem \ref{principal} is the first exact controllability result for a nonlinear Schr\"odinger equation with controls acting only on the outer part of the boundary of a domain surrounding an obstacle. Let us point out the main features of the approach.

\begin{itemize}
\item[(i)] \emph{Only the non-trapping assumption is needed.} The usual way of proving observability for controls supported outside a ball is the multiplier $q(x)=x$, as in \cite{RoZhaJDE,BrCaCaDo} on $\mathbb{R}^n$. In an exterior domain this produces the boundary term $$\frac12\int_{\partial\Omega\times(0,T)}(q\cdot\nu)|\partial_\nu v|^2,$$ whose favorable sign requires $\Theta$ to be star-shaped. (For data in $H^1_0(\Omega)$, this normal trace is well defined in $L^2(\partial\Omega\times(0,T))$, by the usual hidden regularity estimate obtained with a compactly supported multiplier equal to $\nu$ on $\partial\Omega$; the obstruction is its sign, not its existence.) Our argument instead relies on the local smoothing effect of Burq, G\'erard and Tzvetkov \cite{BGT1}, which holds for every non-trapping obstacle: it turns the contribution of the region near the obstacle into a lower order term, which is then removed by a compactness--uniqueness argument (see Subsection \ref{sec:linear} for details). This strategy was used at the $L^2$ level by Bortot and Cavalcanti \cite{BoCa} to prove the exponential stabilization of the damped equation; for linear damping, the decay proved there is equivalent to an $L^2$ observability inequality from $\omega$ for some $T>0$ \cite{Liu}. The new point here is that the argument is carried out at every level $H^\sigma_D(\Omega)$, $\sigma\in[-2,2]$, including the negative ones needed for the HUM method in $H^1_0(\Omega)$ and in $H^2(\Omega)\cap H^1_0(\Omega)$; at each of these levels, the proof of the observability inequality uses no integration by parts up to $\partial\Theta$ and no normal boundary trace. (The boundary control result, Theorem \ref{principal}, uses of course the Dirichlet trace of the solution on the outer sphere $\Gamma_1$.)
\item[(ii)] \emph{Unique continuation by a spectral argument.} For the free equation, unique continuation from $\omega$ is reduced to the absence of eigenfunctions of $\Delta_D$ vanishing outside a ball (Lemma \ref{UCP}). This replaces the Paley--Wiener--Schwartz argument used on $\mathbb{R}^3$ in the recent work \cite[Lemma 3.1]{BrCaCaDo}, which relies on the explicit kernel of $e^{it\Delta}$ and has no analogue on $\Omega$. For $L^2$-valued solutions, unique continuation can also be obtained from Holmgren's theorem, as in \cite{BoCa}; the spectral argument has the advantage of applying directly at every level of the scale $H^\sigma_D(\Omega)$, $\sigma\in[-2,2]$.
\item[(iii)] \emph{Arbitrary control time and the whole scale $H^s_D(\Omega)$, $s\in[-2,2]$.} The linear results hold for every $T>0$ and every $s\in[-2,2]$, with controls in $C([0,T];H^s_D(\Omega))$, and no commutator between $\varphi$ and fractional powers of $\Delta_D$ is needed (see Subsection \ref{sec:linear}). The nonlinear results are obtained by a perturbation argument in which the smallness comes only from the data, using, in the energy space, the Strichartz estimates with loss of \cite{BGT1} in dimension two and the scale-invariant $L^4_tL^\infty_x$ estimate of Blair, Smith and Sogge \cite{BSS} in dimension three, and the algebra property of $H^2$ in dimensions two and three.
\item[(iv)] \emph{Nontrivial trajectories.} The same compactness--uniqueness scheme applies to the linearized equation around a solution $w$. Local smoothing also allows us to show that solutions invisible from $\omega$ are automatically in $C([0,T];H^1_0(\Omega))$ (see Lemma \ref{regularity_w}), so that unique continuation needs to be assumed only at the energy level, as in \cite{Laurent2,CaCoCa}.
\end{itemize}

\subsection{Organization of the paper}
Section \ref{Sec2} collects the linear estimates in exterior domains due to Burq, G\'erard and Tzvetkov, and an $L^4_tL^\infty_x$ estimate of Blair, Smith and Sogge. In Section \ref{Sec3} we prove the observability, and consequently, the controllability of the linear equation (Theorem \ref{lin_th}), the internal controllability of the NLS around zero (Theorem \ref{nlin_th}), the boundary controllability (Theorem \ref{principal}) and the quintic case (Corollary \ref{quintic}). Section \ref{Sec4} deals with the control around nontrivial trajectories (Theorems \ref{lin_ar} and \ref{nlin_ar}). Finally, Section \ref{Sec5} discusses the scope of the results and some open problems.

\section{Linear estimates in exterior domains\label{Sec2}}
In this section we collect the linear estimates for the Schr\"odinger group in the exterior of a non-trapping obstacle on which our analysis relies. Except for Proposition \ref{BSS_est}, due to Blair, Smith and Sogge \cite{BSS}, all of them are due to Burq, G\'erard and Tzvetkov \cite{BGT1}; we state them in the form used below and refer to \cite{BSS,BGT1} for the proofs.

Let $\Theta\subset\mathbb{R}^n$, $n\geq2$, be a nonempty compact set with smooth boundary, $\Omega=\mathbb{R}^n\setminus\Theta$, and let $\Delta_D$ be the Dirichlet Laplacian on $\Omega$, with domain $H^2(\Omega)\cap H^1_0(\Omega)$. All cut-off functions used in this paper are real-valued. For $s\in\mathbb{R}$ we denote by $H^s_D(\Omega)$ the scale of spaces associated with $(1-\Delta_D)^{s/2}$ (see Subsection \ref{sec:linear} for the precise definition)\footnote{Recall that $H^s_D(\Omega)=H^s(\Omega)$ for $s\in[0,\frac12)$, that $H^s_D(\Omega)=H^s(\Omega)\cap H^1_0(\Omega)$ for $s\in[1,2]$ (in particular $H^1_D(\Omega)=H^1_0(\Omega)$ and $H^2_D(\Omega)=H^2(\Omega)\cap H^1_0(\Omega)$), and that $H^{-s}_D(\Omega)$ is the dual of $H^s_D(\Omega)$.}.

\begin{proposition}[{Cut-off resolvent, \cite[Propositions 2.2, 2.4 and 2.5]{BGT1}}]\label{p4}
Assume that $\Theta$ is non-trapping. Then for every $\chi\in C^\infty_0(\mathbb{R}^n)$ and every $s\in[-1,1]$ there exists $C>0$ such that, for every $\kappa\in\mathbb{R}$ and $0<\varepsilon\ll1$,
\begin{equation}\label{l9}
\|\chi(-\Delta_D-(\kappa\pm i\varepsilon)^2)^{-1}\chi\|_{H^s_D(\Omega)\to H^{s+1}_D(\Omega)}\leq C .
\end{equation}
\end{proposition}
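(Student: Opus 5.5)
The plan is to derive \eqref{l9} from the non-trapping high-frequency resolvent estimate of \cite{BGT1}, in three steps: a base estimate $L^2\to H^1$, an extension to $H^{-1}\to L^2$ by duality, and then interpolation for intermediate $s$.

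\textbf{Step 1: the base estimate at $s=0$.} The key input is the semiclassical bound valid for non-trapping obstacles:
\[
\|\chi(-\Delta_D-(\kappa\pm i\varepsilon)^2)^{-1}\chi\|_{L^2\to L^2}\leq \frac{C}{1+|\kappa|}.
\]
For $|\kappa|\geq1$ this is the Lax--Phillips/Vainberg/Melrose--Sjöstrand estimate, obtained by propagation of singularities (Melrose--Sjöstrand) along generalized bicharacteristics. For $|\kappa|\leq1$ it is a low-frequency bound. In dimension $n\geq3$ that bound follows from continuity of the cut-off resolvent up to $\kappa=0$. In dimension $n=2$ one needs the log-type analysis, where the Dirichlet condition on a nonempty obstacle removes the zero resonance. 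This is what \cite[Prop.~2.2]{BGT1} provides; I would invoke it as given.

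Next, I upgrade to $L^2\to H^1$. Set $u=R(\kappa\pm i\varepsilon)\chi f$ and take $\tilde\chi\in C_0^\infty$ with $\tilde\chi=1$ on $\operatorname{supp}\chi$. A Caccioppoli/energy identity gives
\[
\|\nabla(\chi u)\|^2 \lesssim (1+\kappa^2)\|\tilde\chi u\|^2+\|\chi f\|\,\|\chi u\|+\text{commutator terms}.
\]
Combined with the $1/(1+|\kappa|)$ decay above, this yields
\[
\|\chi R\chi\|_{L^2\to H^1_0}\leq C.
\]
The $1/(1+|\kappa|)$ gain is exactly what absorbs the factor $\kappa$ produced by one derivative.

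\textbf{Step 2: the endpoints $s=\pm1$.} For $s=-1$ I use duality. The adjoint of $\chi R(\kappa\pm i\varepsilon)\chi$ is $\chi R(\kappa\mp i\varepsilon)\chi$ up to the sign convention, which is still covered by Step 1. Hence $\chi R\chi$ maps $H^{-1}_D\to L^2$ boundedly.

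For $s=1$, i.e.\ $H^1_0\to H^2_D$, I write
\[
-\Delta_D(\chi u)=\chi\bigl((\kappa\pm i\varepsilon)^2u+\chi f\bigr)-2\nabla\chi\cdot\nabla u-(\Delta\chi)u,
\]
and use elliptic regularity for the Dirichlet problem. The term $\kappa^2\chi u$ has to be bounded in $L^2$ uniformly in $\kappa$, which needs $\|\chi u\|_{L^2}\lesssim\kappa^{-2}\|f\|_{H^1}$. I obtain this by writing
\[
R=\frac{R\,(-\Delta_D)-I}{z^2},\qquad z=\kappa\pm i\varepsilon,
\]
which converts the extra regularity of $f$ into extra decay in $\kappa$. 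Commuting $\Delta_D$ past $\chi$ produces terms of the form $\chi R\tilde\chi$ applied to $\nabla f$, and these are controlled by Step 1.

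\textbf{Step 3: interpolation.} For $s\in(-1,0)$ and $s\in(0,1)$, complex interpolation between the endpoints gives the bound. This uses the fact that $H^s_D$ is the complex interpolation scale of $(1-\Delta_D)^{s/2}$.

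\textbf{Main obstacle.} The main difficulty is the uniform high-frequency $L^2$ bound, which is where non-trapping enters. I would not reprove it but cite \cite{BGT1}. A secondary technical point is keeping the bound uniform as $\varepsilon\to0$ in the low-frequency regime when $n=2$.
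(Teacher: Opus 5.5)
Your argument is correct, but it goes further than the paper, which does not derive \eqref{l9} itself. The paper takes the whole $H^s_D\to H^{s+1}_D$ statement from \cite[Propositions 2.2, 2.4 and 2.5]{BGT1}: non-trapping gives the high frequencies, the Dirichlet condition on a nonempty $\Theta$ gives the low frequencies, and Proposition 2.5 combines both for \emph{nonnegative} cut-offs. Its only own step is to reduce a sign-changing real $\chi$ to that case by writing $\chi=\chi\chi_1$, with $0\leq\chi_1\in C^\infty_0(\mathbb{R}^n)$ equal to $1$ on $\operatorname{supp}\chi$.

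You instead cite only the scalar bound $\|\chi R(z)\chi\|_{L^2\to L^2}\leq C/(1+|\kappa|)$, where $R(z)=(-\Delta_D-z^2)^{-1}$ and $z=\kappa\pm i\varepsilon$. You then rebuild the Sobolev-scale version from the energy identity, duality, the identity $R=z^{-2}(R(-\Delta_D)-I)$ and interpolation. The paper's route is shorter and leaves every uniformity issue, notably low frequencies in $n=2$, inside \cite{BGT1}. Yours isolates the geometric input in a single $L^2$ estimate and shows why the $1/|\kappa|$ gain is worth exactly one derivative, uniformly in $\kappa$. Moreover, the multiplier $\chi^2\bar u$ and the adjoint formula $(\chi R(z)\chi)^*=\chi R(\bar z)\chi$ use only that $\chi$ is real, so the nonnegativity reduction becomes unnecessary.

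Three small corrections:
\begin{itemize}
\item The low-frequency bound is \cite[Proposition 2.4]{BGT1}, not Proposition 2.2.
\item In your $s=1$ step, the main term $\chi R(z)\,\Delta_D(\chi f)$ involves $\Delta_D(\chi f)$, which is only in $H^{-1}_D$. It is therefore controlled by your $s=-1$ (duality) bound rather than by Step 1 alone; Step 1 handles only the first-order commutator terms.
\item No elliptic regularity is needed there, because $\|\chi u\|_{H^2_D}=\|(1-\Delta_D)(\chi u)\|_{L^2}$ by the definition of the scale.
\end{itemize}
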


We state \eqref{l9} only for $s\in[-1,1]$, which is the range used below. For $s\geq\frac32$ the spaces $H^{s+1}_D(\Omega)$ encode further compatibility conditions on $\partial\Omega$ (for instance $\Delta u=0$ on $\partial\Omega$ for $u\in H^4_D(\Omega)$), which are not preserved by multiplication by a general cut-off $\chi$; the range $[-2,-1]$ will be obtained in Lemma \ref{smoothing_neg} by duality from the case $s=1$. The high frequency part of \eqref{l9} is a consequence of the non-trapping assumption, while the low frequency part holds in every dimension $n\geq2$ for the Dirichlet Laplacian because $\Theta\neq\emptyset$ \cite[Proposition 2.4]{BGT1}. In \cite[Proposition 2.5]{BGT1} the cut-off is assumed to be nonnegative; the general case follows by writing $\chi=\chi\chi_1$, with $0\leq\chi_1\in C^\infty_0(\mathbb{R}^n)$ equal to $1$ on $\operatorname{supp}\chi$, since multiplication by $\chi$ is bounded on $H^s_D(\Omega)$ and on $H^{s+1}_D(\Omega)$ for $s\in[-1,1]$.

\begin{proposition}[{Local smoothing, \cite[Proposition 2.7]{BGT1}}]\label{p5}
Assume that $\Theta$ is non-trapping. For every $\chi\in C^\infty_0(\mathbb{R}^n)$ there exists $C>0$ such that, for every $T>0$,
\begin{equation}\label{l11}
\|\chi u\|_{L^2(0,T;H^{s+1}_D(\Omega))}\leq C\|\chi f\|_{L^2(0,T;H^s_D(\Omega))},\quad s\in[-1,1],\end{equation}
with 
$$u(t)=\int_0^te^{i(t-r)\Delta_D}\chi f(r)\,dr,$$
and
\begin{equation}\label{l12}
\|\chi e^{it\Delta_D}v_0\|_{L^2(0,T;H^{s+1/2}_D(\Omega))}\leq C\|v_0\|_{H^s_D(\Omega)},\qquad s\in[0,1].
\end{equation}
\end{proposition}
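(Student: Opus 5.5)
The plan is to derive both estimates from the cut-off resolvent bound \eqref{l9} of Proposition \ref{p4}, following the Kato-type argument of \cite{BGT1}: take the Fourier–Laplace transform in time, apply the resolvent bound, and return via Plancherel. The inhomogeneous estimate \eqref{l11} comes first, and \eqref{l12} is then obtained by a $TT^*$ argument.

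\emph{Step 1: the inhomogeneous estimate \eqref{l11}.} Extend $f$ by zero outside $(0,T)$. For $\varepsilon>0$, let $u_\varepsilon(t)=\int_0^t e^{i(t-r)\Delta_D}e^{-\varepsilon(t-r)}\chi f(r)\,dr$ on $t\ge0$. It solves $iu_t+\Delta_D u+i\varepsilon u=i\chi f$ with $u(0)=0$. Taking the Fourier transform in $t$ (dual variable $\tau$) gives
\[
\hat u_\varepsilon(\tau)=i\,(-\tau+\Delta_D+i\varepsilon)^{-1}\chi\hat f(\tau)=-i\,(-\Delta_D-(\kappa+ i\varepsilon')^2)^{-1}\chi\hat f(\tau),
\]
after writing $-\tau+i\varepsilon=-(\kappa+i\varepsilon')^2$. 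For $\tau>0$ one takes $\kappa\approx\sqrt\tau$ and $\varepsilon'\approx\varepsilon/(2\sqrt\tau)$. For $\tau\le0$ the point $\tau$ lies off the spectrum $[0,\infty)$, and the resolvent is bounded more directly, by \eqref{l9} with $\kappa$ near $0$ together with elliptic estimates. Multiplying by $\chi$ and applying \eqref{l9} uniformly in $\tau$ gives
\[
\|\chi\hat u_\varepsilon(\tau)\|_{H^{s+1}_D}\le C\|\chi\hat f(\tau)\|_{H^s_D}.
\]
Plancherel in $\tau$ yields $\|\chi u_\varepsilon\|_{L^2(\mathbb{R}_+;H^{s+1}_D)}\le C\|\chi f\|_{L^2(0,T;H^s_D)}$. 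Letting $\varepsilon\to0$ by Fatou, and noting $u_\varepsilon\to u$ in $C([0,T];H^{s}_D)$, gives \eqref{l11} with $C$ independent of $T$.

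\emph{Step 2: the homogeneous estimate \eqref{l12}.} I would prove the endpoint $s=0$ and then interpolate. The case $s=1$ follows from $s=0$, because $e^{it\Delta_D}$ commutes with $(1-\Delta_D)^{1/2}$ and $[\chi,(1-\Delta_D)^{1/2}]$ is lower order. The cleanest route, however, is to repeat the argument of Step 3 at the level $H^1_D$, using \eqref{l9} with $s=1$.

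\emph{Step 3: the case $s=0$ by $TT^*$.} Let $A v_0=\chi e^{it\Delta_D}v_0$, viewed as a map from $L^2(\Omega)$ to $L^2(\mathbb{R};H^{1/2}_D)$. Then
\[
AA^*F(t)=\chi\int_{\mathbb{R}}e^{i(t-r)\Delta_D}\chi F(r)\,dr,
\]
which splits into a retarded part $\int_{r<t}$ and an advanced part $\int_{r>t}$. By Step 1 with $s=-1/2$ (an admissible value, since $s\in[-1,1]$), the retarded part maps $L^2H^{-1/2}_D$ into $L^2H^{1/2}_D$; the advanced part is handled the same way by time reversal. Since $H^{-1/2}_D$ and $H^{1/2}_D$ are dual spaces, boundedness of $AA^*$ from $L^2H^{-1/2}_D$ to $L^2H^{1/2}_D$ is exactly boundedness of $A$ into $L^2H^{1/2}_D$, which is \eqref{l12} for $s=0$. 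Restricting from $\mathbb{R}$ to $(0,T)$ keeps the constant uniform in $T$.

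The main obstacle I expect is the uniformity of the resolvent bound across all $\tau$, especially at low frequencies $\tau\to0$ and for $\tau<0$. This is exactly what \cite[Proposition 2.4]{BGT1} supplies, since $\Theta\neq\emptyset$ in dimension $n\ge2$. A second delicate point is justifying the limit $\varepsilon\to0$ and the interchange of the cut-offs with the fractional spaces. That interchange stays valid because only $s\in[-1,1]$ is used, where multiplication by $\chi$ is bounded on $H^{s}_D$ and $H^{s+1}_D$.
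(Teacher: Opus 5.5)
Your Step~1 is the Plancherel--resolvent argument that the paper attributes to \cite{BGT1}; the paper gives no proof of its own and only describes that argument in the proof of Lemma~\ref{smoothing_neg}. Your $TT^*$ argument in Step~3 correctly gives \eqref{l12} for $s=0$, uniformly in $T$. Step~1 contains a harmless sign slip: with your Fourier convention, $\Delta_D-\tau+i\varepsilon=-(-\Delta_D-z)$ with $z=-\tau+i\varepsilon$, so it is $\tau<0$ that lies near the spectrum. Since \eqref{l9} holds with both signs, nothing changes.

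The genuine gap is the endpoint $s=1$ of \eqref{l12}. All $s\in(0,1]$ depend on it through your interpolation, and the paper really uses it (Lemma~\ref{relaxed}, case $\tau\in(\frac32,2)$). Neither of your two suggestions proves it.
\begin{itemize}
\item \emph{Commutator route.} Grant that $[(1-\Delta_D)^{1/2},\chi]$ maps $H^1_D(\Omega)$ into $H^{1/2}_D(\Omega)$; even this needs a proof for the Dirichlet realization and a general $\chi$, and the paper deliberately avoids commutators with fractional powers of $\Delta_D$. The function $[(1-\Delta_D)^{1/2},\chi]e^{it\Delta_D}v_0$ is still not compactly supported in $x$, so no local smoothing applies to it. You only get the bound $C\sqrt T\,\|v_0\|_{H^1_D}$ in $L^2(0,T;H^{1/2}_D)$, not a constant independent of $T$.
\item \emph{Repeating Step~3 at level $H^1_D$.} The Hilbert structure of $H^1_D$ enters the $TT^*$ operator, which becomes $F\mapsto\chi\int_{\mathbb{R}}e^{i(t-r)\Delta_D}(1-\Delta_D)^{-1}\chi F(r)\,dr$, from $L^2(\mathbb{R};H^{-3/2}_D)$ to $L^2(\mathbb{R};H^{3/2}_D)$. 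After the retarded/advanced splitting, its Fourier transform is $\chi(-\Delta_D-z)^{-1}(1-\Delta_D)^{-1}\chi$. Estimate \eqref{l9} does not control this, because $(1-\Delta_D)^{-1}\chi F$ is not compactly supported. The resolvent identity does not help either: it splits the operator into two terms of order $-2$, neither of which maps $H^{-3/2}_D$ into $H^{3/2}_D$.
\end{itemize}

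You can close the gap inside your framework. Run $TT^*$ at the level $s=-1$, where the nonlocal factor is replaced by a local one, and then climb to $s=1$ with an integer-order commutator.
\begin{itemize}
\item \emph{The case $s=-1$.} Take $Ag:=\chi e^{it\Delta_D}g$ from $H^{-1}_D$ to $L^2(\mathbb{R};H^{-1/2}_D)$. The $TT^*$ operator is $F\mapsto\chi\int_{\mathbb{R}}e^{i(t-r)\Delta_D}(1-\Delta_D)(\chi F(r))\,dr$ on $L^2(\mathbb{R};H^{1/2}_D)$. Since $1-\Delta_D$ is a differential operator, $(1-\Delta_D)(\chi F)=\chi_1(1-\Delta_D)(\chi F)$ for any $\chi_1\in C^\infty_0(\mathbb{R}^n)$ equal to $1$ near $\operatorname{supp}\chi$. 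Moreover $\|(1-\Delta_D)(\chi F)\|_{H^{-3/2}_D}\le C\|F\|_{H^{1/2}_D}$. Hence the retarded and advanced parts are bounded by your Step~1 with $s=-\frac32$. That case follows from \eqref{l9} with $s=\frac12$ by duality, exactly as in Lemma~\ref{smoothing_neg}.
\item \emph{From $s=-1$ to $s=1$.} Let $v_0\in H^1_D$ and $v=e^{it\Delta_D}v_0$. Use the identity $(1-\Delta_D)(\chi v)=\chi e^{it\Delta_D}(1-\Delta_D)v_0-[\Delta,\chi](\chi_1 v)$, the isometry $1-\Delta_D:H^{3/2}_D\to H^{-1/2}_D$, and the boundedness of $[\Delta,\chi]$ from $H^{1/2}_D$ to $H^{-1/2}_D$. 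Together with the cases $s=-1$ and $s=0$, these give $\|\chi v\|_{L^2(0,T;H^{3/2}_D)}\le\|\chi e^{it\Delta_D}(1-\Delta_D)v_0\|_{L^2(0,T;H^{-1/2}_D)}+C\|\chi_1 v\|_{L^2(0,T;H^{1/2}_D)}\le C\|v_0\|_{H^1_D}$, uniformly in $T$.
\end{itemize}
Interpolation then yields \eqref{l12} for every $s\in[0,1]$.
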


\begin{proposition}[{Strichartz estimates, \cite[Propositions 2.10 and 2.11]{BGT1}}]\label{st1}
Assume that $\Theta$ is non-trapping. Let $T>0$, $s\in[0,1]$ and let $(p,q)$, $p>2$, satisfy $\frac2p+\frac nq=\frac n2$. Then, for $\chi\in C^\infty_0(\mathbb{R}^n)$ with $\chi=1$ near $\Theta$,
\begin{equation}\label{se1}
\|(1-\chi)e^{it\Delta_D}u_0\|_{L^p(0,T;W^{s,q}(\Omega))}\leq C\|u_0\|_{H^s_D(\Omega)},
\end{equation}
and, globally in $\Omega$, with a loss of $1/p$ derivative,
\begin{equation}\label{main_1}
\|e^{it\Delta_D}u_0\|_{L^p(0,T;W^{s,q}(\Omega))}\leq C\|u_0\|_{H^{s+1/p}_D(\Omega)} .
\end{equation}
\end{proposition}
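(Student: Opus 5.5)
This statement is quoted from Burq, G\'erard and Tzvetkov, and the paper refers to \cite{BGT1} for the proof. What I can give is a sketch of how I would reconstruct the argument from the earlier results.

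\textbf{Estimate \eqref{se1}: outside the obstacle.} Set $w=(1-\chi)e^{it\Delta_D}u_0$. Then $w$ extends by zero to $\mathbb{R}^n$ and solves the free equation
$$
iw_t+\Delta w=[\Delta,\chi]e^{it\Delta_D}u_0=:g,
$$
where $g=-2\nabla\chi\cdot\nabla u-(\Delta\chi)u$ with $u=e^{it\Delta_D}u_0$. The commutator is supported in a compact set $K$ away from $\Theta$. By Duhamel on $\mathbb{R}^n$,
$$
w(t)=e^{it\Delta}(1-\chi)u_0-i\int_0^t e^{i(t-r)\Delta}g(r)\,dr .
$$

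The free part satisfies the classical Strichartz estimate in $W^{s,q}$, controlled by $\|u_0\|_{H^s}$. For the Duhamel term, $g$ costs one derivative of $u$ on $K$. By the local smoothing estimate \eqref{l12}, with $\tilde\chi=1$ on $K$,
$$
\|g\|_{L^2_tH^{s-1/2}}\lesssim\|\tilde\chi u\|_{L^2_tH^{s+1/2}}\lesssim\|u_0\|_{H^s_D}.
$$
It then suffices to prove the free-space dual estimate
$$
\Big\|\int_0^t e^{i(t-r)\Delta}\tilde\chi g\,dr\Big\|_{L^p W^{s,q}}\lesssim\|\tilde\chi g\|_{L^2H^{s-1/2}} .
$$
I would prove it in three steps:
\begin{itemize}
\item By $TT^*$, the map $f\mapsto\int_{\mathbb{R}} e^{-ir\Delta}\tilde\chi f(r)\,dr$ is bounded from $L^2_tH^{s-1/2}$ to $H^s$; this is the dual of the free local smoothing $\|\tilde\chi e^{it\Delta}\|_{L^2H^{1/2}}$.
\item Composing with the Strichartz estimate gives the untruncated integral over $r\in\mathbb{R}$.
\item The Christ--Kiselev lemma passes to $r<t$, which is allowed since $p>2$.
\end{itemize}
This establishes \eqref{se1}. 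The case $s\in(0,1)$ follows by interpolation, or directly, since the spaces coincide away from the boundary.

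\textbf{Estimate \eqref{main_1}: near the obstacle.} It remains to estimate $\chi u$ in $L^pW^{s,q}$. The approach is to localize in frequency:
\begin{itemize}
\item Decompose $u_0$ dyadically with respect to $\Delta_D$, at frequency $2^k$.
\item On time intervals of length $2^{-k}$, a wave packet at frequency $2^k$ stays inside a fixed compact set. On such intervals, the Strichartz estimates on compact manifolds with boundary hold on the semiclassical time scale $h=2^{-k}$, via the Smith--Sogge/Burq parametrix or a reflection argument.
\item Sum over the $\sim T2^k$ intervals in $L^p_t$. Gluing costs $(2^k)^{1/p}$, which is the loss of $1/p$ derivative.
\item To control the sum over intervals, use local smoothing \eqref{l11}--\eqref{l12}. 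The local $L^2_t$ norms of $\chi u_k$ sum, with a half-derivative gain, to $\|u_0\|_{H^s}$, which compensates the Duhamel terms generated by cutting off in time.
\item Finally, sum over $k$ with Littlewood--Paley square functions for $\Delta_D$ (available via Gaussian heat kernel bounds), and combine with \eqref{se1} for the $(1-\chi)$ part.
\end{itemize}

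\textbf{Main obstacle.} The hard step is the short-time semiclassical Strichartz estimate near the boundary, that is, constructing the parametrix on time scales $h$ in a domain with boundary. I would import it from \cite{BGT1} (and Anton \cite{Anton}) rather than reprove it.
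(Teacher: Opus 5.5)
The gap is in your proof of \eqref{main_1}. Your treatment of \eqref{se1} is fine and is, in substance, the argument of \cite{BGT1}: $(1-\chi)u$ solves the free equation on $\mathbb{R}^n$ with source $-[\Delta,\chi]u=-2\nabla\chi\cdot\nabla u-(\Delta\chi)u$ (your first formula has the wrong sign; the explicit one is right). That source is bounded in $L^2_tH^{s-1/2}$ by \eqref{l12}. The Duhamel term is then handled by the dual of the free smoothing estimate (this is duality, not $TT^*$), the free Strichartz estimate, and the Christ--Kiselev lemma, which applies since $p>2$.

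For \eqref{main_1}, everything rests on semiclassical Strichartz estimates near $\partial\Theta$, without loss and for \emph{every} admissible pair, on time intervals of length $2^{-k}$. You cannot import this:
\begin{itemize}
\item \cite{BGT1} contains no such estimate.
\item The reflection approach of \cite{Anton}, which turns the boundary into a Lipschitz metric, gives such bounds only with an additional loss.
\item The semiclassical bounds near a boundary used by Blair, Smith and Sogge hold only in a restricted range of exponents; this is exactly why their scale-invariant exterior estimates \cite{BSS} are restricted.
\end{itemize}
So, as written, \eqref{main_1} is reduced to an estimate that is not available in the generality of the statement. The sketch is also internally inconsistent. If such semiclassical bounds were at hand, controlling the time cut-offs by local smoothing (the scheme of \cite{BSS}) would give no loss at all near the obstacle. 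The $2^{k/p}$ gluing loss belongs instead to the compact-manifold argument, where no smoothing is used.

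The missing idea is much simpler, and it is what Remark \ref{rem_sec2} refers to: near the obstacle one only needs \eqref{l12} and Sobolev embedding. Let $u=e^{it\Delta_D}u_0$. The spectral theorem gives $\|\chi u(t)\|_{H^{\sigma+1/p}_D}\le\|\chi u(t)\|_{H^\sigma_D}^{1-2/p}\|\chi u(t)\|_{H^{\sigma+1/2}_D}^{2/p}$. Hence, by unitarity, the boundedness of multiplication by $\chi$, and \eqref{l12},
\[
\int_0^T\|\chi u(t)\|^p_{H^{\sigma+1/p}_D}\,dt\le\sup_{t\in[0,T]}\|\chi u(t)\|^{p-2}_{H^\sigma_D}\int_0^T\|\chi u(t)\|^2_{H^{\sigma+1/2}_D}\,dt\le C\|u_0\|^p_{H^\sigma_D}.
\]
Choosing $\sigma=s+\frac1p$ gives $\chi u\in L^p(0,T;H^{s+2/p}(\Omega))$. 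Since $\frac2p=\frac n2-\frac nq$ with $q<\infty$, the embedding $H^{s+2/p}(\Omega)\hookrightarrow W^{s,q}(\Omega)$ yields $\|\chi u\|_{L^p(0,T;W^{s,q})}\le C\|u_0\|_{H^{s+1/p}_D}$. Adding \eqref{se1} for $(1-\chi)u$, with $\|u_0\|_{H^s_D}\le\|u_0\|_{H^{s+1/p}_D}$, proves \eqref{main_1}. The loss is exactly the difference between the $\frac1p$ derivative gained by interpolating smoothing with unitarity and the $\frac2p$ required by Sobolev.

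With \eqref{l12} as stated, i.e. for levels in $[0,1]$, this covers $s\le1-\frac1p$, which is the case used in Lemma \ref{L4Linfty}. Larger $s$ needs the smoothing estimate slightly above $H^1_D$. No dyadic decomposition, parametrix or square function is needed.
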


In dimension three we shall also use the following scale-invariant estimate.

\begin{proposition}[{\cite[Section 5]{BSS}}]\label{BSS_est}
Let $n=3$ and assume that $\Theta$ is non-trapping. For every $T>0$ there exists $C>0$ such that
\begin{equation}\label{L4Linf3}
\|e^{it\Delta_D}u_0\|_{L^4(0,T;L^\infty(\Omega))}\leq C\|u_0\|_{H^1_0(\Omega)},\qquad\forall\,u_0\in H^1_0(\Omega).
\end{equation}
\end{proposition}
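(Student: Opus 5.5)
The statement here is Proposition \ref{BSS_est}, which the paper attributes to \cite[Section 5]{BSS}. I would therefore not reprove it from scratch, but explain how it follows from the scale-invariant Strichartz estimates of Blair, Smith and Sogge together with Sobolev embedding. The plan is as follows.

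\textbf{Step 1: a Strichartz estimate at scale-invariant regularity.} The first step is to recall the result of \cite{BSS}. In the exterior of a non-trapping obstacle in $\mathbb{R}^3$, for admissible pairs in a restricted range, they prove
\[
\|e^{it\Delta_D}u_0\|_{L^p(0,T;L^q(\Omega))}\le C\|u_0\|_{L^2(\Omega)},\qquad \tfrac2p+\tfrac3q=\tfrac32,
\]
with no derivative loss. The range includes $p=4$, $q=3$. By commuting with powers of $(1-\Delta_D)$, the same estimate holds with $L^2$ replaced by $H^1_D(\Omega)=H^1_0(\Omega)$ and $L^q$ replaced by the Dirichlet Sobolev space of order one. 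This gives $e^{it\Delta_D}u_0\in L^4(0,T;W^{1,3}(\Omega))$.

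\textbf{Step 2: the embedding obstruction.} In three dimensions, $W^{1,3}$ just fails to embed into $L^\infty$. So the $L^4L^\infty$ bound cannot come from this single estimate plus Sobolev embedding. I would follow the proof of \cite[Section 5]{BSS}, which works with a Littlewood--Paley decomposition associated with $\Delta_D$.

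\textbf{Step 3: frequency-localised bounds.} For a dyadic frequency $N$, the spectrally localised piece $P_N u$ should satisfy Bernstein's inequality
\[
\|P_Nu\|_{L^\infty}\lesssim N^{3/q}\|P_Nu\|_{L^q}.
\]
Choosing an admissible pair slightly off the endpoint gives
\[
\|P_N e^{it\Delta_D}u_0\|_{L^4L^\infty}\lesssim \|P_Nu_0\|_{\dot H^1}.
\]
This follows from Step 1 by scaling.

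\textbf{Step 4: summing over frequencies (main obstacle).} The dyadic pieces must then be summed. A square-function argument would require $\ell^2$ summability in $L^\infty$, which is false. The resolution in \cite{BSS} uses the $L^4$ time norm: the pieces are summed by exploiting that $\ell^2\to L^4_t$ loses nothing via a $TT^*$ argument over frequency-localised pieces. Equivalently, one can use the endpoint-type bound $\|u\|_{L^4_tL^\infty_x}\lesssim\|u\|_{L^4_t\dot B^{3/q}_{q,2}}$ for some $q<\infty$.

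\textbf{Step 5: near the obstacle versus away from it.} The delicate part is to justify Bernstein's inequality and the Littlewood--Paley theory for $\Delta_D$ up to $\partial\Omega$. Away from the obstacle, $(1-\chi)e^{it\Delta_D}u_0$ reduces to the free estimate, and Proposition \ref{st1} handles it. Near the obstacle, local smoothing \eqref{l12} gives $\chi e^{it\Delta_D}u_0\in L^2(0,T;H^{3/2}_D)$. This is weaker than $L^4L^\infty$, which is exactly why \cite{BSS} is needed. Their proof uses short-time Strichartz estimates on compact manifolds with boundary, gained through a reflection/doubling construction over time intervals of length $N^{-1}$, combined with local smoothing. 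The result is a loss-free estimate near $\partial\Theta$ in the range $p\ge 4$ in dimension three.

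I expect this boundary part to be the main obstacle, and I would cite it from \cite{BSS} rather than redo it.
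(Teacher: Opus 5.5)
Falling back on \cite{BSS} is exactly what the paper does. It gives no argument for \eqref{L4Linf3} beyond the citation, and Remark~\ref{rem_sec2} only records that \cite[Section 5]{BSS} deduces it from scale-invariant Strichartz estimates by a Littlewood--Paley argument. The trouble is your sketch of that deduction, which breaks down at the step you yourself identify as the crux.

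The bound $\|u\|_{L^4_tL^\infty_x}\lesssim\|u\|_{L^4_t\dot B^{3/q}_{q,2}}$ is false for every $q<\infty$. Since the $L^4_t$ norm sits outside, it amounts (take $u$ constant in time) to the fixed-time embedding $\dot B^{3/q}_{q,2}\hookrightarrow L^\infty$. Only the $\ell^1$ space $\dot B^{3/q}_{q,1}$ embeds; for example, $\dot B^{3/2}_{2,2}=\dot H^{3/2}(\mathbb{R}^3)\not\hookrightarrow L^\infty$.

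Nor does a $TT^*$ argument sum the pieces for free. In $\mathbb{R}^3$ the $L^4_tL^\infty_x$ bound does follow from $TT^*$, but through the dispersive estimate and Hardy--Littlewood--Sobolev in time. No dispersive estimate is known for $e^{it\Delta_D}$ outside a general non-trapping obstacle.

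For the same reason, Proposition~\ref{st1} does not dispose of the region away from the obstacle in Step 5. With $p=4$ it gives $L^4_tW^{1,3}_x$, which runs into the same $W^{1,3}\not\hookrightarrow L^\infty$ obstruction you noted in Step 2.

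What the $L^4_t$ exponent actually buys is an off-diagonal gain, and exploiting it needs more than the single bound of Step 3. That bound comes from Bernstein and the pair $(4,3)$ itself, not from an off-endpoint pair or from scaling. Summed by the triangle inequality, it only controls the $B^1_{2,1}$ norm, not the $H^1$ norm.

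The standard way to get the $\ell^2$ sum uses
$\|P_Ne^{it\Delta_D}u_0\|_{L^p_tL^\infty_x}\lesssim N^{\frac32-\frac2p}\|P_Nu_0\|_{L^2}$
for time exponents $p$ on \emph{both} sides of $4$. These come from Bernstein and scale-invariant estimates at admissible pairs around $(4,3)$, so that pair must be interior to the available range. Set $f_N(t):=\|P_Ne^{it\Delta_D}u_0\|_{L^\infty}$ and $b_N:=N\|P_Nu_0\|_{L^2}$. Then:
\begin{itemize}
\item expand $\|\sum_Nf_N\|^4_{L^4(0,T)}$ over $N_1\le N_2\le N_3\le N_4$;
\item apply H\"older with exponents $(p_+,4,4,r)$, where $p_+>4$ and $\frac1r=\frac12-\frac1{p_+}$, so $r<4$;
\item each term is then bounded by $(N_1/N_4)^{\varepsilon}\,b_{N_1}b_{N_2}b_{N_3}b_{N_4}$ with $\varepsilon=\frac12-\frac2{p_+}>0$;
\item a Schur-type summation gives the bound $\lesssim(\sum_Nb_N^2)^2\lesssim\|u_0\|^4_{H^1_0}$.
\end{itemize}
Citing \cite{BSS} for this step, as the paper does, is fine. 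The justification you wrote in its place would not prove \eqref{L4Linf3}.
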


\begin{remark}\label{rem_sec2}
The loss in \eqref{main_1} comes from the region near the obstacle, where one only uses \eqref{l12} and Sobolev embeddings. Scale-invariant Strichartz estimates, in a restricted range of exponents, hold for general non-trapping obstacles \cite[Theorem 1.1]{BSS}; estimate \eqref{L4Linf3} is deduced from them in \cite[Section 5]{BSS} by a Littlewood--Paley argument, and is used there to give a simple proof of the well-posedness of the energy-critical equation in $H^1_0(\Omega)$, first obtained by Ivanovici and Planchon \cite{IP}. The case of Neumann boundary conditions can be handled similarly \cite[Remark 1.2]{BGT1}, with constants depending on $T$ in the local smoothing estimates \cite[Remarks 2.3 and 2.9]{BGT1}.
\end{remark}

\section{Internal controllability around the zero solution\label{Sec3}}
\subsection{Linear result}\label{sec:linear} We first fix the assumptions and the notation used in this subsection. Throughout, we assume:
\begin{equation}\tag{H}
\boxed{
  \parbox{0.82\textwidth}{
    $n\geq 2$, $\Theta\subset\mathbb{R}^n$ is a nonempty compact set with smooth boundary, $\Theta$ is \emph{non-trapping}, and $\Omega:=\mathbb{R}^n\setminus\Theta$ is connected.
  }
}
\end{equation}

Fix $R>0$ with $\Theta\subset B_R(0)$. Let $\varphi\in C^\infty(\mathbb{R}^n;[0,1])$ be such that
\begin{equation}\label{phi}
\varphi(x)=0 \ \text{ if } |x|\leq R,\qquad \varphi(x)>0\ \text{ if } |x|>R,\qquad \varphi(x)=1\ \text{ if } |x|\geq R+1,
\end{equation}
and set $\chi:=1-\varphi\in C^\infty_0(\mathbb{R}^n)$, so that $\operatorname{supp}\chi\subset \overline{B_{R+1}(0)}$. The control region is
\begin{equation}\label{omega}
\omega:=\{x\in\Omega:\ |x|>R\}=\{x\in\Omega:\ \varphi(x)>0\}.
\end{equation}
We also fix $\psi\in C_0^\infty(\mathbb{R}^n;[0,1])$ with $\psi=1$ on $B_{R+2}(0)$ and $\operatorname{supp}\psi\subset B_{R+3}(0)$. In particular
\begin{equation}\label{psi}
0\leq\chi\leq\psi \quad\text{and}\quad \psi=1 \text{ on } \operatorname{supp}\nabla\varphi .
\end{equation}
We write $\Omega':=\Omega\cap B_{R+3}(0)$, which is a bounded open set with smooth boundary $\partial\Theta\cup\partial B_{R+3}(0)$.

Let $\Delta_D$ be the Dirichlet Laplacian on $\Omega$. For $\sigma\in\mathbb{R}$, $H^\sigma_D(\Omega)$ denotes the completion of $\mathcal{D}((1-\Delta_D)^{\infty})$ for the norm $\|u\|_{H^\sigma_D}:=\|(1-\Delta_D)^{\sigma/2}u\|_{L^2(\Omega)}$; for $\sigma\geq 0$ this is the domain of $(1-\Delta_D)^{\sigma/2}$, $H^1_D(\Omega)=H^1_0(\Omega)$, and $H^{-\sigma}_D(\Omega)$ is identified with the (anti)dual of $H^\sigma_D(\Omega)$ through the sesquilinear pairing $$\langle f,g\rangle=\int_\Omega f\overline{g}\,dx.$$ We set
$$
\mathcal{R}:=(1-\Delta_D)^{-1},
$$
which is an isometric isomorphism from $H^{\sigma}_D(\Omega)$ onto $H^{\sigma+2}_D(\Omega)$ for every $\sigma\in\mathbb{R}$. The group $e^{it\Delta_D}$ is unitary on every $H^\sigma_D(\Omega)$ and commutes with $\mathcal{R}$; in particular
\begin{equation}\label{H1}
\|e^{it\Delta_D}v_0\|_{H^\sigma_D(\Omega)}=\|v_0\|_{H^\sigma_D(\Omega)},\qquad t\in\mathbb{R},\ \sigma\in\mathbb{R}.
\end{equation} Since $\varphi$, $\chi$ and $\psi$ are real-valued, smooth, bounded with bounded derivatives, and constant near $\partial\Omega$, multiplication by any of them is bounded on $H^\sigma_D(\Omega)$ for $\sigma\in[0,2]$ (interpolation between $L^2$, $H^1_0$ and $H^2\cap H^1_0$) and, by duality, for $\sigma\in[-2,0]$.

For $v_0\in H^\sigma_D(\Omega)$ we always denote $v(t):=e^{it\Delta_D}v_0$, that is, $v$ solves
\begin{equation}\label{adj}
\begin{cases}
iv_t+\Delta v=0, & x\in\Omega,\ t\in\mathbb{R},\\
v(x,t)=0, & x\in\partial\Omega,\ t\in\mathbb{R},\\
v(x,0)=v_0(x), & x\in\Omega .
\end{cases}
\end{equation}

In this subsection, the geometry of $\Theta$ enters only through the following local smoothing effect, due to Burq, G\'erard and Tzvetkov.

\begin{proposition}[{\cite[Proposition 2.7]{BGT1}}]\label{smoothing}
Assume \textup{(H)}. For every $\eta\in C^\infty_0(\mathbb{R}^n)$ and every $s\in[0,1]$ there exists $C>0$ such that, for every $T>0$,
\begin{equation}\label{smooth_est}
\int_0^T\|\eta\, e^{it\Delta_D}v_0\|^2_{H^{s+1/2}_D(\Omega)}\,dt\leq C\|v_0\|^2_{H^s_D(\Omega)},\qquad \forall\, v_0\in H^s_D(\Omega).
\end{equation}
\end{proposition}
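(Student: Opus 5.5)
This estimate is essentially \eqref{l12} of Proposition \ref{p5}, restated with a general cut-off $\eta$ and with the constant explicitly uniform in $T$. My plan is to deduce it from the cut-off resolvent bound \eqref{l9} by the standard Kato smoothing argument. A direct, more careful route is to note that \eqref{l12} holds for the fixed $\chi$ of Proposition \ref{p5}, and that $\eta$ can always be dominated: choose $\chi_1\in C_0^\infty$ equal to $1$ on $\operatorname{supp}\eta$. Then $\eta=\eta\chi_1$, and multiplication by $\eta$ is bounded on $H^{s+1/2}_D(\Omega)$ for $s+1/2\in[1/2,3/2]\subset[0,2]$, as noted above. So it suffices to prove the estimate for $\chi_1$.

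To prove it for a given cut-off $\chi_1$, I first extend the integral to $t\in\mathbb{R}$, so that the constant cannot depend on $T$.

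\textbf{Kato argument.} The bound
$$\int_{\mathbb{R}}\|\chi_1 e^{it\Delta_D}v_0\|^2_{H^{s+1/2}_D}\,dt\le C\|v_0\|^2_{H^s_D}$$
is, by a $TT^*$ argument and Plancherel in time, equivalent to a uniform bound on the imaginary part of the resolvent sandwiched by $\chi_1$:
$$\sup_{\tau,\varepsilon}\|\chi_1\,\mathrm{Im}\,(-\Delta_D-\tau\mp i\varepsilon)^{-1}\chi_1\|_{H^{s-1/2}_D\to H^{s+1/2}_D}<\infty .$$
This gain of one full derivative is exactly \eqref{l9} with $\sigma=s-1/2\in[-1/2,1/2]\subset[-1,1]$, after writing $\tau=\kappa^2$. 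For $\tau<0$ the resolvent is elliptic, and the bound is trivial there, including at low frequency.

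Concretely, I apply Fourier transform in $t$ of $\mathbf 1_{t>0}e^{-\varepsilon t}e^{it\Delta_D}v_0$, which gives the resolvent applied to $v_0$. I then pair with $\chi_1 g$ for $g\in L^2(\mathbb{R};H^{-s-1/2}_D)$ and use Cauchy--Schwarz. In the $TT^*$ form, the operator to control is
$$\int\!\!\int \chi_1 e^{i(t-r)\Delta_D}\chi_1 g(r)\,dr,$$
and it is bounded using \eqref{l9} for both signs $\pm i\varepsilon$, then letting $\varepsilon\to0$.

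\textbf{Bookkeeping of Sobolev indices.} The $TT^*$ operator maps $H^{-(s+1/2)}$-type duals through $H^s_D$, which is where the symmetric loss and gain of $1/2$ derivative appears. Because the exponent is not symmetric about $0$ except when $s=0$, I will do the cases $s=0$ and $s=1$ and then interpolate in $s$. For $s=1$ I commute with $(1-\Delta_D)^{1/2}$. Since $\chi_1$ does not commute with this operator, I instead handle $s=1$ by applying the $s=0$ bound to $\Delta_D v_0$. This is combined with ellipticity near $\operatorname{supp}\chi_1$ and the identity $e^{it\Delta_D}\Delta_D v_0=\Delta_D e^{it\Delta_D}v_0$, which gives control of $\chi_1$ times a function in $H^{3/2}$.

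I expect the main obstacle to be this index bookkeeping, in particular the fact that the cut-off does not commute with the fractional powers. If it becomes messy, I will simply fall back on \eqref{l12} as quoted from \cite[Proposition 2.7]{BGT1}, whose constant is already uniform in $T$ because the underlying estimate is global in time, combined with the domination $\eta=\eta\chi_1$ described above.
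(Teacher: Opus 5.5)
Your fallback is exactly the paper's treatment. Proposition~\ref{smoothing} is not proved in the paper; it is quoted from \cite[Proposition 2.7]{BGT1}, and it is \eqref{l12} verbatim with $\chi$ renamed $\eta$. Since \eqref{l12} is already stated for \emph{every} $\chi\in C^\infty_0(\mathbb{R}^n)$, with a constant independent of $T$, your domination $\eta=\eta\chi_1$ is harmless but not needed.

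Your self-contained Kato route, however, is correct as written only for $s=0$.

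\textbf{The case $s=0$.} Here $TT^*$ and Plancherel in $t$ reduce \eqref{smooth_est} to a uniform bound on $\chi_1\,\delta(-\Delta_D-\tau)\,\chi_1$ from $H^{-1/2}_D$ to $H^{1/2}_D$, where $\delta(-\Delta_D-\tau)=\pi^{-1}\operatorname{Im}(-\Delta_D-\tau-i0)^{-1}$. This is \eqref{l9} with $s=-1/2$.

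\textbf{The equivalence for $s\neq0$.} The reduction you state is not what $TT^*$ gives. The adjoint for the $H^s_D$ structure inserts $(1-\Delta_D)^{-s}$ between the two cut-offs. The actual condition is therefore $\sup_{\tau>0}(1+\tau)^{-s}\|\chi_1\,\delta(-\Delta_D-\tau)\,\chi_1\|_{H^{-s-1/2}_D\to H^{s+1/2}_D}<\infty$. Deducing this from \eqref{l9} again requires moving powers of $1-\Delta_D$ through $\chi_1$.

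\textbf{The repair at $s=1$.} This step does not produce $s=1$. The $s=0$ bound applies to $\Delta_Dv_0$ only when $\Delta_Dv_0\in L^2$, i.e.\ $v_0\in H^2_D$; for $v_0\in H^1_0$ you only have $\Delta_Dv_0\in H^{-1}_D$. So what you get is an $s=2$ estimate. Its target is $H^{5/2}(\Omega)$, not $H^{5/2}_D(\Omega)$, because cut-offs do not preserve the compatibility condition on $\partial\Omega$.

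\textbf{How to close the route.} Set $u=e^{it\Delta_D}v_0$ and take $\chi_1=1$ near $\Theta$.
\begin{enumerate}
\item Prove $\int_{\mathbb{R}}\|\chi_1u\|^2_{H^{5/2}(\Omega)}\,dt\le C\|v_0\|^2_{H^2_D}$ by elliptic regularity for $\Delta(\chi_1u)=\chi_1e^{it\Delta_D}\Delta_Dv_0+[\Delta,\chi_1]u$.
\item Control the commutator by a two-step bootstrap from the $s=0$ bound, using nested cut-offs. Bounding it by $\|u(t)\|_{H^2}$ instead would cost a factor $T$.
\item Interpolate between $L^2\to L^2_tH^{1/2}_D$ and $H^2_D\to L^2_t(H^{5/2}\cap H^1_0)$. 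Embedding the midpoint space into $H^{3/2}\cap H^1_0=H^{3/2}_D$ gives $s=1$.
\item Interpolate between $s=0$ and $s=1$ for the remaining values.
\end{enumerate}
As written, only the citation covers $s\in(0,1]$.
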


We shall work with the set of indices
$$
\mathcal{I}:=[-2,2),
$$
and, for $\sigma\in\mathcal I$, we put $k(\sigma):=0$ if $\sigma\geq 0$, $k(\sigma):=1$ if $\sigma<0$, so that $\tau:=\sigma+2k(\sigma)\in[0,2)$. We also put $\rho(\sigma):=\sigma+1$ if $\sigma\in(-1,0)$ and $\rho(\sigma):=0$ otherwise; thus $\rho(\sigma)=\tau-1\in(0,1)$ if $\sigma\in(-1,0)$. For $v_0\in H^\sigma_D(\Omega)$ we define the lower order quantity
\begin{equation}\label{Qsigma}
Q_\sigma(v_0):=\int_0^T\big\|\psi\,\mathcal{R}^{k(\sigma)}v(t)\big\|^2_{H^{\rho(\sigma)}_D(\Omega)}\,dt .
\end{equation}
Note that $$\mathcal{R}^{k(\sigma)}v(t)=e^{it\Delta_D}\mathcal{R}^{k(\sigma)}v_0$$ and $$\|\mathcal R^{k(\sigma)}v_0\|_{H^\tau_D}=\|v_0\|_{H^\sigma_D}.$$ Except for $\sigma\in(-1,0)$, $Q_\sigma$ is an $L^2$ quantity; the norm of $H^{\rho(\sigma)}_D(\Omega)$, which is still weaker than that of $H^\tau_D(\Omega)$, is only needed for $\sigma\in(-1,0)$ (see the proof of Lemma \ref{relaxed}). The endpoint $\sigma=2$ will be deduced from the case $\sigma=0$ in Proposition \ref{obs}.

We divide the proof of Theorem \ref{lin_th} into five steps.
\vspace{0.1cm}

\noindent\textbf{Step 1.} \textit{Compactness of $Q_\sigma$.}

\begin{lemma}\label{compact}
Assume \textup{(H)}, let $T>0$ and $\sigma\in\mathcal I$. If $v_{j,0}\rightharpoonup v_0$ weakly in $H^\sigma_D(\Omega)$, then $$Q_\sigma(v_{j,0}-v_0)\to0.$$
\end{lemma}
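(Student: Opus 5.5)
Set $w_{j,0}:=\mathcal R^{k(\sigma)}(v_{j,0}-v_0)$. Since $\mathcal R^{k(\sigma)}$ is an isometric isomorphism from $H^\sigma_D(\Omega)$ onto $H^\tau_D(\Omega)$, we have $w_{j,0}\rightharpoonup0$ weakly in $H^\tau_D(\Omega)$, with $\tau\in[0,2)$, and $\|w_{j,0}\|_{H^\tau_D}\le M$ uniformly in $j$. Write $w_j(t)=e^{it\Delta_D}w_{j,0}$. The claim is then
$$
\int_0^T\|\psi w_j(t)\|^2_{H^{\rho}_D(\Omega)}\,dt\to0,
$$
where $\rho=\rho(\sigma)<\tau$. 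In every case $\rho\le\max(0,\tau-1)$, and $\rho<\tau$ strictly; note that $\rho=0$ also when $\tau=0$. The plan is to show that $(\psi w_j)$ is bounded in a space compactly embedded in $L^2(0,T;H^\rho_D(\Omega'))$, and that it converges to $0$ in the sense of distributions. Bounded sequences in such a space are relatively compact, and the only possible limit is $0$, so the whole sequence converges strongly to $0$.

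\textbf{Spatial regularity gain.} I apply Proposition \ref{smoothing} with $\eta=\psi$ to $w_{j,0}$ at level $s=\min(\tau,1)$. This gives a bound for $\psi w_j$ in $L^2(0,T;H^{s+1/2}_D)$ by $CM$. When $\tau\in[1,2)$, the level $s=1$ already yields $H^{3/2}$, which is more than the required $H^{\rho}$ with $\rho=0$. When $\tau\in(0,1)$ we have $\rho\le\tau<\tau+1/2$. So in all cases $\psi w_j$ is bounded in $L^2(0,T;H^{\rho+\epsilon}(\Omega'))$ for some $\epsilon>0$. Here $\Omega'$ is bounded because $\psi$ is supported in $B_{R+3}$, and on $\operatorname{supp}\psi$ the $H^\rho_D$ and $H^\rho$ norms are comparable for $\rho\in[0,1)$.

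\textbf{Time regularity.} From the equation, $\partial_t(\psi w_j)=i\psi\Delta_D w_j$. Since $w_j$ is bounded in $L^\infty(0,T;H^\tau_D)$ and multiplication by $\psi$ is bounded on $H^{\tau-2}_D$ for $\tau-2\in[-2,0)$, the sequence $\partial_t(\psi w_j)$ is bounded in $L^\infty(0,T;H^{\tau-2}_D(\Omega))$. Its supports lie in $B_{R+3}$.

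\textbf{Compactness and the limit.} The restriction to $\Omega'$ maps $H^{\rho+\epsilon}$ compactly into $H^\rho$ by Rellich on the bounded smooth domain $\Omega'$, and $H^\rho$ embeds continuously into a large negative Sobolev space. The Aubin--Lions--Simon lemma then gives relative compactness of $(\psi w_j)$ in $L^2(0,T;H^\rho(\Omega'))$. Since functions supported in $B_{R+3}$ can be handled with a slightly larger cut-off, the norms are equivalent and this transfers to $H^\rho_D$. To identify the limit, take a test function $\theta\in L^2(0,T;H^{-\rho}_D)$. Then
$$
\int_0^T\langle \psi w_j,\theta\rangle\,dt=\int_0^T\langle w_{j,0},e^{-it\Delta_D}\psi\theta\rangle\,dt\to0,
$$
by weak convergence for each fixed $t$ together with dominated convergence. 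Hence every limit point vanishes.

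The main obstacle is the bookkeeping for $\sigma\in(-1,0)$, that is $\tau\in(1,2)$ and $\rho=\tau-1$. There one must make sure the smoothing gain is genuinely of higher order than $\rho$. I expect this to hold because the smoothing estimate at level $s=1$ gives $H^{3/2}$, while $\rho<1$. A secondary point to check is the equivalence of the $H^\rho_D$ norm with a localized $H^\rho(\Omega')$ norm near $\partial\Theta$ when $\rho\ge1/2$. For $\rho<3/2$ this is fine, since the Dirichlet condition is the only constraint and $\psi w_j$ satisfies it.
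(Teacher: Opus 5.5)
Your proof is correct in structure, but it treats the case $\rho(\sigma)>0$ differently from the paper.

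\textbf{The paper's route.} It applies Aubin--Lions--Simon only at the $L^2$ level. It uses \eqref{smooth_est} with $s=0$ for every $\sigma$ and obtains $\psi y_j\to0$ in $L^2(0,T;L^2(\Omega))$. For $\sigma\in(-1,0)$ it then upgrades to $H^{\rho}_D$ by the spectral interpolation $\|f\|_{H^\rho_D}\le\|f\|_{L^2}^{1-\rho/\tau}\|f\|_{H^\tau_D}^{\rho/\tau}$. This is combined with the uniform bound $\|\psi y_j(t)\|_{H^\tau_D}\le CM$, which comes from unitarity and boundedness of multiplication by $\psi$ on $H^\tau_D$, and with H\"older's inequality in time.

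\textbf{Your route.} You use the stronger smoothing at level $s=1$, which is available precisely because $\tau\ge1$ whenever $\rho>0$. This bounds $\psi w_j$ in $L^2(0,T;H^{3/2})$, and you apply Aubin--Lions--Simon directly with target $H^\rho(\Omega')$.

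\textbf{What each buys.} Your approach is uniform in $\sigma$ and needs no separate interpolation step. In exchange, it forces you to pass between the localized classical space $H^\rho(\Omega')$ and the spectral space $H^\rho_D(\Omega)$. The paper's upgrade stays inside the scale $H^\theta_D$ and only uses the trivial inclusion $H^{1/2}_D(\Omega)\hookrightarrow H^{1/2}(\Omega')$ for localized functions. Your identification of the limit (pointwise weak convergence in $t$, dominated convergence, and $H^{-\rho}_D\subset H^{-\tau}_D$) is equivalent to the paper's, which moves the time integral inside the pairing.

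\textbf{An error in the transfer step.} You claim that the $H^\rho_D$ and $H^\rho$ norms are comparable on functions supported in $\operatorname{supp}\psi$ for every $\rho\in[0,1)$. This is false at $\rho=\tfrac12$, i.e.\ for $\sigma=-\tfrac12$. Indeed $H^{1/2}_D(\Omega)=[L^2(\Omega),H^1_0(\Omega)]_{1/2}$ is the Lions--Magenes space $H^{1/2}_{00}$. Its norm also controls $\int|u|^2/\operatorname{dist}(x,\partial\Theta)\,dx$ near the obstacle, so it is strictly stronger than the $H^{1/2}$ norm, even though no trace condition is imposed. Hence convergence in $L^2(0,T;H^{1/2}(\Omega'))$ does not by itself give convergence in $L^2(0,T;H^{1/2}_D(\Omega))$.

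\textbf{How to repair it.} Either of the following works with what you already have.
\begin{enumerate}
\item Run Aubin--Lions--Simon with target $H^{\rho'}(\Omega')$ for some $\rho'\in(\max(\rho,\tfrac12),1)$. There the $H^{\rho'}_D$ norm is equivalent to the $H^{\rho'}$ norm on localized functions vanishing on $\partial\Theta$, and you conclude with $H^{\rho'}_D\hookrightarrow H^\rho_D$.
\item Combine the strong $L^2(0,T;L^2)$ convergence with your $L^2(0,T;H^{3/2}_D)$ bound, via $\|f\|_{H^\rho_D}\le\|f\|_{L^2}^{1-2\rho/3}\|f\|_{H^{3/2}_D}^{2\rho/3}$ and H\"older in time. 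This is just the paper's interpolation step with a different upper endpoint.
\end{enumerate}
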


\begin{proof}
Set $k=k(\sigma)$, $\tau=\sigma+2k\in[0,2)$, $\rho=\rho(\sigma)$, $y_{j,0}:=\mathcal R^k(v_{j,0}-v_0)$ and $y_j(t):=e^{it\Delta_D}y_{j,0}$. Then $y_{j,0}\rightharpoonup0$ weakly in $H^\tau_D(\Omega)$, hence weakly in $L^2(\Omega)$, and $\|y_{j,0}\|_{H^\tau_D}\leq M$ for some $M$; in particular $\|y_{j,0}\|_{L^2}\leq M$.

\medskip
\noindent\textit{(a) Bounds.} By Proposition \ref{smoothing} with $s=0$ and $\eta=\psi$,
$$
\int_0^T\|\psi y_j(t)\|^2_{H^{1/2}(\Omega')}\,dt\leq C\int_0^T\|\psi y_j(t)\|^2_{H^{1/2}_D(\Omega)}\,dt\leq CM^2 .
$$
Moreover $\partial_t(\psi y_j)=i\psi\Delta_D y_j$, and for $\zeta\in C^\infty_0(\Omega')$ we have $\psi\zeta\in H^2(\Omega)\cap H^1_0(\Omega)$, so that
$$
|\langle \psi\Delta_Dy_j(t),\zeta\rangle|=|\langle y_j(t),\Delta(\psi\zeta)\rangle|\leq C\|y_j(t)\|_{L^2}\|\zeta\|_{H^2(\Omega')}\leq CM\|\zeta\|_{H^2(\Omega')} .
$$
Hence $(\partial_t(\psi y_j))_j$ is bounded in $L^\infty(0,T;H^{-2}(\Omega'))$, where $H^{-2}(\Omega'):=(H^2_0(\Omega'))'$.

\medskip
\noindent\textit{(b) Compactness.} Since $\Omega'$ is bounded and smooth, $H^{1/2}(\Omega')\hookrightarrow L^2(\Omega')$ is compact and $L^2(\Omega')\hookrightarrow H^{-2}(\Omega')$ is continuous. By the Aubin--Lions--Simon lemma \cite{Simon}, $(\psi y_j)_j$ is relatively compact in $L^2(0,T;L^2(\Omega'))$. As $\operatorname{supp}\psi\subset B_{R+3}(0)$, this is the same as relative compactness in $L^2(0,T;L^2(\Omega))$.

\medskip
\noindent\textit{(c) Identification of the limit.} For $g\in L^2(0,T;L^2(\Omega))$,
$$
\int_0^T\langle\psi y_j(t),g(t)\rangle\,dt=\Big\langle y_{j,0},\int_0^Te^{-it\Delta_D}(\psi g(t))\,dt\Big\rangle\longrightarrow0,
$$
since $$\int_0^Te^{-it\Delta_D}(\psi g)\,dt\in L^2(\Omega).$$ Thus $\psi y_j\rightharpoonup0$ weakly in $L^2(0,T;L^2(\Omega))$, and by (b) every subsequence has a further subsequence converging strongly to $0$. Therefore $\psi y_j\to0$ in $L^2(0,T;L^2(\Omega))$, which is the claim if $\rho=0$.

\medskip
\noindent\textit{(d) The case $\rho>0$.} Then $\sigma\in(-1,0)$, $\tau\in(1,2)$ and $\rho=\tau-1$. Since multiplication by $\psi$ is bounded on $H^\tau_D(\Omega)$ and $e^{it\Delta_D}$ is unitary on this space, $\|\psi y_j(t)\|_{H^\tau_D}\leq CM$ for all $t$ and $j$. The spectral theorem and H\"older's inequality give $\|f\|_{H^\rho_D}\leq\|f\|^{1-\rho/\tau}_{L^2}\|f\|^{\rho/\tau}_{H^\tau_D}$, and H\"older's inequality in time yields
$$
Q_\sigma(v_{j,0}-v_0)=\int_0^T\|\psi y_j(t)\|^2_{H^\rho_D}\,dt\leq(CM)^{\frac{2\rho}\tau}\,T^{\frac\rho\tau}\Big(\int_0^T\|\psi y_j(t)\|^2_{L^2}\,dt\Big)^{1-\frac\rho\tau}\longrightarrow0 .
$$
\end{proof}

\vspace{0.1cm}

\noindent\textbf{Step 2.} \textit{Observability up to a compact term.}

\begin{lemma}\label{relaxed}
Assume \textup{(H)} and let $T>0$, $\sigma\in\mathcal I$. There exists $C>0$ such that
\begin{equation}\label{relaxed_est}
\|v_0\|^2_{H^\sigma_D(\Omega)}\leq C\left(\int_0^T\|\varphi v(t)\|^2_{H^\sigma_D(\Omega)}\,dt+Q_\sigma(v_0)\right),\qquad\forall\,v_0\in H^\sigma_D(\Omega).
\end{equation}
\end{lemma}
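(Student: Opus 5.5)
The plan is to combine conservation of the $H^\sigma_D$ norm with the splitting $1=\varphi+\chi$. This reduces \eqref{relaxed_est} to bounding $\int_0^T\|\chi v\|^2_{H^\sigma_D}\,dt$ by a small multiple of $\|v_0\|^2_{H^\sigma_D}$ plus $C\,Q_\sigma(v_0)$. By \eqref{H1},
$$
T\|v_0\|^2_{H^\sigma_D}=\int_0^T\|v(t)\|^2_{H^\sigma_D}\,dt\leq 2\int_0^T\|\varphi v\|^2_{H^\sigma_D}\,dt+2\int_0^T\|\chi v\|^2_{H^\sigma_D}\,dt ,
$$
so only the compactly supported piece $\chi v$ has to be estimated. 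I would first treat $\sigma\in[0,2)$, where $k(\sigma)=0$, and then reduce $\sigma\in[-2,0)$ to that case through $\mathcal R$.

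\textbf{Case $\sigma\in[0,1]$.} For $\sigma=0$ the bound is immediate: since $0\le\chi\le\psi$ by \eqref{psi}, we have $\int_0^T\|\chi v\|^2_{L^2}\le Q_0(v_0)$. For $\sigma\in(0,1]$ I interpolate $H^\sigma_D$ between $L^2$ and $H^{\sigma+1/2}_D$:
$$
\|\chi v\|^2_{H^\sigma_D}\le\varepsilon\|\chi v\|^2_{H^{\sigma+1/2}_D}+C_\varepsilon\|\chi v\|^2_{L^2}.
$$
Proposition \ref{smoothing} with $s=\sigma$ and $\eta=\chi$ bounds the time integral of the first term by $C\varepsilon\|v_0\|^2_{H^\sigma_D}$, and $\chi\le\psi$ bounds the second by $C_\varepsilon Q_\sigma(v_0)$. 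Choosing $\varepsilon$ small with respect to $T$ lets me absorb the first term on the left.

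\textbf{Case $\sigma\in[1,2)$.} I expect this to be the main obstacle, because Proposition \ref{smoothing} only gives the gain of $1/2$ derivative for $s\le 1$. My first attempt would be to use the elliptic structure. On the support of $\chi$ one has $\Delta(\chi v)=\chi\Delta_Dv+[\Delta,\chi]v$ with $\Delta_D v=-iv_t$, and $[\Delta,\chi]v=-2\nabla\varphi\cdot\nabla v-(\Delta\varphi)v$ is supported where $\psi=1$, away from $\partial\Theta$. Writing $\chi v$ through Duhamel's formula, I would then use the inhomogeneous smoothing estimate \eqref{l11} with $s=\sigma-1\in[0,1)$, which produces an $H^{\sigma}_D$-bound with a small factor plus $\|\psi v\|_{L^2H^{\sigma-1/2}}$. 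The latter is again interpolated between $L^2$ (giving $Q_\sigma$) and the level $\sigma+\frac12$ controlled by \eqref{smooth_est} applied to $(1-\Delta_D)^{1/2}v_0$ at $s=\sigma-1$. If commutators with $(1-\Delta_D)^{1/2}$ prove troublesome, the fallback is to pass to $\mathcal R v_0\in H^{\sigma+2}_D$ and argue as in the negative case below, but in reverse.

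\textbf{Case $\sigma\in[-2,0)$.} Here $k=1$. Set $y_0=\mathcal Rv_0\in H^\tau_D$ with $\tau=\sigma+2\in[0,2)$, so that $\|y_0\|_{H^\tau_D}=\|v_0\|_{H^\sigma_D}$, and apply the previous cases to $y$. This requires comparing $\|\varphi y\|_{H^\tau_D}$ with $\|\varphi v\|_{H^\sigma_D}$. Since $v=(1-\Delta_D)y$ and $\varphi$ vanishes near $\partial\Omega$,
$$
\varphi v=(1-\Delta_D)(\varphi y)+2\nabla\varphi\cdot\nabla y+(\Delta\varphi)y ,
$$
and therefore
$$
\|\varphi y\|_{H^\tau_D}\le\|\varphi v\|_{H^\sigma_D}+C\|\psi y\|_{H^{\sigma+1}_D}.
$$
The commutator is first order and supported where $\psi=1$. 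For $\sigma\le-1$ its norm is bounded by $\|\psi y\|_{L^2}$, and for $\sigma\in(-1,0)$ by $\|\psi y\|_{H^{\rho(\sigma)}_D}$ with $\rho(\sigma)=\tau-1$. In both cases this is exactly the integrand of $Q_\sigma(v_0)$, which explains the definition of $\rho$. The same bound absorbs the $L^2$ remainders coming from the $\tau$-case, which then closes the estimate.
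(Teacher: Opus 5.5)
Your cases $\sigma\in[0,1]$ and $\sigma\in[-2,0)$ are sound; the latter is the paper's commutator argument through $\mathcal R$. The case $\sigma\in[1,2)$, however, is not proved. Because your negative case at $\sigma\in[-1,0)$ calls the positive case at $\tau=\sigma+2\in[1,2)$, the gap covers that range too. The three routes you sketch each fail:
\begin{itemize}
\item \emph{Duhamel.} Writing $i\partial_t(\chi v)+\Delta(\chi v)=[\Delta,\chi]v$ and applying \eqref{l11} with $s=\sigma-1$ bounds the Duhamel part of $\chi v$ in $L^2(0,T;H^\sigma_D)$ by $C\|[\Delta,\chi]v\|_{L^2(0,T;H^{\sigma-1}_D)}$. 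Since $[\Delta,\chi]$ is first order, this is of the size of $\|\psi v\|_{L^2(0,T;H^{\sigma}_D)}$, the very quantity you want to make small, and the constant is not small. The commutator eats exactly the derivative that \eqref{l11} gains, so no small factor appears.
\item \emph{The level $\sigma+\frac12$.} Estimate \eqref{smooth_est} applied to $(1-\Delta_D)^{1/2}v_0$ controls $\psi(1-\Delta_D)^{1/2}v$ in $L^2H^{\sigma-1/2}_D$, not $\psi v$ in $L^2H^{\sigma+1/2}_D$. Passing from one to the other needs a commutator estimate between $\psi$ and the nonlocal operator $(1-\Delta_D)^{1/2}$ on $\Omega$, which none of the available results provides.
\item \emph{The fallback.} Going up to $\mathcal R v_0\in H^{\sigma+2}_D$ lands at a level in $[3,4)$ where no estimate is at hand. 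Going down, i.e.\ applying your negative case to $(1-\Delta_D)v_0\in H^{\sigma-2}_D$, is circular, since that case is itself deduced from the positive case at $\tau=\sigma$.
\end{itemize}

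The missing idea is that the top endpoint is free. By unitarity on $H^2_D$ and boundedness of multiplication by $\chi$ there, $\|\chi e^{it\Delta_D}v_0\|_{L^2(0,T;H^2_D)}\le C\|v_0\|_{H^2_D}$. Interpolate the linear map $v_0\mapsto\chi e^{it\Delta_D}v_0$ (complex interpolation, parameter $a=\sigma-1$) between this bound and \eqref{smooth_est} with $s=1$, which maps $H^1_D$ into $L^2(0,T;H^{3/2}_D)$. This gives $\int_0^T\|\chi v\|^2_{H^{1+\sigma/2}_D}\,dt\le C\|v_0\|^2_{H^\sigma_D}$, and $1+\frac\sigma2>\sigma$ for every $\sigma<2$. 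Your own fixed-time interpolation of $\|\chi v(t)\|_{H^\sigma_D}$ between $L^2$ and $H^{1+\sigma/2}_D$ then closes exactly as in your case $\sigma\in(0,1]$. For $\sigma<\frac32$, \eqref{smooth_est} with $s=1$ together with $\|v_0\|_{H^1_D}\le\|v_0\|_{H^\sigma_D}$ already suffices.

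The paper uses the same two endpoint estimates but puts the gain on the data rather than at fixed time. It proves $\int_0^T\|\chi v\|^2_{H^\tau_D}\,dt\le C\|v_0\|^2_{H^\theta_D}$ with $\theta=\tau-\frac12$ on $[\frac12,\frac32]$ and $\theta=2\tau-2$ on $(\frac32,2)$. It then uses $\|v_0\|^2_{H^\theta_D}\le\varepsilon\|v_0\|^2_{H^\tau_D}+C_\varepsilon\|v_0\|^2_{L^2}$, and bounds $\|v_0\|^2_{L^2}$ by the $\varphi$- and $\psi$-terms via $L^2$ unitarity.
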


\begin{proof} We split the proof in two cases.

\medskip
\noindent\textit{1. Case $\sigma=\tau\in[0,2)$.}
\vspace{0.1cm}

By unitarity of $e^{it\Delta_D}$ on $H^\tau_D(\Omega)$ and $v=\varphi v+\chi v$,
\begin{equation}\label{rel1}
T\|v_0\|^2_{H^\tau_D}=\int_0^T\|v(t)\|^2_{H^\tau_D}\,dt\leq2\int_0^T\|\varphi v(t)\|^2_{H^\tau_D}\,dt+2\int_0^T\|\chi v(t)\|^2_{H^\tau_D}\,dt .
\end{equation}
If $\tau=0$, then by \eqref{psi} $\|\chi v\|_{L^2}\leq\|\psi v\|_{L^2}$, and \eqref{rel1} is \eqref{relaxed_est}. Let $\tau\in(0,2)$ and set
$$
\theta:=0\ \text{ if }\tau\in(0,\tfrac12),\qquad \theta:=\tau-\tfrac12\ \text{ if }\tau\in[\tfrac12,\tfrac32],\qquad \theta:=2\tau-2\ \text{ if }\tau\in(\tfrac32,2),
$$
so that $0\leq\theta<\tau$. We claim that
\begin{equation}\label{rel_chi}
\int_0^T\|\chi v(t)\|^2_{H^\tau_D}\,dt\leq C_1\|v_0\|^2_{H^\theta_D}.
\end{equation}
If $\tau\in[\frac12,\frac32]$ we apply Proposition \ref{smoothing} with $s=\tau-\frac12$ and $\eta=\chi$; if $\tau<\frac12$ we use $\|\chi v\|_{H^\tau_D}\leq\|\chi v\|_{H^{1/2}_D}$ and Proposition \ref{smoothing} with $s=0$. If $\tau\in(\frac32,2)$, we interpolate between the estimates
$$
\|\chi e^{it\Delta_D}v_0\|_{L^2(0,T;H^{3/2}_D)}\leq C\|v_0\|_{H^1_D},\qquad\|\chi e^{it\Delta_D}v_0\|_{L^2(0,T;H^2_D)}\leq C\|v_0\|_{H^2_D},
$$
the first one being Proposition \ref{smoothing} with $s=1$, and the second one a consequence of the unitarity of $e^{it\Delta_D}$ on $H^2_D(\Omega)$ and of the boundedness of multiplication by $\chi$ on this space. Complex interpolation with parameter $a=2\tau-3\in(0,1)$ gives \eqref{rel_chi}, since $[H^1_D,H^2_D]_a=H^{2\tau-2}_D$ and $[H^{3/2}_D,H^2_D]_a=H^{\tau}_D$.

Since $0\leq\theta<\tau$, the spectral theorem gives, for every $\varepsilon>0$, $$\|v_0\|^2_{H^\theta_D}\leq\varepsilon\|v_0\|^2_{H^\tau_D}+C_\varepsilon\|v_0\|^2_{L^2}.$$ Choosing $2C_1\varepsilon=T/2$ in \eqref{rel1} we obtain
$$
\frac T2\|v_0\|^2_{H^\tau_D}\leq2\int_0^T\|\varphi v(t)\|^2_{H^\tau_D}\,dt+2C_1C_\varepsilon\|v_0\|^2_{L^2}.
$$
Finally, by unitarity in $L^2$, $v=\varphi v+\chi v$ and \eqref{psi},
$$
\|v_0\|^2_{L^2}=\frac1T\int_0^T\|v(t)\|^2_{L^2}\,dt\leq\frac2T\left(\int_0^T\|\varphi v(t)\|^2_{H^\tau_D}\,dt+\int_0^T\|\psi v(t)\|^2_{L^2}\,dt\right),
$$
which yields \eqref{relaxed_est}, since $\rho(\sigma)=0$ in this case.

\medskip
\noindent\textit{2. Case $\sigma=\tau-2\in[-2,0)$.}
\vspace{0.1cm}

 Put $y:=\mathcal Rv=e^{it\Delta_D}y_0$, $y_0:=\mathcal Rv_0\in H^\tau_D(\Omega)$, so that $\|y_0\|_{H^\tau_D}=\|v_0\|_{H^\sigma_D}$. By the first case, applied to $y_0$,
\begin{equation}\label{rel2}
\|v_0\|^2_{H^\sigma_D}\leq C\left(\int_0^T\|\varphi y(t)\|^2_{H^\tau_D}\,dt+\int_0^T\|\psi y(t)\|^2_{L^2}\,dt\right).
\end{equation}
We compare $\varphi y=\varphi\mathcal Rv$ with $\mathcal R(\varphi v)$. Since $\varphi$ vanishes near $\partial\Omega$, for $u\in\mathcal D(\Delta_D)$ we have $\varphi u\in\mathcal D(\Delta_D)$ and $(1-\Delta_D)(\varphi u)-\varphi(1-\Delta_D)u=-[\Delta,\varphi]u$, where
$$
[\Delta,\varphi]u:=(\Delta\varphi)u+2\nabla\varphi\cdot\nabla u=2\operatorname{div}(u\nabla\varphi)-(\Delta\varphi)u .
$$
Applying $\mathcal R$ on both sides with $u=\mathcal R f$ and using density, we get $$\varphi\mathcal R f-\mathcal R(\varphi f)=-\mathcal R[\Delta,\varphi]\mathcal R f,$$ for $f\in H^\sigma_D(\Omega)$. For $u\in L^2(\Omega)$ and $\zeta\in H^1_0(\Omega)$,
$$
|\langle[\Delta,\varphi]u,\zeta\rangle|=\Big|-2\int_\Omega u\nabla\varphi\cdot\nabla\overline\zeta\,dx-\int_\Omega(\Delta\varphi)u\overline\zeta\,dx\Big|\leq C\|u\|_{L^2}\|\zeta\|_{H^1_0},
$$
so that $[\Delta,\varphi]$ is bounded from $L^2(\Omega)$ to $H^{-1}_D(\Omega)$. Being a first order operator with smooth coefficients supported in $\operatorname{supp}\nabla\varphi$, it is also bounded from $H^1_D(\Omega)$ to $L^2(\Omega)$, hence, by interpolation, from $H^\alpha_D(\Omega)$ to $H^{\alpha-1}_D(\Omega)$ for every $\alpha\in[0,1]$. Moreover $$[\Delta,\varphi]u=[\Delta,\varphi](\psi u),$$ because $\psi=1$ in a neighbourhood of $\operatorname{supp}\nabla\varphi$ by \eqref{psi}. If $\tau\in[0,1]$, then $\sigma\leq-1$ and $$\|[\Delta,\varphi]y\|_{H^\sigma_D}\leq\|[\Delta,\varphi](\psi y)\|_{H^{-1}_D}\leq C\|\psi y\|_{L^2};$$ if $\tau\in(1,2)$, we take $\alpha=\tau-1=\rho(\sigma)$ and obtain $\|[\Delta,\varphi]y\|_{H^\sigma_D}\leq C\|\psi y\|_{H^{\tau-1}_D}$. In both cases $\|[\Delta,\varphi]y\|_{H^\sigma_D}\leq C\|\psi y\|_{H^{\rho(\sigma)}_D}$, and since $\mathcal R:H^\sigma_D\to H^\tau_D$ is an isometry,
$$
\|\varphi y(t)\|_{H^\tau_D}\leq\|\varphi v(t)\|_{H^\sigma_D}+C\|\psi y(t)\|_{H^{\rho(\sigma)}_D}.
$$
Plugging this into \eqref{rel2}, and using $\|\psi y\|_{L^2}\leq\|\psi y\|_{H^{\rho(\sigma)}_D}$ and $$Q_\sigma(v_0)=\int_0^T\|\psi y(t)\|^2_{H^{\rho(\sigma)}_D}dt,$$ gives \eqref{relaxed_est}.
\end{proof}

\vspace{0.1cm}

\noindent\textbf{Step 3.} \textit{Unique continuation.}

For $T>0$ and $\sigma\in[-2,2]$, let
$$
\mathcal N^\sigma_T:=\left\{v_0\in H^\sigma_D(\Omega):\ \varphi\, e^{it\Delta_D}v_0=0\ \text{ for all } t\in[0,T]\right\}.
$$
It is a closed complex subspace of $H^\sigma_D(\Omega)$; moreover $\mathcal N^\sigma_{T'}\subset\mathcal N^\sigma_T$ whenever $T\leq T'$, and $\mathcal N^{\sigma'}_T\subset\mathcal N^\sigma_T$ whenever $\sigma\leq\sigma'$.

\begin{lemma}\label{UCP}
Assume \textup{(H)}. Then $\mathcal N^\sigma_T=\{0\}$ for every $T>0$ and every $\sigma\in[-2,2]$.
\end{lemma}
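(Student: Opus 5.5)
The plan is the classical compactness--uniqueness scheme of Bardos--Lebeau--Rauch: show that $\mathcal N^\sigma_T$ is finite dimensional, show that it is invariant under $\Delta_D$, and rule out eigenfunctions of $\Delta_D$ that vanish outside a ball.

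\textbf{Reduction to $\sigma=-2$.} Since $\mathcal N^{\sigma}_T\subset\mathcal N^{-2}_T$ for every $\sigma\in[-2,2]$, it suffices to prove $\mathcal N^{-2}_T=\{0\}$ for every $T>0$. Moreover $\mathcal N^{-2}_T\subset \mathcal N^{-2}_{T_1}$ for $T_1\le T$, so it is enough to find, for a given $T$, some $T_1\in(0,T]$ with $\mathcal N^{-2}_{T_1}=\{0\}$.

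\textbf{Step (a): finite dimension.} Fix $T'>0$ and take $v_0\in\mathcal N^{-2}_{T'}$. The observation term in Lemma \ref{relaxed} (with $\sigma=-2\in\mathcal I$) vanishes, so $\|v_0\|^2_{H^{-2}_D}\le C\,Q_{-2}(v_0)$ on $\mathcal N^{-2}_{T'}$. Let $(v_{j,0})$ be a sequence in the unit ball of $\mathcal N^{-2}_{T'}$. Extract a weakly convergent subsequence; its limit $v_0$ lies in $\mathcal N^{-2}_{T'}$, because this space is a closed subspace and hence weakly closed. By Lemma \ref{compact}, $Q_{-2}(v_{j,0}-v_0)\to0$, so $v_{j,0}\to v_0$ strongly. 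The unit ball is therefore compact, and $\dim\mathcal N^{-2}_{T'}<\infty$.

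\textbf{Step (b): invariance under $\Delta_D$.} The map $T'\mapsto\dim\mathcal N^{-2}_{T'}$ is nonincreasing and integer valued on $(0,T]$. Hence there exist $T_1\in(0,T]$ and $\varepsilon_0>0$ such that this dimension is constant on $[T_1-\varepsilon_0,T_1]$. Since these spaces are nested, $\mathcal N^{-2}_{T_1-\varepsilon}=\mathcal N^{-2}_{T_1}=:M$ for $0\le\varepsilon\le\varepsilon_0$.
\begin{itemize}
\item For $v_0\in M$ and $0<\varepsilon\le\varepsilon_0$, we have $\varphi e^{it\Delta_D}(e^{i\varepsilon\Delta_D}v_0)=\varphi e^{i(t+\varepsilon)\Delta_D}v_0=0$ for $t\in[0,T_1-\varepsilon]$. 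Thus $e^{i\varepsilon\Delta_D}v_0\in M$, and so $\varepsilon^{-1}(e^{i\varepsilon\Delta_D}v_0-v_0)\in M$.
\item These difference quotients converge to $i\Delta_Dv_0$ in $H^{-4}_D(\Omega)$.
\item $M$ is finite dimensional, hence closed in $H^{-4}_D(\Omega)$, so $\Delta_Dv_0\in M$.
\end{itemize}
Thus $\Delta_D$ maps $M$ into itself.

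\textbf{Step (c): no eigenfunctions vanishing outside $B_R$.} If $M\neq\{0\}$, the restriction of $\Delta_D$ to $M$ has an eigenvector $u\neq0$ with $\Delta_Du=\mu u$, $\mu\in\mathbb C$.
\begin{itemize}
\item \emph{Regularity.} From $u=(1-\mu)\mathcal R u$ and the fact that $\mathcal R$ gains two derivatives, a bootstrap gives $u\in H^{\sigma}_D(\Omega)$ for every $\sigma$. Hence $u$ is smooth in $\Omega$ and satisfies $\Delta u=\mu u$ there.
\item \emph{Support.} Since $u\in M$, $\varphi u=0$, so $u=0$ on $\omega=\{|x|>R\}$.
\item \emph{Unique continuation.} The equation $(\Delta-\mu)u=0$ is a second order elliptic equation with constant coefficients in the connected open set $\Omega$, and $u$ vanishes on the nonempty open subset $\omega$. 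Elliptic unique continuation (or real analyticity of $u$ in the interior) then gives $u\equiv0$, a contradiction.
\end{itemize}
Hence $M=\mathcal N^{-2}_{T_1}=\{0\}$, and therefore $\mathcal N^\sigma_T=\{0\}$.

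\textbf{Main obstacle.} The delicate step is (b). One must arrange that the difference quotients stay in one fixed finite-dimensional space, which is what the choice of $T_1$ in a plateau of $T'\mapsto\dim\mathcal N^{-2}_{T'}$ achieves. One must also pass to the limit in a weaker space, $H^{-4}_D(\Omega)$, because Lemma \ref{relaxed} is only available for $\sigma\ge-2$. Finite dimensionality makes this passage harmless, since a finite-dimensional subspace is closed in any Hausdorff topology.
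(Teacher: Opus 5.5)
Your proposal is correct and follows the paper's proof essentially step by step. You obtain finite dimensionality from Lemmas \ref{relaxed} and \ref{compact}, invariance under $\Delta_D$ by choosing $T_1$ on a plateau of the nonincreasing dimension function and passing to the limit of the difference quotients in $H^{\sigma-2}_D(\Omega)$, and you exclude eigenfunctions vanishing outside a ball by analyticity and the connectedness of $\Omega$. The only differences are cosmetic: you work directly at $\sigma=-2$ (which the paper remarks would suffice), and you place the plateau to the left of $T_1$ instead of using $\mathcal N^\sigma_{T_1+\varepsilon}$.
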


\begin{proof}
Since $\mathcal N^\sigma_T\subset\mathcal N^{-2}_T$ for every $\sigma\in[-2,2]$, it would be enough to consider $\sigma=-2$; we give the argument for an arbitrary $\sigma\in\mathcal I$.

\textit{(i) $\mathcal N^\sigma_T$ is finite dimensional.} For $v_0\in\mathcal N^\sigma_T$, \eqref{relaxed_est} reads $$\|v_0\|^2_{H^\sigma_D}\leq C\,Q_\sigma(v_0).$$ Let $(v_{j,0})$ be a bounded sequence in $\mathcal N^\sigma_T$. Extracting a subsequence, $v_{j,0}\rightharpoonup v_0$ weakly in $H^\sigma_D(\Omega)$, and $v_0\in\mathcal N^\sigma_T$ because this subspace is weakly closed. Applying \eqref{relaxed_est} to $v_{j,0}-v_0\in\mathcal N^\sigma_T$ and Lemma \ref{compact},
$$
\|v_{j,0}-v_0\|^2_{H^\sigma_D}\leq C\,Q_\sigma(v_{j,0}-v_0)\longrightarrow0 .
$$
Thus the closed unit ball of $\mathcal N^\sigma_T$ is compact and $\dim\mathcal N^\sigma_T<\infty$.

\smallskip
\textit{(ii) Invariance under $\Delta_D$.} Fix $T_0>0$. The map $T\mapsto d(T):=\dim\mathcal N^\sigma_T$ is non-increasing and, by (i), takes finitely many values on $[T_0/2,T_0]$. Hence there are $T_0/2\leq a<b\leq T_0$ such that $d$ is constant on $(a,b)$; since the spaces are nested and finite dimensional, $\mathcal N^\sigma_T=\mathcal N^\sigma_{T'}=:\mathcal N$ for all $T,T'\in(a,b)$. Fix $T_1\in(a,b)$ and $0<\varepsilon<b-T_1$. If $v_0\in\mathcal N=\mathcal N^\sigma_{T_1+\varepsilon}$, then $$\varphi e^{it\Delta_D}(e^{i\varepsilon\Delta_D}v_0)=\varphi e^{i(t+\varepsilon)\Delta_D}v_0=0,$$ for $t\in[0,T_1]$, i.e. $e^{i\varepsilon\Delta_D}v_0\in\mathcal N^\sigma_{T_1}=\mathcal N$. Consequently
$$
\frac{e^{i\varepsilon\Delta_D}v_0-v_0}{\varepsilon}\in\mathcal N\quad\text{and}\quad \frac{e^{i\varepsilon\Delta_D}v_0-v_0}{\varepsilon}\xrightarrow[\varepsilon\to0^+]{}i\Delta_Dv_0\ \text{ in } H^{\sigma-2}_D(\Omega).
$$
Being finite dimensional, $\mathcal N$ is closed in $H^{\sigma-2}_D(\Omega)$; hence $\Delta_D\mathcal N\subset\mathcal N$.

\smallskip
\textit{(iii) Conclusion.} Suppose $\mathcal N\neq\{0\}$. Then the linear map $\Delta_D|_{\mathcal N}$ has an eigenvector: $v_0\in\mathcal N\setminus\{0\}$, $\Delta_Dv_0=\mu v_0$ for some $\mu\in\mathbb C$. We have $\mu\neq1$, since otherwise $(1-\Delta_D)v_0=0$ and $v_0=0$. Thus $v_0=(1-\mu)\mathcal Rv_0$ and, iterating, $v_0\in H^{\sigma+2m}_D(\Omega)$ for every $m\in\mathbb N$; in particular $v_0\in\mathcal D(\Delta_D)$ and $\Delta v_0=\mu v_0$ in $\mathcal D'(\Omega)$. By elliptic regularity, $v_0$ is real analytic in $\Omega$. Taking $t=0$ in the definition of $\mathcal N^\sigma_{T_1}$ we get $\varphi v_0=0$, so $v_0$ vanishes on the nonempty open set $\{x\in\Omega:|x|>R+1\}$, where $\varphi=1$. Since $\Omega$ is connected, $v_0\equiv0$, a contradiction. Therefore $\mathcal N^\sigma_{T_1}=\{0\}$, and $\mathcal N^\sigma_{T_0}\subset\mathcal N^\sigma_{T_1}=\{0\}$ because $T_0\geq T_1$. Finally, $\mathcal N^2_T\subset\mathcal N^0_T=\{0\}$.
\end{proof}

\begin{remark}
The spectral argument of Lemma \ref{UCP} uses that the free equation is invariant under time translations. It replaces the Paley--Wiener--Schwartz argument available on $\mathbb{R}^3$ \cite[Lemma 3.1]{BrCaCaDo}, which relies on the explicit kernel of $e^{it\Delta}$ and has no analogue on $\Omega$, and it does not apply to the linearized equation around a nontrivial trajectory, whose potentials depend on time (see Section \ref{Sec4}).
\end{remark}

\vspace{0.1cm}

\noindent\textbf{Step 4.} \textit{Observability inequality.}

\begin{proposition}\label{obs}
Assume \textup{(H)}, let $T>0$ and $\sigma\in[-2,2]$. There exists $C>0$ such that every solution of \eqref{adj} with $v_0\in H^\sigma_D(\Omega)$ satisfies
\begin{equation}\label{OI_new}
\|v_0\|^2_{H^\sigma_D(\Omega)}\leq C\int_0^T\|\varphi v(t)\|^2_{H^\sigma_D(\Omega)}\,dt .
\end{equation}
\end{proposition}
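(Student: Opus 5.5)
For $\sigma\in\mathcal I=[-2,2)$ I would use the classical compactness--uniqueness argument, combining Lemmas \ref{compact}, \ref{relaxed} and \ref{UCP}. The case $\sigma=2$ is then reduced to $\sigma=0$.

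\emph{The case $\sigma\in\mathcal I$.} Suppose \eqref{OI_new} fails. Then there is a sequence $v_{j,0}\in H^\sigma_D(\Omega)$ with $\|v_{j,0}\|_{H^\sigma_D}=1$ and $\int_0^T\|\varphi v_j(t)\|^2_{H^\sigma_D}\,dt\to0$. After extracting a subsequence, $v_{j,0}\rightharpoonup v_0$ weakly in $H^\sigma_D(\Omega)$.

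First I identify the limit. For any test function $g\in L^2(0,T;H^{-\sigma}_D(\Omega))$, the map $w_0\mapsto\int_0^T\langle\varphi e^{it\Delta_D}w_0,g(t)\rangle\,dt$ is a continuous linear form on $H^\sigma_D$, because multiplication by $\varphi$ is bounded and the group is unitary. Hence $\varphi v_j\rightharpoonup\varphi v$ weakly in $L^2(0,T;H^\sigma_D)$. Since $\varphi v_j\to0$ strongly, $\varphi v=0$ on $[0,T]$. Thus $v_0\in\mathcal N^\sigma_T=\{0\}$ by Lemma \ref{UCP}.

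Next I conclude. Lemma \ref{compact} gives $Q_\sigma(v_{j,0})=Q_\sigma(v_{j,0}-v_0)\to0$. Then \eqref{relaxed_est} gives $1=\|v_{j,0}\|^2_{H^\sigma_D}\to0$, a contradiction.

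\emph{The endpoint $\sigma=2$.} Here Lemma \ref{relaxed} is not available, so I would use time invariance. Put $w_0:=\Delta_Dv_0\in L^2$, so that $e^{it\Delta_D}w_0=\Delta_D v(t)=-i\partial_t v$. Since $\|v_0\|_{H^2_D}\simeq\|v_0\|_{L^2}+\|\Delta_Dv_0\|_{L^2}$, the case $\sigma=0$, applied to both $v_0$ and $w_0$, gives
$$
\|v_0\|^2_{H^2_D}\leq C\int_0^T\big(\|\varphi v\|^2_{L^2}+\|\varphi\Delta_D v\|^2_{L^2}\big)\,dt .
$$
The remaining task is to bound $\|\varphi\Delta_Dv\|_{L^2}$ by $\|\varphi v\|_{H^2_D}$. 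Since $\varphi$ vanishes near $\partial\Omega$, we have $\varphi\Delta v=\Delta(\varphi v)-[\Delta,\varphi]v$. The commutator is first order and supported on $\operatorname{supp}\nabla\varphi$, where $\varphi$ may be small. This is the main obstacle: $\|\varphi v\|_{H^2}$ does not obviously control $\nabla v$ there.

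My first attempt at this obstacle would be to bound the commutator term by $C\|\psi v\|_{H^1}$ instead. Local smoothing \eqref{smooth_est} with $s=1/2$ bounds $\int_0^T\|\psi v\|^2_{H^1}\,dt$ by $C\|v_0\|^2_{H^{1/2}_D}$. Interpolation then gives $\|v_0\|^2_{H^{1/2}_D}\leq\varepsilon\|v_0\|^2_{H^2_D}+C_\varepsilon\|v_0\|^2_{L^2}$. The $\varepsilon$-term is absorbed into the left-hand side, and the $L^2$-term is controlled by the case $\sigma=0$ through $\int_0^T\|\varphi v\|^2_{L^2}\,dt\leq\int_0^T\|\varphi v\|^2_{H^2_D}\,dt$.

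If that absorption does not close, my fallback is to rerun the relaxed estimate directly at level $2$. I would write $\|v_0\|^2_{H^2_D}\lesssim\int_0^T\big(\|\varphi v\|^2_{H^2_D}+\|\chi v\|^2_{H^2_D}\big)\,dt$. The $\chi$-term would then be handled by interpolating between local smoothing at $s=1$ and unitarity, as in Lemma \ref{relaxed}, yielding a lower-order term. That term would be removed by the same compactness--uniqueness argument, using $\mathcal N^2_T\subset\mathcal N^0_T=\{0\}$.
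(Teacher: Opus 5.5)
Your proof is correct. For $\sigma\in\mathcal I$ it is exactly the paper's compactness--uniqueness argument: the weak limit is identified through the continuity of $w_0\mapsto\varphi e^{it\Delta_D}w_0$, it is killed by Lemma \ref{UCP}, and you conclude with Lemmas \ref{compact} and \ref{relaxed}. For $\sigma=2$ your first attempt already closes. The only difference from the paper is how you handle the commutator.

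The paper applies the case $\sigma=0$ to $Av_0$, with $A=1-\Delta_D$, so that $\|Av_0\|_{L^2}=\|v_0\|_{H^2_D}$ exactly. It bounds the commutator crudely, using only unitarity:
$$\|[\Delta,\varphi]v(t)\|_{L^2}\le C\|v(t)\|_{H^1_D}=C\|v_0\|_{H^1_D}.$$
It then absorbs $\|v_0\|^2_{H^1_D}\le\varepsilon\|v_0\|^2_{H^2_D}+C_\varepsilon\|v_0\|^2_{L^2}$ and controls the $L^2$ term by the case $\sigma=0$ once more.

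So the ``main obstacle'' you identify is not really one. You never need to control $\nabla v$ on $\operatorname{supp}\nabla\varphi$ by $\varphi v$. You only need a norm strictly weaker than $H^2_D$, and the global $H^1_D$ norm already suffices. Your localization $[\Delta,\varphi]v=[\Delta,\varphi](\psi v)$, combined with local smoothing at $s=1/2$, gives the sharper remainder $\|v_0\|_{H^{1/2}_D}$. That is valid but buys nothing here. It also makes the step from $\sigma=0$ to $\sigma=2$ depend on Proposition \ref{smoothing} again, whereas in the paper this step uses no geometric input beyond the $\sigma=0$ inequality.

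You should drop the fallback, because it would not work. At $\tau=2$, interpolating between local smoothing at $s=1$ and unitarity on $H^2_D$ has parameter $a=2\tau-3=1$ and gives $\theta=2\tau-2=2$. That is no gain at all, and it is precisely why the index set $\mathcal I$ stops short of $2$.
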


\begin{proof}
Let first $\sigma\in\mathcal I$. If \eqref{OI_new} fails, there are $v_{j,0}\in H^\sigma_D(\Omega)$ with
\begin{equation}\label{contr}
\|v_{j,0}\|_{H^\sigma_D}=1,\qquad\int_0^T\|\varphi v_j(t)\|^2_{H^\sigma_D}\,dt\leq\frac1j .
\end{equation}
Extracting a subsequence, $v_{j,0}\rightharpoonup v_0$ weakly in $H^\sigma_D(\Omega)$. Arguing as in Lemma \ref{compact}(c), $\varphi v_j\rightharpoonup\varphi v$ weakly in $L^2(0,T;H^\sigma_D(\Omega))$, while $\varphi v_j\to0$ strongly by \eqref{contr}. Hence $\varphi v=0$ a.e. in $(0,T)$ and, by continuity of $t\mapsto\varphi v(t)\in H^\sigma_D(\Omega)$, for every $t\in[0,T]$. Thus $v_0\in\mathcal N^\sigma_T=\{0\}$ by Lemma \ref{UCP}. By Lemma \ref{compact}, $Q_\sigma(v_{j,0})=Q_\sigma(v_{j,0}-v_0)\to0$, and Lemma \ref{relaxed} gives
$$
1=\|v_{j,0}\|^2_{H^\sigma_D}\leq C\left(\frac1j+Q_\sigma(v_{j,0})\right)\longrightarrow0,
$$
a contradiction.

Let now $\sigma=2$ and set $A:=1-\Delta_D$. For $v_0\in H^2_D(\Omega)$, the function $Av(t)=e^{it\Delta_D}Av_0$ solves \eqref{adj} with initial data $Av_0\in L^2(\Omega)$, and $\|v_0\|_{H^2_D}=\|Av_0\|_{L^2}$. Since $$A(\varphi v)-\varphi Av=-[\Delta,\varphi]v,$$ see the proof of Lemma \ref{relaxed}, $\varphi v(t)\in H^2_D(\Omega)$, and $[\Delta,\varphi]$ is bounded from $H^1_D(\Omega)$ to $L^2(\Omega)$, we have
$$
\|\varphi Av(t)\|_{L^2}\leq\|\varphi v(t)\|_{H^2_D}+C\|v(t)\|_{H^1_D}=\|\varphi v(t)\|_{H^2_D}+C\|v_0\|_{H^1_D}.
$$
Applying \eqref{OI_new} with $\sigma=0$ to $Av$, we obtain
$$
\|v_0\|^2_{H^2_D}\leq C_1\int_0^T\|\varphi v(t)\|^2_{H^2_D}\,dt+C_2T\|v_0\|^2_{H^1_D}.
$$
By the spectral theorem, $$\|v_0\|^2_{H^1_D}\leq\varepsilon\|v_0\|^2_{H^2_D}+C_\varepsilon\|v_0\|^2_{L^2},$$ for every $\varepsilon>0$, and, by \eqref{OI_new} with $\sigma=0$ applied to $v$, $$\|v_0\|^2_{L^2}\leq C\int_0^T\|\varphi v(t)\|^2_{L^2}\,dt\leq C\int_0^T\|\varphi v(t)\|^2_{H^2_D}\,dt.$$ Choosing $C_2T\varepsilon=\frac12$ gives \eqref{OI_new} with $\sigma=2$.
\end{proof}

\vspace{0.1cm}

\noindent\textbf{Step 5.} \textit{Exact controllability: Proof of Theorem \ref{lin_th}}

Consider the control system
\begin{equation}\label{lin_new}
\begin{cases}
iz_t+\Delta z=\varphi(x)h(x,t), & x\in\Omega,\ t\in(0,T),\\
z(x,t)=0, & x\in\partial\Omega,\ t\in(0,T),\\
z(x,0)=z_0(x), & x\in\Omega .
\end{cases}
\end{equation}
Let $s\in[-2,2]$. For every $z_0\in H^s_D(\Omega)$ and every $h\in L^1(0,T;H^s_D(\Omega))$, problem \eqref{lin_new} has a unique solution $z\in C([0,T];H^s_D(\Omega))$, given by Duhamel's formula.

\begin{theorem}\label{exact_lin}
Assume \textup{(H)}, let $T>0$ and $s\in[-2,2]$. There exists a bounded linear operator
$$
G:H^s_D(\Omega)\times H^s_D(\Omega)\longrightarrow C([0,T];H^s_D(\Omega))
$$
such that for any $z_0,z_T\in H^s_D(\Omega)$ the solution of \eqref{lin_new} with $h=G(z_0,z_T)$ satisfies $z(T)=z_T$. The control force $\varphi h$ is supported in $\{x\in\Omega:|x|\geq R\}\times[0,T]$.
\end{theorem}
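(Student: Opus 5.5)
The plan is the Hilbert Uniqueness Method, built on the observability inequality of Proposition \ref{obs}. By linearity and Duhamel's formula, it is enough to steer $z_0$ to $0$ and $0$ to $z_T$. Time reversal and conjugation reduce the second problem to the first: if $\tilde z(t)=\overline{z(T-t)}$, then $\tilde z$ solves the same type of equation with control $-\overline{h(T-t)}$. So we only need a bounded linear map $z_0\mapsto h$ with $z(T)=0$. Equivalently, we must show that the range of
$$L_T h:=-i\int_0^T e^{i(T-t)\Delta_D}\varphi h(t)\,dt$$
covers $H^s_D(\Omega)$ when $h$ ranges over $L^2(0,T;H^s_D(\Omega))$.

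\textbf{Duality in the $H^s_D$ inner product.} We work in the Hilbert space $H^s_D(\Omega)$ with inner product $(f,g)_s=\langle (1-\Delta_D)^{s}f,g\rangle$. We choose the control in the form $h(t)=\varphi\, e^{i(t-T)\Delta_D}\phi_T$ with $\phi_T\in H^s_D(\Omega)$, so that $h=\varphi v$ for a free solution $v$ of \eqref{adj}. Since $e^{it\Delta_D}$ is unitary on $H^s_D$ and commutes with $(1-\Delta_D)^s$, we compute
$$(L_T h,\phi_T)_s=-i\int_0^T\big(\varphi v(t),v(t)\big)_s\,dt.$$
This is not $\int\|\varphi v\|_s^2$, because multiplication by $\varphi$ is not self-adjoint for $(\cdot,\cdot)_s$. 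The paper advertises that no commutator with fractional powers is needed, so we avoid that route. Instead we use the adjoint $M_\varphi^*$ of multiplication by $\varphi$ in $H^s_D$, namely $M_\varphi^*=(1-\Delta_D)^{-s}\varphi(1-\Delta_D)^{s}$. It is bounded on $H^s_D$ because multiplication by $\varphi$ is bounded on $H^{-s}_D$ for $s\in[-2,2]$.

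\textbf{The Gramian and its coercivity.} We take the control $h(t)=M_\varphi^*\varphi\,v(t)$. This lies in $C([0,T];H^s_D)$ since $v\in C([0,T];H^s_D)$. The Gramian becomes
$$\Lambda\phi_T:=L_T h=-i\int_0^T e^{i(T-t)\Delta_D}\varphi M_\varphi^*\varphi v(t)\,dt,$$
and
$$(\Lambda\phi_T,\phi_T)_s=-i\int_0^T\|\varphi M^*_\varphi\cdots\|^2\,dt.$$
We adjust the phase: replacing $\phi_T$ by $i\phi_T$, or using $-i$ in the control, gives $(\Lambda\phi_T,\phi_T)_s=\int_0^T\|M_\varphi^*\varphi v(t)\|_s^2\,dt$ up to a factor. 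Written in terms of the $H^s_D$-adjoint of the operator $v\mapsto \varphi v$, this is $\int_0^T\|\varphi v(t)\|_s^2\,dt$. Indeed, for the operator $B=M_\varphi$ one has $(BB^*\cdots)$, and the cleanest choice is $h=\varphi v$ with pairing through $B^*$: $(L_T\,\cdot\,)^*$ maps $\phi_T$ to $i\,M_\varphi^*$ applied to the solution, so $(L_TL_T^*\phi_T,\phi_T)_s=\int_0^T\|M_\varphi^* v\|_s^2\,dt$.

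So the coercivity we need is observability for $M_\varphi^*$, not for $\varphi$. This is the main obstacle. The remedy is to run HUM in the pivot $L^2$ duality between $H^s_D$ and $H^{-s}_D$ rather than in the $H^s_D$ inner product. In that duality the adjoint of multiplication by $\varphi$ is multiplication by $\varphi$ itself. One then obtains
$$\langle L_T(\varphi v),\phi\rangle\sim\int_0^T\|\varphi v\|^2_{L^2},$$
which is the wrong norm. We therefore define the control through the Riesz isomorphism: $h(t)=(1-\Delta_D)^{-s}\varphi\cdot(1-\Delta_D)^{s}\cdots$.

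Concretely, we minimise $J(\phi_T)=\frac12\int_0^T\|\varphi e^{i(t-T)\Delta_D}\phi_T\|_s^2\,dt+\mathrm{Re}(z_0\text{-term})$ over $H^s_D$. Proposition \ref{obs} makes $J$ coercive and strictly convex. Its minimiser $\hat\phi$ is linear and bounded in $z_0$, and the Euler--Lagrange equation says exactly that $h:=M_\varphi^*\varphi\hat v$ drives $z$ to $0$.

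\textbf{Regularity and support.} Boundedness of $M_\varphi^*$ on $H^s_D$ and continuity of $\hat v$ in time give $h\in C([0,T];H^s_D)$. The force $\varphi h$ is supported where $\varphi\neq0$, that is, in $|x|\geq R$.
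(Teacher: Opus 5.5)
There is a genuine gap: the scheme you finally adopt does not produce a control. The functional $J(\phi_T)=\frac12\int_0^T\|\varphi v(t)\|_{H^s_D}^2\,dt+\dots$ is coercive by Proposition \ref{obs} at level $\sigma=s$. However, its Euler--Lagrange equation says $\int_0^T(M_\varphi^*\varphi\hat v,w)_s\,dt=c\,(z_0,w(0))_s$ for every free solution $w$ and some constant $c$. On the other hand, the solution of \eqref{lin_new} satisfies $(z(T),w(T))_s=(z_0,w(0))_s-i\int_0^T(\varphi h,w)_s\,dt$. Matching the two requires the \emph{force} $\varphi h$ to be a multiple of $M_\varphi^*\varphi\hat v=(1-\Delta_D)^{-s}\varphi(1-\Delta_D)^s\varphi\hat v$. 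This is in general not of the form $\varphi h$: for $s>0$ it is typically not even supported in $\{|x|\geq R\}$, since $(1-\Delta_D)^{-s}$ is nonlocal. With your choice $h=M_\varphi^*\varphi\hat v$, the force is $\varphi M_\varphi^*\varphi\hat v$, whose pairing with $w$ is $\int_0^T(M_\varphi^*\varphi\hat v,M_\varphi^*w)_s\,dt$. The Euler--Lagrange equation says nothing about that quantity; already for $s=0$ you are off by a factor $\varphi$. In the $(\cdot,\cdot)_s$ framework, the functional dictated by the control operator $h\mapsto\varphi h$ is $\frac12\int_0^T\|M_\varphi^*v(t)\|_{H^s_D}^2\,dt$. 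You correctly identified it and then set it aside as ``the main obstacle''. The Gramian identities before it are also left with ellipses, so no coercive quadratic form is actually established.

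The missing observation is that this obstacle is illusory. Put $\tilde v:=(1-\Delta_D)^sv$; then $\|M_\varphi^*v(t)\|_{H^s_D}=\|\varphi\,\tilde v(t)\|_{H^{-s}_D}$. Moreover $\tilde v$ is again a free solution of \eqref{adj}, with $\|\tilde v(0)\|_{H^{-s}_D}=\|\phi_T\|_{H^s_D}$. Hence the needed coercivity is exactly Proposition \ref{obs} at the \emph{dual} level $\sigma=-s\in[-2,2]$. The resulting HUM control $iM_\varphi^*\hat v=i(1-\Delta_D)^{-s}(\varphi\tilde v)$ is, up to sign, the paper's $h=-i\Lambda_s^{-1}(\varphi v)$ with adjoint data $v_0\in H^{-s}_D(\Omega)$. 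This is where your remark on ``defining the control through the Riesz isomorphism'' should have led. The paper works directly with the $L^2$ pivot pairing $\langle\cdot,\cdot\rangle$, for which multiplication by $\varphi$ is symmetric, so no adjoint $M_\varphi^*$ ever appears. It computes $\langle\mathcal Sv_0,v_0\rangle=\int_0^T\|\varphi v(t)\|^2_{H^{-s}_D}\,dt$ and applies Lax--Milgram to $\mathcal S:H^{-s}_D\to H^s_D$. (Minor: time reversal plus conjugation turns $h$ into $\overline{h(T-t)}$, not $-\overline{h(T-t)}$.)
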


\begin{proof}
Let $\sigma:=-s\in[-2,2]$ and $\Lambda_s:=(1-\Delta_D)^s$, an isometric isomorphism from $H^s_D(\Omega)$ onto $H^{-s}_D(\Omega)=H^\sigma_D(\Omega)$ with $\langle\Lambda_s^{-1}g,g\rangle=\|g\|^2_{H^\sigma_D}$, for $g\in H^\sigma_D(\Omega)$.

\smallskip
\textit{Null controllability.} For $v_0\in H^\sigma_D(\Omega)$ let $v$ solve \eqref{adj}, set
\begin{equation}\label{HUMcontrol}
h(t):=-i\,\Lambda_s^{-1}\big(\varphi v(t)\big)\in C([0,T];H^s_D(\Omega)),
\end{equation}
let $z$ be the solution of \eqref{lin_new} (backward in time) with $z(T)=0$, and define $\mathcal S v_0:=z(0)\in H^s_D(\Omega)$. For smooth data, $$\frac{d}{dt}\langle z,v\rangle=\langle i\Delta z-i\varphi h,v\rangle+\langle z,i\Delta v\rangle=-i\langle\varphi h,v\rangle,$$ so that, by density,
$$
\langle\mathcal Sv_0,v_0\rangle=i\int_0^T\langle h(t),\varphi v(t)\rangle\,dt=\int_0^T\langle\Lambda_s^{-1}(\varphi v),\varphi v\rangle\,dt=\int_0^T\|\varphi v(t)\|^2_{H^\sigma_D}\,dt .
$$
By Proposition \ref{obs}, $\langle\mathcal Sv_0,v_0\rangle\geq C^{-1}\|v_0\|^2_{H^\sigma_D}$. Since $\mathcal S:H^\sigma_D\to H^s_D=(H^\sigma_D)'$ is bounded and linear, the Lax--Milgram theorem shows that $\mathcal S$ is an isomorphism. Given $z_0\in H^s_D(\Omega)$, the choice $v_0=\mathcal S^{-1}z_0$ in \eqref{HUMcontrol} drives $z_0$ to $0$ at time $T$, by uniqueness of solutions of \eqref{lin_new}.

\smallskip
\textit{Exact controllability.} The solution of \eqref{lin_new} is $$z(t)=e^{it\Delta_D}z_0+L_h(t),$$ with $L_h$ linear in $h$. Thus $z(T)=z_T$ if and only if $h$ drives $z_0-e^{-iT\Delta_D}z_T$ to $0$, and we set $$G(z_0,z_T):=-i\Lambda_s^{-1}\big(\varphi\,e^{it\Delta_D}\mathcal S^{-1}(z_0-e^{-iT\Delta_D}z_T)\big).$$ Boundedness of $G$ follows from the boundedness of $\mathcal S^{-1}$, $\Lambda_s^{-1}$, $e^{it\Delta_D}$ and of multiplication by $\varphi$. Since \eqref{lin_new} coincides with \eqref{int_lin}, this also proves Theorem \ref{lin_th}.
\end{proof}

\begin{remarks} Let us give some remarks.
\begin{itemize}
\item[(1)] The forcing term in \eqref{lin_new} is $\varphi h=A v$, with $A:=-i\varphi(1-\Delta_D)^{-s}\varphi$, which is exactly the HUM operator used by Laurent \cite[Proposition 7.2]{Laurent2}. Only $\varphi h$, not $h$, is localized, as in \cite[Remark 1]{BrCaCaDo}.
\item[(2)] The geometric assumption is used only through Proposition \ref{smoothing}; no star-shapedness of $\Theta$ is needed, and no normal trace of $v$ on $\partial\Omega$ appears in the proof of Proposition \ref{obs}.
\item[(3)] For the nonlinear results we only use $s=1$ (control in $H^1_0(\Omega)$) and $s=2$ (control in $H^2_D(\Omega)$), and $s\in(\frac n2,2]$ in Remark \ref{rem_algebra}. The whole range $s\in[-2,2]$ is obtained without any commutator between $\varphi$ and fractional powers of $\Delta_D$: for $\sigma\in(-1,0)$, the integer identity $\varphi\mathcal R-\mathcal R\varphi=-\mathcal R[\Delta,\varphi]\mathcal R$ is combined with the relaxed inequality at the level $\tau=\sigma+2\in(1,2)$, at the price of the weaker remainder in \eqref{Qsigma}. Moreover, the cut-offs are only used as multipliers on $H^\sigma_D(\Omega)$, $\sigma\in[-2,2]$, so that no compatibility condition of higher order on $\partial\Omega$ appears.
\item[(4)] Proposition \ref{smoothing} is only used with $s\in[0,1]$, which is exactly the range proved in \cite{BGT1}; for $\sigma\in[-2,0)$ it is applied to $y_0=\mathcal Rv_0\in H^{\sigma+2}_D(\Omega)$.
\end{itemize}
\end{remarks}

\subsection{Nonlinear results}
This subsection is devoted to the proofs of Theorems \ref{nlin_th} and \ref{principal} and of Corollary \ref{quintic}. Let $m\in\{3,5\}$ and $g(v):=\lambda|v|^{m-1}v$. Note that the case $m=3$ corresponds to \eqref{s1a}, and the case $m=5$ to Corollary \ref{quintic}. We consider
\begin{equation}\label{nlinear}
\begin{cases}
iv_t+\Delta v+g(v)=\mathcal{K}h,&\text{ in }\Omega\times(0,T),\\
v(x,t)=0,&\text{ on }\partial\Omega\times(0,T),\\
v(x,0)=v_0(x),&\text{ in }\Omega,
\end{cases}
\end{equation}
where
\begin{equation}\label{KH}
(\mathcal{K}h)(x,t):=\varphi(x)h(x,t),
\end{equation}
and $\varphi$ is the cut-off in \eqref{phi}, so that $\mathcal Kh$ is supported in $\overline{\omega}\times[0,T]$, see \eqref{omega}. Throughout this subsection assumption \textup{(H)} of Subsection \ref{sec:linear} is in force, and we write $W(t):=e^{it\Delta_D}$. Since $$iv_t=-\Delta v-g(v)+\mathcal Kh,$$ a solution of \eqref{nlinear} is a fixed point of
\begin{equation}\label{duhamel}
v(t)=W(t)v_0+i\int_0^tW(t-r)g(v)(r)\,dr-i\int_0^tW(t-r)(\mathcal Kh)(r)\,dr .
\end{equation}

\subsubsection{An abstract perturbation result}

\begin{theorem}\label{control_n}
Assume \textup{(H)}. Let $T>0$, $s\in[1,2]$, $m\in\{3,5\}$, and let $X_{T}$ be a Banach space with $X_T\hookrightarrow C([0,T];H^s_D(\Omega))$, that is, $\|u\|_{C([0,T];H^s_D)}\leq\|u\|_{X_T}$. Assume that there exists $C_0\geq1$ such that
\begin{itemize}
\item[(i)] $\|W(\cdot)u_0\|_{X_T}\leq C_0\|u_0\|_{H^s_D(\Omega)}$ for all $u_0\in H^s_D(\Omega)$;
\item[(ii)] $\big\|\displaystyle\int_0^tW(t-r)f(r)\,dr\big\|_{X_T}\leq C_0\|f\|_{L^1(0,T;H^s_D(\Omega))}$, \quad for all $f\in L^1(0,T;H^s_D(\Omega))$;
\item[(iii)] for all $u,v\in X_T$, $g(v)\in L^1(0,T;H^s_D(\Omega))$ and
$$
\|g(v)\|_{L^1(0,T;H^s_D)}\leq C_0\|v\|^m_{X_T},\qquad \|g(v)-g(u)\|_{L^1(0,T;H^s_D)}\leq C_0\big(\|v\|^{m-1}_{X_T}+\|u\|^{m-1}_{X_T}\big)\|v-u\|_{X_T}.
$$
\end{itemize}
Then there exists $\delta>0$ such that for any $v_0,v_T\in H^s_D(\Omega)$ with $\|v_0\|_{H^s_D}\leq\delta$ and $\|v_T\|_{H^s_D}\leq\delta$, there is a control $h\in C([0,T];H^s_D(\Omega))$ such that \eqref{nlinear} admits a solution $v\in X_T$ with $v(0)=v_0$ and $v(T)=v_T$.
\end{theorem}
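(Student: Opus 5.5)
The plan is a contraction argument built on the linear control operator $G$ of Theorem \ref{exact_lin}, with $s\in[1,2]$. For $u\in X_T$ set
$$
\mathcal N(u)(t):=i\int_0^tW(t-r)g(u)(r)\,dr,
$$
which is the nonlinear Duhamel term in \eqref{duhamel}. Given $v_0,v_T$, we look for $v$ of the form
$$
v=W(\cdot)v_0+\mathcal N(v)-i\int_0^tW(t-r)\mathcal K h(r)\,dr.
$$
The control is chosen so that the linear control cancels the nonlinear contribution at the final time:
$$
h=\Phi(v):=G\big(v_0,\,v_T-\mathcal N(v)(T)\big).
$$
With this choice, the linear solution $z$ of \eqref{lin_new} with $z(0)=v_0$ and control $h$ satisfies $z(T)=v_T-\mathcal N(v)(T)$. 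Hence $v:=z+\mathcal N(v)$ satisfies $v(0)=v_0$ and $v(T)=v_T$. It therefore suffices to find a fixed point of
$$
\Gamma(v):=z_{\Phi(v)}+\mathcal N(v),
$$
where $z_h$ denotes the solution of \eqref{lin_new} with data $v_0$ and control $h$. Any such fixed point solves \eqref{nlinear}, since $z_h$ solves the linear equation with forcing $\mathcal Kh$ and $\mathcal N(v)$ solves $i\partial_t w+\Delta w=-g(v)$ with zero data.

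The key steps are the following.

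\emph{Bound on $\mathcal N$.} By (ii) and (iii),
$$
\|\mathcal N(v)\|_{X_T}\le C_0^2\|v\|_{X_T}^m .
$$
Since $X_T\hookrightarrow C([0,T];H^s_D)$, the same bound holds for $\|\mathcal N(v)(T)\|_{H^s_D}$. The Lipschitz estimate in (iii) gives the analogous difference bounds.

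\emph{Bound on $z_h$.} Write $z_h=W(\cdot)v_0-i\int_0^tW(t-r)\mathcal K h\,dr$. By (i), (ii), boundedness of $\varphi$ on $H^s_D$, and $\|\mathcal K h\|_{L^1H^s}\le CT\|h\|_{C([0,T];H^s_D)}$,
$$
\|z_h\|_{X_T}\le C_0\|v_0\|_{H^s_D}+C\|h\|_{C([0,T];H^s_D)}.
$$
By boundedness of $G$,
$$
\|h\|_{C([0,T];H^s_D)}\le\|G\|\big(\|v_0\|+\|v_T\|+C_0^2\|v\|_{X_T}^m\big).
$$
Combining, $\Gamma$ maps the ball $\{\|v\|_{X_T}\le r\}$ into
$$
\{\|v\|_{X_T}\le C_1\delta+C_1r^m\}.
$$
By linearity of $G$ and $h\mapsto z_h$ (for fixed $v_0$), it is also Lipschitz there with constant $C_1r^{m-1}$.

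\emph{Closing the argument.} Choose $r$ with $C_1r^{m-1}\le1/2$, then $\delta$ with $C_1\delta\le r/2$. Banach's fixed point theorem in the closed ball of $X_T$ gives $v$. The control $h=\Phi(v)$ lies in $C([0,T];H^s_D)$.

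The main obstacle I expect is bookkeeping rather than substance: one must check that $v\mapsto\mathcal N(v)(T)$ is well defined and controlled in $H^s_D$. This follows from the embedding hypothesis. One must also check that the fixed point genuinely solves \eqref{nlinear} in the Duhamel sense \eqref{duhamel} and hits $v_T$. This is the algebraic identity $z_{\Phi(v)}(T)=v_T-\mathcal N(v)(T)$, which follows directly from the defining property of $G$.
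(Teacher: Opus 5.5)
Your proposal is correct and follows the paper's proof essentially step for step. It uses the same control $h_v=G\big(v_0,v_T-N(v)(T)\big)$, which absorbs the nonlinear Duhamel term at time $T$; the same fixed-point map $W(t)v_0+N(v)(t)-i\int_0^tW(t-r)\varphi h_v(r)\,dr$; and the same contraction estimates from (i)--(iii), with the Lipschitz bound coming from $h_v-h_u=-G\big(0,N(v)(T)-N(u)(T)\big)$. The only cosmetic difference is that you fix the radius $r$ first and then $\delta$, whereas the paper sets $M=4C_1\delta$ and shrinks $\delta$.
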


\begin{proof}
Let $G$ be the operator given by Theorem \ref{exact_lin}: for $z_0,z_T\in H^s_D(\Omega)$, the solution of \eqref{lin_new} with $h=G(z_0,z_T)$ starts at $z_0$ and satisfies $z(T)=z_T$, and $$\|G(z_0,z_T)\|_{C([0,T];H^s_D)}\leq C_G(\|z_0\|_{H^s_D}+\|z_T\|_{H^s_D}).$$ Recall also that multiplication by $\varphi$ is bounded on $H^s_D(\Omega)$, with norm $C_\varphi$.

For $v\in X_T$ set
$$
N(v)(t):=i\int_0^tW(t-r)g(v)(r)\,dr,\qquad h_v:=G\big(v_0,\,v_T-N(v)(T)\big),
$$
and
$$
\mathcal T(v)(t):=W(t)v_0+N(v)(t)-i\int_0^tW(t-r)(\varphi h_v)(r)\,dr .
$$
By definition of $G$, the linear part $$W(t)v_0-i\int_0^tW(t-r)\varphi h_v\,dr$$ equals $v_0$ at $t=0$ and $v_T-N(v)(T)$ at $t=T$. Hence $\mathcal T(v)(0)=v_0$ and $\mathcal T(v)(T)=v_T$, and by \eqref{duhamel} a fixed point of $\mathcal T$ is a solution of \eqref{nlinear} with control $h=h_v$ and the required properties.

By (ii)--(iii), $$\|N(v)\|_{X_T}\leq C_0^2\|v\|^m_{X_T}$$ and, in particular, $$\|N(v)(T)\|_{H^s_D}\leq C_0^2\|v\|^m_{X_T}.$$ Therefore, using (i), (ii) and $$\|\varphi h_v\|_{L^1(0,T;H^s_D)}\leq TC_\varphi\|h_v\|_{C([0,T];H^s_D)},$$ we get
$$
\|\mathcal T(v)\|_{X_T}\leq C_1\big(\|v_0\|_{H^s_D}+\|v_T\|_{H^s_D}\big)+C_2\|v\|^m_{X_T},
$$
with $$C_1:=C_0+C_0TC_\varphi C_G$$ and $$C_2:=C_0^2(1+C_0TC_\varphi C_G).$$ Similarly, since $h_v-h_u=-G(0,N(v)(T)-N(u)(T))$,
$$
\|\mathcal T(v)-\mathcal T(u)\|_{X_T}\leq C_2\big(\|v\|^{m-1}_{X_T}+\|u\|^{m-1}_{X_T}\big)\|v-u\|_{X_T}.
$$
Let $M:=4C_1\delta$ and choose $\delta>0$ so small that $4C_2M^{m-1}\leq1$. If $\|v_0\|_{H^s_D},\|v_T\|_{H^s_D}\leq\delta$, then for $u,v$ in the closed ball $B_M\subset X_T$ we get $$\|\mathcal T(v)\|_{X_T}\leq\frac M2+C_2M^m\leq M$$ and $$\|\mathcal T(v)-\mathcal T(u)\|_{X_T}\leq2C_2M^{m-1}\|v-u\|_{X_T}\leq\frac12\|v-u\|_{X_T}.$$ The contraction mapping principle gives the fixed point.
\end{proof}

\subsubsection{Verification of the assumptions}

We first record the Strichartz-type estimate we need in the energy space. In dimension two, it is a consequence of the Strichartz estimates with loss of \cite[Proposition 2.11]{BGT1} (see also \cite[Corollary 1.3]{Anton}); in dimension three it is the estimate of Blair, Smith and Sogge recalled in Proposition \ref{BSS_est}.

\begin{lemma}\label{L4Linfty}
Assume \textup{(H)} with $n\in\{2,3\}$, and let $p=4$ if $n=3$ and $p\in(2,\infty)$ if $n=2$. For every $T>0$ there exists $C>0$ such that
\begin{equation}\label{LpLinfty}
\|W(\cdot)u_0\|_{L^p(0,T;L^\infty(\Omega))}\leq C\|u_0\|_{H^1_0(\Omega)},\qquad\forall\,u_0\in H^1_0(\Omega).
\end{equation}
\end{lemma}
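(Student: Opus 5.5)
The case $n=3$ is exactly Proposition \ref{BSS_est}, with $p=4$, so only $n=2$ needs an argument. There I plan to deduce \eqref{LpLinfty} from the Strichartz estimate with loss \eqref{main_1} of Proposition \ref{st1}, combined with a Sobolev embedding $W^{s,q}(\Omega)\hookrightarrow L^\infty(\Omega)$ for some $s\le 1-\frac1p$.

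Fix $p\in(2,\infty)$. The admissibility condition $\frac2p+\frac2q=1$ gives $q=\frac{2p}{p-2}\in(2,\infty)$. Since $\Omega$ is an exterior domain with smooth compact boundary, it has the extension property. Hence $W^{s,q}(\Omega)\hookrightarrow L^\infty(\Omega)$ as soon as $sq>n=2$, that is, $s>\frac2q=1-\frac2p$. I will choose $s$ in the interval $\big(1-\frac2p,\,1-\frac1p\big]$, for instance $s=1-\frac1p$; this interval is nonempty because $p<\infty$. This $s$ lies in $[0,1]$, so \eqref{main_1} applies and gives
$$
\|W(\cdot)u_0\|_{L^p(0,T;L^\infty(\Omega))}\le C\|W(\cdot)u_0\|_{L^p(0,T;W^{s,q}(\Omega))}\le C\|u_0\|_{H^{s+1/p}_D(\Omega)}\le C\|u_0\|_{H^1_0(\Omega)}.
$$
The last inequality uses $s+\frac1p\le1$ and $H^1_D(\Omega)=H^1_0(\Omega)$.

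The main delicate point is that the loss of $\frac1p$ derivative in \eqref{main_1} still leaves room above the embedding threshold $\frac2q$. The gap is exactly $\frac1p>0$, which is why $p$ must be finite and why the endpoint $p=2$ has to be excluded.

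I also need to make sure the fractional space $W^{s,q}(\Omega)$ in \cite{BGT1} is the restriction or Bessel-potential space, so that the Sobolev embedding holds on $\Omega$. If it is not, I would instead interpolate between $L^q$ and $W^{1,q}$ and then apply the extension operator to reach $\mathbb{R}^2$, where the embedding is standard.

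As an alternative in dimension two, one could cite \cite[Corollary 1.3]{Anton} directly.
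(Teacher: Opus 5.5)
Your proposal is correct and follows the paper's proof. The case $n=3$ is Proposition~\ref{BSS_est}. For $n=2$ the paper likewise applies the lossy Strichartz estimate \eqref{main_1} with $s=1-\frac1p$ and $q=\frac{2p}{p-2}$, then the embedding $W^{s,q}(\Omega)\hookrightarrow L^\infty(\Omega)$, which holds since $sq=\frac{2(p-1)}{p-2}>2$.
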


\begin{proof}
If $n=3$, this is Proposition \ref{BSS_est}. If $n=2$, by \cite[Proposition 2.11]{BGT1}, for $s'\in[0,1]$ and $(p,q)$ with $p>2$ and $\frac2p+\frac2q=1$,
$$
\|W(\cdot)u_0\|_{L^p(0,T;W^{s',q}(\Omega))}\leq C\|u_0\|_{H^{s'+1/p}_D(\Omega)} .
$$
Take $s'=1-\frac1p\in[0,1]$, so that the right-hand side is $C\|u_0\|_{H^1_0}$, and $q=\frac{2p}{p-2}$. Since $$s'q=\frac{2(p-1)}{p-2}>2=n$$ and $\Omega$ has compact smooth boundary, the Sobolev embedding $W^{s',q}(\Omega)\hookrightarrow L^\infty(\Omega)$ holds, which gives \eqref{LpLinfty}.
\end{proof}

\begin{proposition}[$H^1_0$--controllability, $n=2,3$]\label{H1-Control}
Assume \textup{(H)} with $n\in\{2,3\}$, and let $T>0$ and $m\in\{3,5\}$. The assumptions of Theorem \ref{control_n} hold with $s=1$ and
$$
X_T:=C([0,T];H^1_0(\Omega))\cap L^4(0,T;L^\infty(\Omega)),\qquad \|u\|_{X_T}:=\|u\|_{L^\infty(0,T;H^1_0)}+\|u\|_{L^4(0,T;L^\infty)} .
$$
Consequently, system \eqref{nlinear} is locally exactly controllable in $H^1_0(\Omega)$.
\end{proposition}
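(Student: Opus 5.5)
We verify (i)--(iii) of Theorem \ref{control_n} with $s=1$ and the stated $X_T$; for $n=2$ we use $p=4$ in Lemma \ref{L4Linfty}, so the same space works in both dimensions. The embedding $X_T\hookrightarrow C([0,T];H^1_0(\Omega))$ holds by definition of the norm.

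\emph{Step (i).} Unitarity \eqref{H1} with $\sigma=1$ gives $\|W(\cdot)u_0\|_{L^\infty(0,T;H^1_0)}=\|u_0\|_{H^1_0}$. Continuity in time is standard, and the $L^4_tL^\infty_x$ part is exactly Lemma \ref{L4Linfty}.

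\emph{Step (ii).} We use Minkowski's integral inequality. The $H^1_0$ part is immediate by unitarity:
$$\Big\|\int_0^tW(t-r)f(r)\,dr\Big\|_{H^1_0}\le\|f\|_{L^1(0,T;H^1_0)}.$$
For the $L^4_tL^\infty_x$ part, write the integral as $\int_0^T\mathbf 1_{r<t}W(t)\,[W(-r)f(r)]\,dr$. Apply Lemma \ref{L4Linfty} to $W(\cdot)W(-r)f(r)$ for each fixed $r$, and then integrate in $r$. The truncation $\mathbf 1_{r<t}$ only decreases the $L^4_t L^\infty_x$ norm pointwise, so no Christ--Kiselev argument is needed for the $L^1_t$ source. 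This gives $C\|f\|_{L^1(0,T;H^1_0)}$.

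\emph{Step (iii).} This is the main point. Take $g(v)=\lambda|v|^{m-1}v$ with $v\in H^1_0$; then $g(v)\in H^1_0(\Omega)$, the zero trace being preserved since $g$ is $C^1$ with $g(0)=0$. Its gradient satisfies $|\nabla g(v)|\le C|v|^{m-1}|\nabla v|$, hence
$$\|g(v)(t)\|_{H^1_0}\le C\|v(t)\|_{L^\infty}^{m-1}\|v(t)\|_{H^1_0}.$$
Integrating in time and using H\"older,
$$\|g(v)\|_{L^1(0,T;H^1_0)}\le C\|v\|_{L^\infty H^1_0}\int_0^T\|v\|_{L^\infty}^{m-1}dt.$$
If $m=3$, the last integral is bounded by $T^{1/2}\|v\|^2_{L^4L^\infty}$. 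If $m=5$, it is exactly $\|v\|^4_{L^4L^\infty}$; this is the energy-critical endpoint, and it is precisely why $p=4$ is required. Both cases give $C_0\|v\|^m_{X_T}$.

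For the difference, use $|g(v)-g(u)|\le C(|v|^{m-1}+|u|^{m-1})|v-u|$. The gradient $\nabla(g(v)-g(u))$ splits into two types of terms:
\begin{itemize}
\item[(a)] terms $\lesssim(|v|^{m-1}+|u|^{m-1})|\nabla(v-u)|$;
\item[(b)] terms $\lesssim(|v|^{m-2}+|u|^{m-2})|v-u|\,(|\nabla u|+|\nabla v|)$, since $\partial g$ is Lipschitz with that bound for $m\ge3$.
\end{itemize}
Put the gradient factor in $L^2_x$ and all other factors in $L^\infty_x$, then apply the same time H\"older as above. This yields the stated Lipschitz bound.

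Theorem \ref{control_n} then gives local exact controllability in $H^1_0(\Omega)$. The only delicate points are: the $m=5$ exponent count, and the justification that $g(v)\in H^1_0$ with the chain rule for $W^{1,2}\cap L^\infty$ functions, which we handle by density of smooth compactly supported functions.
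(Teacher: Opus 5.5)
Your proposal is correct and follows the paper's proof essentially step by step: unitarity plus Lemma \ref{L4Linfty} with $p=4$ for (i); Minkowski's inequality for (ii), where your factorization $W(t-r)=W(t)W(-r)$ followed by truncation is equivalent to the paper's use of the lemma on $[0,T-r]$; and for (iii) the bound $\|g(v)\|_{H^1_0}\lesssim\|v\|_{L^\infty}^{m-1}\|v\|_{H^1_0}$, with H\"older in time producing the factor $T^{(5-m)/4}$. The only wording to fix is the opening of Step (iii): $g(v)\in H^1_0(\Omega)$ requires $v\in H^1_0(\Omega)\cap L^\infty(\Omega)$, which holds for a.e.\ $t$ when $v\in X_T$, not merely $v\in H^1_0(\Omega)$; your estimate and closing remark already use this.
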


\begin{proof}
(i) follows from the unitarity of $W(t)$ on $H^1_0(\Omega)$ and Lemma \ref{L4Linfty} with $p=4$.

(ii) For the $C([0,T];H^1_0)$ part we use unitarity. For the $L^4L^\infty$ part, by Minkowski's inequality and Lemma \ref{L4Linfty} applied on the interval $[0,T-r]$ (with a constant uniform in $r\in[0,T]$, since $[0,T-r]\subset[0,T]$),
$$
\Big\|\int_0^t W(t-r)f(r)\,dr\Big\|_{L^4(0,T;L^\infty)}\leq\int_0^T\big\|W(\cdot-r)f(r)\big\|_{L^4(r,T;L^\infty)}\,dr\leq C\|f\|_{L^1(0,T;H^1_0)} .
$$

(iii) If $v\in H^1_0(\Omega)\cap L^\infty(\Omega)$, then $g(v)=\lambda v^{\frac{m+1}2}\overline v^{\frac{m-1}2}\in H^1_0(\Omega)$,
$$
\nabla g(v)=\lambda\Big(\tfrac{m+1}2|v|^{m-1}\nabla v+\tfrac{m-1}2|v|^{m-3}v^2\nabla\overline v\Big)\quad\text{and}\quad \|g(v)\|_{H^1_0}\leq m|\lambda|\,\|v\|^{m-1}_{L^\infty}\|v\|_{H^1_0}.
$$
By H\"older's inequality in time, since $m-1\leq4$,
$$
\|g(v)\|_{L^1(0,T;H^1_0)}\leq m|\lambda|\,\|v\|_{L^\infty(0,T;H^1_0)}\int_0^T\|v(t)\|^{m-1}_{L^\infty}dt\leq m|\lambda|T^{\frac{5-m}4}\|v\|_{X_T}^m .
$$
For the difference, we use the pointwise bounds
$$
\begin{gathered}
|g(v)-g(u)|\leq C(|v|^{m-1}+|u|^{m-1})|v-u|,\\
|\nabla(g(v)-g(u))|\leq C(|v|^{m-1}+|u|^{m-1})|\nabla(v-u)|+C(|v|^{m-2}+|u|^{m-2})(|\nabla v|+|\nabla u|)|v-u|,
\end{gathered}
$$
which give, for a.e. $t$,
$$
\begin{aligned}
\|g(v)-g(u)\|_{H^1_0}\leq{}& C\big(\|v\|^{m-1}_{L^\infty}+\|u\|^{m-1}_{L^\infty}\big)\|v-u\|_{H^1_0}\\
&+C\big(\|v\|^{m-2}_{L^\infty}+\|u\|^{m-2}_{L^\infty}\big)\big(\|v\|_{H^1_0}+\|u\|_{H^1_0}\big)\|v-u\|_{L^\infty}.
\end{aligned}
$$
Integrating in time and using H\"older's inequality (with exponents $\frac{m-2}4+\frac14+\frac{5-m}4=1$ in the last term),
$$
\|g(v)-g(u)\|_{L^1(0,T;H^1_0)}\leq CT^{\frac{5-m}4}\big(\|v\|^{m-1}_{X_T}+\|u\|^{m-1}_{X_T}\big)\|v-u\|_{X_T}.
$$
Theorem \ref{control_n} applies.
\end{proof}

\begin{proposition}[$H^2_D$--controllability, $n=2,3$]\label{H2-Control}
Assume \textup{(H)} with $n\in\{2,3\}$, and let $T>0$ and $m\in\{3,5\}$. The assumptions of Theorem \ref{control_n} hold with $s=2$ and $X_T:=C([0,T];H^2_D(\Omega))$, where $H^2_D(\Omega)=H^2(\Omega)\cap H^1_0(\Omega)$. Consequently, system \eqref{nlinear} is locally exactly controllable in $H^2(\Omega)\cap H^1_0(\Omega)$.
\end{proposition}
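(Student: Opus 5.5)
We verify hypotheses (i)--(iii) of Theorem \ref{control_n} with $s=2$ and $X_T=C([0,T];H^2_D(\Omega))$, normed by $\|u\|_{X_T}=\sup_{t\in[0,T]}\|u(t)\|_{H^2_D}$.

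Hypotheses (i) and (ii) are immediate. The group $W(t)$ is unitary on $H^2_D(\Omega)$ by \eqref{H1}, so (i) holds with $C_0=1$. For (ii), Minkowski's inequality gives
$$\Big\|\int_0^tW(t-r)f(r)\,dr\Big\|_{H^2_D}\leq\int_0^T\|f(r)\|_{H^2_D}\,dr.$$
Continuity in time of the Duhamel integral follows from strong continuity of $W$ and dominated convergence.

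The substance is (iii). We need to show that $v\mapsto g(v)=\lambda|v|^{m-1}v$ maps $H^2_D(\Omega)$ to itself, with
$$\|g(v)\|_{H^2_D}\leq C\|v\|_{H^2_D}^m,\qquad \|g(v)-g(u)\|_{H^2_D}\leq C\big(\|v\|_{H^2_D}^{m-1}+\|u\|_{H^2_D}^{m-1}\big)\|v-u\|_{H^2_D}.$$
Integrating in time then gives (iii), with a factor $T$. We proceed in two steps.

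\emph{Algebra property.} For $n\leq3$, $H^2(\Omega)$ is an algebra. Since $\Omega$ has compact smooth boundary, there is a bounded extension operator $H^2(\Omega)\to H^2(\mathbb R^n)$, and $H^2(\mathbb R^n)$ is an algebra because $2>n/2$. Complex conjugation preserves $H^2(\Omega)$. Since $m$ is odd, $g(v)=\lambda v^{(m+1)/2}\overline v^{(m-1)/2}$ is a polynomial in $v$ and $\overline v$. Hence $g(v)\in H^2(\Omega)$ with $\|g(v)\|_{H^2}\leq C\|v\|_{H^2}^m$.

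\emph{Boundary condition and difference estimate.} We must also check that $g(v)$ vanishes on $\partial\Omega$, so that $g(v)\in H^1_0(\Omega)$. Since $v\in H^2\hookrightarrow C(\overline\Omega)$ and $v=0$ on $\partial\Omega$, the product vanishes there. Equivalently, $v\in H^1_0$ is multiplied by a factor in $H^2\cap L^\infty$, and $H^1_0$ is stable under such multiplication. On $H^2_D(\Omega)$ the $H^2_D$ norm is equivalent to the $H^2$ norm by elliptic regularity, so the first estimate holds. For the difference we factor
$$v^a\overline v^b-u^a\overline u^b=(v-u)P_1+(\overline v-\overline u)P_2,$$
where $P_1,P_2$ are homogeneous polynomials of degree $m-1$ in $u,v,\overline u,\overline v$. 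The algebra property then gives the Lipschitz bound, and the difference again lies in $H^1_0$ by the same argument.

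The only point needing care is the identification $H^2_D(\Omega)=H^2(\Omega)\cap H^1_0(\Omega)$ with equivalent norms, including the low-frequency control on the unbounded domain $\Omega$. This follows from $\|(1-\Delta_D)u\|_{L^2}\simeq\|u\|_{H^2}$, which is standard for a compact smooth boundary. Once this is granted, Theorem \ref{control_n} applies and yields local exact controllability in $H^2(\Omega)\cap H^1_0(\Omega)$.
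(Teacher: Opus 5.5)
Your proposal is correct and follows the same route as the paper. Hypotheses (i)--(ii) come from the unitarity of $W(t)$ on $H^2_D(\Omega)$ and Minkowski's inequality, and (iii) comes from the algebra property of $H^2(\Omega)$ for $n\leq 3$, the fact that $g(v)=\lambda v^{\frac{m+1}2}\overline v^{\frac{m-1}2}$ inherits the Dirichlet condition, and integration in time with a factor $T$; the extra details you supply (extension operator, factorization of the difference, equivalence of the $H^2_D$ and $H^2$ norms, time continuity of the Duhamel term) are correct elaborations of steps the paper leaves implicit.
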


\begin{proof}
(i) and (ii) follow from the unitarity of $W(t)$ on $H^2_D(\Omega)$ and Minkowski's inequality. Since $n\leq3$ and $\Omega$ has compact smooth boundary, $H^2(\Omega)$ is an algebra contained in $L^\infty(\Omega)$; moreover, $g(v)=\lambda v^{\frac{m+1}2}\overline v^{\frac{m-1}2}$ vanishes on $\partial\Omega$ when $v$ does. Hence $g(v)\in H^2_D(\Omega)$ with
$$
\|g(v)\|_{H^2_D}\leq C\|v\|^m_{H^2_D},\qquad\|g(v)-g(u)\|_{H^2_D}\leq C\big(\|v\|^{m-1}_{H^2_D}+\|u\|^{m-1}_{H^2_D}\big)\|v-u\|_{H^2_D},
$$
and (iii) follows after integration in time, with a factor $T$.
\end{proof}

\begin{remark}\label{rem_algebra}
Since $H^s_D(\Omega)=H^s(\Omega)\cap H^1_0(\Omega)$ for $s\in[1,2]$, and $H^s(\Omega)$ is an algebra contained in $L^\infty(\Omega)$ for $s>\frac n2$, the proof of Proposition \ref{H2-Control} applies verbatim to $X_T:=C([0,T];H^s_D(\Omega))$ for every $s\in(\frac n2,2]$. Since Theorem \ref{exact_lin} provides the linear control operator for every $s\in[-2,2]$, and Lemma \ref{extension} below holds for every $s\in[1,2]$, Theorems \ref{principal} and \ref{nlin_th} and Corollary \ref{quintic} remain valid for these values of $s$.
\end{remark}

\begin{proof}[Proof of Theorem \ref{nlin_th}]
It follows from Theorem \ref{control_n} with $m=3$, combined with Proposition \ref{H1-Control} (case $s=1$) and Proposition \ref{H2-Control} (case $s=2$).
\end{proof}

Finally, we present the proof of Theorem \ref{principal}. We need an extension operator which preserves the Dirichlet condition on $\Gamma_2=\partial\Theta$.

\begin{lemma}\label{extension}
Let $s\in[1,2]$. There exists a bounded linear operator $E:X^s(\Omega_0)\to H^s_D(\Omega)$ such that $(Eu)|_{\Omega_0}=u$ for every $u\in X^s(\Omega_0)$.
\end{lemma}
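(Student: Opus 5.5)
The plan is to construct $E$ from an extension across the outer sphere $\Gamma_1$ only, leaving the region near $\Theta$ untouched so that the Dirichlet condition on $\Gamma_2$ is preserved automatically.

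\textbf{Step 1: a cut-off splitting.} Fix $R_1$ with $\Theta\subset B_{R_1}(0)$ and $R_1<R_0$. Choose $\theta\in C^\infty_0(\mathbb{R}^n;[0,1])$ with $\theta=1$ on $B_{R_1}(0)$ and $\operatorname{supp}\theta\subset B_{(R_1+R_0)/2}(0)$. For $u\in X^s(\Omega_0)$, write
$$u=\theta u+(1-\theta)u.$$

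\textbf{Step 2: the inner piece.} The function $\theta u$ vanishes near $\Gamma_1$. Its extension by zero to $\Omega$ therefore lies in $H^s(\Omega)$, and it vanishes on $\partial\Omega=\Gamma_2$. So it belongs to $H^s(\Omega)\cap H^1_0(\Omega)=H^s_D(\Omega)$ for $s\in[1,2]$. This identification is recalled in the footnote of Section \ref{Sec2} (at least for $s\in\{1,2\}$), and for intermediate $s$ one uses the trace characterization directly. The norm is controlled by $C\|u\|_{H^s(\Omega_0)}$.

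\textbf{Step 3: the outer piece.} The function $(1-\theta)u$ is supported in the smooth annulus $A:=\{(R_1+R_0)/2\cdot\tfrac12<\ldots\}$; concretely, take $A=B_{R_0}(0)\setminus\overline{B_{R_1}(0)}$, whose boundary consists of two spheres. Apply a standard Stein (or Calderón/reflection) extension operator $P$ for the smooth bounded domain $A$; it is bounded $H^s(A)\to H^s(\mathbb{R}^n)$ simultaneously for all $s\in[0,2]$. Then multiply by a cut-off $\tilde\theta\in C^\infty(\mathbb{R}^n)$ vanishing on $B_{R_1'}(0)$, with $\Theta\subset B_{R_1'}(0)$ and $R_1'<R_1$, and equal to $1$ outside $B_{(R_1+R_0)/2}(0)$. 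Since $(1-\theta)u$ vanishes near $|x|=R_1$ inside $A$, the product $\tilde\theta P((1-\theta)u)$ agrees with $(1-\theta)u$ on $\Omega_0$. It also vanishes near $\Theta$, so it lies in $H^s_D(\Omega)$.

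\textbf{Step 4: conclusion.} Set
$$Eu:=\theta u+\tilde\theta P((1-\theta)u).$$
This map is linear, bounded $X^s(\Omega_0)\to H^s_D(\Omega)$, and satisfies $(Eu)|_{\Omega_0}=u$.

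The only delicate point is Step 2, namely that $\theta u$ extended by zero is genuinely in $H^s_D(\Omega)$ for non-integer $s\in(1,2)$. I would handle this by noting that, for $s\in[1,2]$, the paper's footnote gives $H^s_D(\Omega)=H^s(\Omega)\cap H^1_0(\Omega)$, which requires only membership in $H^s$ plus zero trace on $\Gamma_2$. Alternatively, one can show the integer cases $s=1,2$ and conclude by interpolation, since the construction of $E$ does not depend on $s$.
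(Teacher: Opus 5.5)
Your decomposition is the one the paper uses: the inner piece is extended by zero, and the outer piece goes through an annulus extension operator and is then multiplied by a second cut-off that vanishes near $\Theta$. But the cut-offs in Step 3 are not nested correctly. With the conditions you impose, the key claim that $\tilde\theta P((1-\theta)u)$ agrees with $(1-\theta)u$ on $\Omega_0$ is false, so $(Eu)|_{\Omega_0}=u$ fails. It fails in two places.

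\begin{itemize}
\item \emph{The shell $\{R_1'<|x|<R_1\}$.} This shell lies in $\Omega_0$, and on it $\theta u=u$ and $(1-\theta)u=0$. Hence $Eu=u+\tilde\theta\,P((1-\theta)u)$ there. You only require $\tilde\theta=0$ on $B_{R_1'}$, so $\tilde\theta$ may be nonzero on the shell. Meanwhile $P((1-\theta)u)$ on the hole $B_{R_1}$ of $A$ is whatever the extension operator produces. A Stein or reflection-type extension has no reason to vanish on the whole shell, even if $(1-\theta)u$ vanishes in a collar of the inner sphere.
\item \emph{The region $\{R_1<|x|<(R_1+R_0)/2\}$.} Here $Eu=u-(1-\tilde\theta)(1-\theta)u$. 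Nothing forces $\tilde\theta=1$ where $\theta\neq1$, since you only ask $\tilde\theta=1$ outside $B_{(R_1+R_0)/2}$.
\end{itemize}

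In addition, the premise that $(1-\theta)u$ vanishes near $|x|=R_1$ inside $A$ needs $\theta=1$ on a neighbourhood of $\overline{B_{R_1}}$. Requiring $\theta=1$ only on $B_{R_1}$ is not enough.

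The repair is the three-radii nesting of the paper. For small $\epsilon>0$, take $\theta=1$ on $B_{R_1+2\epsilon}$, $\tilde\theta=0$ on $B_{R_1+\epsilon}$, and $\tilde\theta=1$ on $\{|x|\geq R_1+2\epsilon\}\supset\operatorname{supp}(1-\theta)$.
\begin{itemize}
\item On $\Omega_0\cap A$ you then get $\tilde\theta P((1-\theta)u)=\tilde\theta(1-\theta)u=(1-\theta)u$.
\item On $\Omega_0\setminus A\subset\overline{B_{R_1}}$ both sides vanish, since $\tilde\theta=0$ there.
\end{itemize}
This is exactly the role of the paper's conditions $\tilde\zeta=0$ on $B_{R_0-3\varepsilon}(0)$ (whose boundary is the inner sphere of $\mathcal A$) and $\tilde\zeta=1$ on $\operatorname{supp}\zeta$.

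Your worry in Step 2 is unnecessary. The footnote in Section 2 gives $H^s_D(\Omega)=H^s(\Omega)\cap H^1_0(\Omega)$ for every $s\in[1,2]$, not only $s\in\{1,2\}$, and the paper uses exactly this. No interpolation argument is needed; such an argument would in any case require identifying $X^s(\Omega_0)$ as an interpolation space between $X^1(\Omega_0)$ and $X^2(\Omega_0)$.
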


\begin{proof}
Choose $\varepsilon>0$ with $\Theta\subset B_{R_0-3\varepsilon}(0)$ and $\zeta,\tilde\zeta\in C^\infty(\mathbb{R}^n;[0,1])$ with $\zeta=0$ on $B_{R_0-2\varepsilon}(0)$, $\zeta=1$ for $|x|\geq R_0-\varepsilon$, $\tilde\zeta=0$ on $B_{R_0-3\varepsilon}(0)$ and $\tilde\zeta=1$ on $\operatorname{supp}\zeta$. Let $$\mathcal A:=\{R_0-3\varepsilon<|x|<R_0\}$$ and let $$E_{\mathcal A}:H^s(\mathcal A)\to H^s(\mathbb{R}^n)$$ be a standard bounded extension operator. For $u\in X^s(\Omega_0)$ define
$$
Eu:=\widetilde{(1-\zeta)u}+\tilde\zeta\,E_{\mathcal A}\big((\zeta u)|_{\mathcal A}\big),
$$
where $\widetilde{(1-\zeta)u}$ denotes the extension by zero of $(1-\zeta)u$ to $\Omega$. Since $(1-\zeta)u$ vanishes for $|x|\geq R_0-\varepsilon$, its extension by zero belongs to $H^s(\Omega)$; since $\tilde\zeta$ vanishes near $\Theta$, the second term belongs to $H^s(\Omega)$ and vanishes near $\partial\Omega$. Moreover $Eu=0$ on $\partial\Omega=\Gamma_2$ because $u=0$ there. Hence $$Eu\in H^s(\Omega)\cap H^1_0(\Omega)=H^s_D(\Omega),$$ and $$Eu=(1-\zeta)u+\zeta u=u$$ in $\Omega_0$, because $\tilde\zeta=1$ on $\operatorname{supp}\zeta$ and $E_{\mathcal A}(\zeta u)=\zeta u$ in $\mathcal A$.
\end{proof}

\begin{proof}[Proof of Theorem \ref{principal}]
We apply Theorem \ref{nlin_th} with the cut-off $\varphi$ in \eqref{phi} associated with $R:=R_0+1$, so that $\varphi=0$ on $B_{R_0+1}(0)\supset\overline{\Omega_0}$. Let $\delta_1>0$ be given by Theorem \ref{nlin_th} and set $\delta:=\delta_1/\|E\|$, where $E$ is the operator of Lemma \ref{extension}. For $u_0,u_1\in X^s(\Omega_0)$ with norms not larger than $\delta$, we have $\|Eu_0\|_{H^s_D},\|Eu_1\|_{H^s_D}\leq\delta_1$, and Theorem \ref{nlin_th} provides $\tilde h\in C([0,T];H^s_D(\Omega))$ and a solution $v\in C([0,T];H^s_D(\Omega))$ of
$$
\begin{cases}
iv_t+\Delta v+\lambda|v|^2v=\varphi\tilde h,&\ \text{ in }\Omega\times(0,T),\\ v=0,&\ \text{ on }\partial\Omega\times(0,T),
\end{cases}
$$
with $v(0)=Eu_0$ and $v(T)=Eu_1$. Set
$$
u:=v|_{\Omega_0\times[0,T]},\qquad\text{and}\qquad h:=v|_{\Gamma_1\times[0,T]} .
$$
Since $\varphi=0$ on $\Omega_0$, $u$ solves \eqref{s1} in $\mathcal{D}'(\Omega_0\times(0,T))$, and $u\in C([0,T];H^s(\Omega_0))$ with $u(0)=u_0$, $u(T)=u_1$ by Lemma \ref{extension}. Since $v(t)\in H^1_0(\Omega)$, $u=0$ on $\Gamma_2$. Finally, $\Gamma_1$ is a smooth hypersurface contained in the interior of $\Omega$, so the trace theorem gives $h\in C([0,T];H^{s-1/2}(\Gamma_1))$ and $u=h$ on $\Gamma_1$. Thus $u$ solves \eqref{s1}--\eqref{s2} and satisfies \eqref{1.1}.
\end{proof}

\begin{proof}[Proof of Corollary \ref{quintic}]
The internal controllability follows from Theorem \ref{control_n} with $m=5$, combined with Propositions \ref{H1-Control} and \ref{H2-Control}, which cover this case. The boundary controllability is then deduced exactly as in the proof of Theorem \ref{principal}, since the extension and restriction arguments do not depend on the nonlinearity.
\end{proof}

\section{Internal controllability around a nontrivial trajectory \label{Sec4}}
Throughout this section we assume \textup{(H)} of Subsection \ref{sec:linear}, \textup{(W)} and, when stated, \textup{(UCP$_w$)}. We keep the notation $\varphi,\chi,\psi,\Omega',\mathcal R$ of Subsection \ref{sec:linear}, and set $V_1=2\lambda|w|^2$, $V_2=\lambda w^2$. Since $V_1,V_2\in C([0,T];W^{2,\infty}(\Omega))$, multiplication by $V_j$ (and complex conjugation) defines real-linear operators which are bounded on $H^\theta_D(\Omega)$ for every $\theta\in[-2,2]$, uniformly in $t\in[0,T]$ (for $\theta\in[0,2]$ directly, for $\theta\in[-2,0]$ by duality). Consequently \eqref{linearized_int} with $h=0$ and \eqref{adj_w} are well posed, forward and backward, in every $H^\theta_D(\Omega)$, $\theta\in[-2,2]$, by Duhamel's formula and Gronwall's lemma. We denote by $U(t)$ the (real-linear) propagator of \eqref{adj_w}, so that $\|U(t)v_0\|_{H^\theta_D}$ and $\|v_0\|_{H^\theta_D}$ are equivalent, uniformly in $t\in[0,T]$.

\subsection{Linear result}

We first extend the local smoothing effect to negative regularities and to \eqref{adj_w}.

\begin{lemma}\label{smoothing_neg}
Assume \textup{(H)}. For every $T>0$, every real-valued $\eta\in C^\infty_0(\mathbb{R}^n)$ and every $s\in[-2,1]$ there exists $C>0$ such that
\begin{equation}\label{inhom_smooth}
\Big\|\eta\int_0^tW(t-r)\,\eta f(r)\,dr\Big\|_{L^2(0,T;H^{s+1}_D)}\leq C\|\eta f\|_{L^2(0,T;H^{s}_D)} .
\end{equation}
\end{lemma}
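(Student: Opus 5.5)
The range $s\in[-1,1]$ is exactly \eqref{l11} of Proposition \ref{p5}. There the estimate is stated on $(0,T)$ with a constant independent of $T$, and the same cut-off may be used on both sides. So the plan is to treat only $s\in[-2,-1)$, and to deduce it by duality from the case $s=1$, as announced after Proposition \ref{p4}.

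Fix $s\in[-2,-1)$ and set $\sigma:=-s-1\in(0,1]$. Define the operator
$$\mathcal L f(t):=\eta\int_0^tW(t-r)\eta f(r)\,dr .$$
We must show that $\mathcal L$ maps $L^2(0,T;H^s_D)$ into $L^2(0,T;H^{s+1}_D)=L^2(0,T;H^{-\sigma}_D)$. By duality in the pairing $\int_0^T\langle\cdot,\cdot\rangle\,dt$, this is equivalent to showing that the formal adjoint $\mathcal L^*$ maps $L^2(0,T;H^{\sigma}_D)$ into $L^2(0,T;H^{-s}_D)=L^2(0,T;H^{\sigma+1}_D)$.

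To compute $\mathcal L^*$, use that $\eta$ is real-valued and $W(t)^*=W(-t)$, and exchange the order of the $t$ and $r$ integrals:
$$\mathcal L^*g(r)=\eta\int_r^TW(r-t)\eta g(t)\,dt .$$
This is a backward Duhamel integral. Apply the change of variables $t\mapsto T-t$, $r\mapsto T-r$, and use complex conjugation, which commutes with $\Delta_D$ and maps $W(-\tau)$ to $\overline{W(\tau)\overline{\,\cdot\,}}$ (conjugation preserves the $H^\theta_D$ norms). With these, $\mathcal L^*$ becomes unitarily equivalent to the forward operator
$$\tilde g\mapsto\eta\int_0^{t}W(t-r)\eta\tilde g(r)\,dr,\qquad \tilde g(r)=\overline{g(T-r)}.$$
Now apply \eqref{l11} with index $\sigma\in(0,1]\subset[-1,1]$. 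It gives
$$\|\mathcal L^*g\|_{L^2(0,T;H^{\sigma+1}_D)}\le C\|\eta g\|_{L^2(0,T;H^\sigma_D)}\le C'\|g\|_{L^2(0,T;H^\sigma_D)},$$
where the last step uses that multiplication by $\eta$ is bounded on $H^\sigma_D$ for $\sigma\in[0,1]$. By duality, $\|\mathcal Lf\|_{L^2H^{s+1}_D}\le C\|f\|_{L^2H^{s}_D}$.

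This bound has $f$ rather than $\eta f$ on the right-hand side, so one more step is needed. Choose $\eta_1\in C^\infty_0$ real-valued with $\eta_1=1$ on $\operatorname{supp}\eta$. Then $\mathcal L f$ depends only on $\eta f$, and I rewrite
$$\mathcal L f=\eta\int_0^tW(t-r)\,\eta_1(\eta f)\,dr .$$
I then run the same duality argument for the two-cut-off operator $F\mapsto\eta\int_0^tW(t-r)\eta_1F\,dr$. Its adjoint has cut-offs $\eta_1$ outside and $\eta$ inside, and it is bounded because \eqref{l11} tolerates $\eta_1\eta=\eta$ via $\eta F=\eta_1(\eta F)$ on the inside. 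Concretely, the adjoint is
$$\eta_1\int W\,\eta g=\eta_1\int W\,\eta_1(\eta g),$$
and \eqref{l11} applies with the cut-off $\eta_1$ and the datum $\eta g$. Applying the resulting bound to $F=\eta f$ gives $\|\eta f\|_{L^2H^s_D}$ on the right, which is \eqref{inhom_smooth}.

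The main point of care is this duality bookkeeping: the identification $(H^{\theta}_D)'=H^{-\theta}_D$ through the sesquilinear pairing, the Fubini exchange, and the time reversal. The exchange and reversal should first be justified for smooth compactly supported $f$ and $g$, with the general case following by density, which is available since $L^2(0,T;\mathcal D((1-\Delta_D)^\infty))$ is dense in each of the spaces involved. I expect no genuine analytic obstacle. One small point: the constant in \eqref{l11} is independent of $T$, and the time reversal keeps $T$ fixed, so the lemma's constant may depend on $T$, which is harmless.
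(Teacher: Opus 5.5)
Your proof is correct, but it applies duality at a different level than the paper does.

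The paper leaves the time-dependent estimate alone and dualizes the cut-off resolvent bound of Proposition \ref{p4} instead. The adjoint of $\eta_1(-\Delta_D-z)^{-1}\eta_1$ is $\eta_1(-\Delta_D-\overline z)^{-1}\eta_1$, and the set of spectral parameters is invariant under conjugation. This gives $H^{-2}_D\to H^{-1}_D$ from the case $s=1$. The paper then interpolates with $s=-1$ and asserts that the Plancherel-in-time proof of \cite[(2.14)]{BGT1} goes through verbatim.

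You instead treat \eqref{l11} as a black box and dualize the retarded Duhamel operator itself. Its adjoint is the advanced operator $\eta_1\int_r^T W(r-t)\,\eta\, g(t)\,dt$, and time reversal plus complex conjugation bring it back to forward form. Each $s\in[-2,-1)$ then follows directly from \eqref{l11} at $\sigma=-s-1\in(0,1]$, with no interpolation. Your $\eta_1$ device produces $\|\eta f\|$ on the right exactly as the paper's factorization $\eta(-\Delta_D-z)^{-1}(\eta f)=\eta[\eta_1(-\Delta_D-z)^{-1}\eta_1](\eta f)$ does.

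Your route is more self-contained: you never have to check that the proof in \cite{BGT1} uses nothing beyond uniform resolvent bounds, including those for parameters with $\operatorname{Re}z<0$. The paper's route relies on that structure, but in exchange it gives the negative-order cut-off resolvent bounds as a by-product.

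Two small points. Your first single-cut-off pass is superfluous once you run the $\eta_1$ version. Also, since the reversal keeps $T$ fixed and the constant in \eqref{l11} is uniform in $T$, your final constant is in fact independent of $T$.
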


\begin{proof}
For $s\in[-1,1]$ this is \cite[Proposition 2.7, (2.14)]{BGT1}. Let $\eta_1\in C^\infty_0(\mathbb{R}^n)$ be real-valued and equal to $1$ on $\operatorname{supp}\eta$. The proof of \cite[(2.14)]{BGT1} only uses Plancherel's theorem in time and uniform bounds $$\|\eta_1(-\Delta_D-z)^{-1}\eta_1\|_{H^s_D\to H^{s+1}_D}\leq C,$$ for the spectral parameters $z$ which occur, namely $z=(\kappa\pm i\varepsilon)^2$ with $\kappa\in\mathbb{R}$ and $0<\varepsilon\ll1$ (see \eqref{l9}), and $\operatorname{Re}z<0$ \cite[Remark 2.6]{BGT1}; indeed, $$\eta(-\Delta_D-z)^{-1}(\eta f)=\eta\,[\eta_1(-\Delta_D-z)^{-1}\eta_1](\eta f),$$ which explains why only $\|\eta f\|$ appears on the right-hand side of \eqref{inhom_smooth}. Since $\eta_1$ is real-valued, the adjoint of $\eta_1(-\Delta_D-z)^{-1}\eta_1$ with respect to $\langle\cdot,\cdot\rangle$ is $\eta_1(-\Delta_D-\overline z)^{-1}\eta_1$, and this set of parameters is invariant under conjugation. Hence the bound with $s=1$ gives the same bound from $H^{-2}_D$ to $H^{-1}_D$ and, by interpolation with the case $s=-1$, for every $s\in[-2,-1]$. The proof of \cite[(2.14)]{BGT1} then applies verbatim.
\end{proof}

For $\sigma\in\{-1,-2\}$ we set $\tau:=\sigma+2\in\{1,0\}$ and, for $v_0\in H^\sigma_D(\Omega)$,
$$
v(t):=U(t)v_0,\qquad y(t):=\mathcal Rv(t),\qquad y_0:=\mathcal Rv_0 ,
$$
so that $\|y_0\|_{H^\tau_D}=\|v_0\|_{H^\sigma_D}$. Since $$iv_t+\Delta_Dv=-V_1v+V_2\overline v$$ and $\mathcal R$ commutes with $W(t)$,
\begin{equation}\label{y_duhamel}
y(t)=W(t)y_0+i\int_0^tW(t-r)f(r)\,dr,\qquad \text{with} \quad f:=\mathcal R\big(V_1v-V_2\overline v\big),
\end{equation} 
and, for every $\theta\in[0,1]$ such that $v_0\in H^{\theta-2}_D(\Omega)$ (equivalently, $y_0\in H^\theta_D(\Omega)$) and every $r\in[0,T]$,
\begin{equation}\label{f_bound}
\|f(r)\|_{H^\theta_D}\leq C\|v(r)\|_{H^{\theta-2}_D}\leq C\|v_0\|_{H^{\theta-2}_D}=C\|y_0\|_{H^\theta_D},\qquad \|y(r)\|_{H^\theta_D}\leq C\|y_0\|_{H^\theta_D}.
\end{equation}

\begin{lemma}\label{smooth_w}
Assume \textup{(H)} and \textup{(W)}. For every real-valued $\eta\in C^\infty_0(\mathbb{R}^n)$ and every $\theta\in[0,1]$ there exists $C>0$ such that, for every $v_0\in H^{\theta-2}_D(\Omega)$, the function $y(t)=\mathcal RU(t)v_0$ satisfies
$$
\int_0^T\|\eta y(t)\|^2_{H^{\theta+1/2}_D}\,dt\leq C\|y_0\|^2_{H^\theta_D},\qquad y_0=\mathcal Rv_0 .
$$
\end{lemma}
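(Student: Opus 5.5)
We use the Duhamel representation \eqref{y_duhamel}, $y(t)=W(t)y_0+i\int_0^tW(t-r)f(r)\,dr$, and estimate the two terms separately after multiplication by $\eta$. For the free part, Proposition \ref{smoothing} with $s=\theta\in[0,1]$ gives directly
$$
\int_0^T\|\eta W(t)y_0\|^2_{H^{\theta+1/2}_D}\,dt\leq C\|y_0\|^2_{H^\theta_D}.
$$
So everything reduces to the inhomogeneous term $\eta\int_0^tW(t-r)f(r)\,dr$. The difficulty is that $f=\mathcal R(V_1v-V_2\overline v)$ is not compactly supported in $x$, because $w$ need not be localized. Hence Lemma \ref{smoothing_neg}, which requires the forcing to be of the form $\eta f$, cannot be applied directly.

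The plan is to avoid localization for the inhomogeneous term and use the dual (``$TT^*$'') form of the homogeneous smoothing estimate. By Minkowski's inequality,
$$
\Big\|\eta\int_0^tW(t-r)f(r)\,dr\Big\|_{L^2(0,T;H^{\theta+1/2}_D)}\leq\int_0^T\big\|\eta W(\cdot-r)f(r)\big\|_{L^2(r,T;H^{\theta+1/2}_D)}\,dr .
$$
By Proposition \ref{smoothing} with $s=\theta$, applied on an interval of length at most $T$ with the time-translated datum $f(r)$, each inner norm is bounded by $C\|f(r)\|_{H^\theta_D}$. The constant in Proposition \ref{smoothing} is independent of the time interval, and the time translation changes nothing since $W$ is a group. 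Hence the inhomogeneous contribution is at most $C\int_0^T\|f(r)\|_{H^\theta_D}\,dr$. This is the Christ--Kiselev-free, $L^1_t$ version of the argument used in Proposition \ref{H1-Control}(ii).

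Finally, \eqref{f_bound} gives $\|f(r)\|_{H^\theta_D}\leq C\|y_0\|_{H^\theta_D}$ uniformly in $r\in[0,T]$. This bound comes from the boundedness of multiplication by $V_1,V_2$ on $H^{\theta-2}_D$, the fact that $\mathcal R$ is an isometry $H^{\theta-2}_D\to H^\theta_D$, and the uniform equivalence of $\|U(t)v_0\|_{H^{\theta-2}_D}$ with $\|v_0\|_{H^{\theta-2}_D}$. It gives the contribution $CT\|y_0\|_{H^\theta_D}$. Squaring and adding the two pieces yields the claim, with $C$ depending on $T$, $\eta$, $\theta$ and the norm of $w$ in (W).

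The step that needs care is the first reduction. One must check that Proposition \ref{smoothing} applies to arbitrary data in $H^\theta_D$, with a constant uniform in the length of the interval, so that the shifted intervals $(r,T)$ cause no loss. If a localized version were needed instead (for instance to gain a full derivative as in \eqref{inhom_smooth}), one would split $f=\eta_1 f+(1-\eta_1)f$. The far part would then be handled by the same Minkowski argument, and the near part by Lemma \ref{smoothing_neg}. For the half-derivative gain claimed here, however, the global $L^1_tH^\theta_D$ bound on $f$ suffices, and this is the route I would take.
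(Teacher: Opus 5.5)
Your proof is correct and follows the paper's argument: Proposition \ref{smoothing} with $s=\theta$ handles $W(t)y_0$, and the Duhamel term in \eqref{y_duhamel} is handled by Minkowski's inequality, Proposition \ref{smoothing} on each interval $[r,T]$ (with a constant uniform in $r$), and \eqref{f_bound}. The only quibble is terminological: nothing of dual or $TT^*$ type is used, only the homogeneous estimate applied slice by slice, and, as you note, Lemma \ref{smoothing_neg} is not needed here.
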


\begin{proof}
Apply Proposition \ref{smoothing} to $W(t)y_0$ and, for the Duhamel term in \eqref{y_duhamel}, Minkowski's inequality together with Proposition \ref{smoothing} on each interval $[r,T]$ (the constant does not depend on $r$, since $[0,T-r]\subset[0,T]$), and \eqref{f_bound}.
\end{proof}

\begin{lemma}[Observability up to a compact term]\label{relaxed_w}
Assume \textup{(H)} and \textup{(W)}, and let $\sigma\in\{-1,-2\}$. There exists $C>0$ such that for every $v_0\in H^\sigma_D(\Omega)$
\begin{equation}\label{relaxed_w_est}
\|v_0\|^2_{H^\sigma_D}\leq C\left(\int_0^T\|\varphi v(t)\|^2_{H^\sigma_D}\,dt+\int_0^T\|\psi y(t)\|^2_{L^2}\,dt\right).
\end{equation}
\end{lemma}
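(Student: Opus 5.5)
We mimic Case 2 of the proof of Lemma \ref{relaxed}, now working with $y=\mathcal Rv$ and the Duhamel formula \eqref{y_duhamel} instead of the free group. Since $U(t)$ is not unitary but its norms are uniformly equivalent to the initial ones, we start from
$$
T\|v_0\|^2_{H^\sigma_D}=T\|y_0\|^2_{H^\tau_D}\leq C\int_0^T\|y(t)\|^2_{H^\tau_D}\,dt\leq 2C\int_0^T\|\varphi y(t)\|^2_{H^\tau_D}\,dt+2C\int_0^T\|\chi y(t)\|^2_{H^\tau_D}\,dt .
$$
Here we use $\|y(t)\|_{H^\tau_D}\simeq\|y_0\|_{H^\tau_D}$, which follows from \eqref{f_bound} together with backward well-posedness.

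\textbf{The $\chi$ term.} If $\tau=0$, we bound $\|\chi y\|_{L^2}\leq\|\psi y\|_{L^2}$ directly. If $\tau=1$, we apply Lemma \ref{smooth_w} with $\theta=\tfrac12$ and $\eta=\chi$ to get
$$
\int_0^T\|\chi y\|^2_{H^1_D}\,dt\leq C\|y_0\|^2_{H^{1/2}_D}.
$$
We then interpolate, $\|y_0\|^2_{H^{1/2}_D}\leq\varepsilon\|y_0\|^2_{H^1_D}+C_\varepsilon\|y_0\|^2_{L^2}$, and absorb the first piece into the left-hand side. It remains to control $\|y_0\|^2_{L^2}$. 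For this we use the $\tau=0$ version of the same decomposition, that is, \eqref{relaxed_w_est} at level $L^2$ for $y$:
$$
\|y_0\|^2_{L^2}\leq C\Big(\int_0^T\|\varphi y\|^2_{L^2}\,dt+\int_0^T\|\psi y\|^2_{L^2}\,dt\Big),
$$
with $\|\varphi y\|_{L^2}\leq\|\varphi y\|_{H^1_D}$. Together these give
$$
\|y_0\|^2_{H^\tau_D}\leq C\Big(\int_0^T\|\varphi y\|^2_{H^\tau_D}\,dt+\int_0^T\|\psi y\|^2_{L^2}\,dt\Big).
$$

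\textbf{Commutator step.} Exactly as in Lemma \ref{relaxed}, we write $\varphi y=\varphi\mathcal Rv=\mathcal R(\varphi v)-\mathcal R[\Delta,\varphi]\mathcal Rv$ and use $[\Delta,\varphi]y=[\Delta,\varphi](\psi y)$. Since $\sigma\leq-1$, we have $\|[\Delta,\varphi]\psi y\|_{H^\sigma_D}\leq\|[\Delta,\varphi]\psi y\|_{H^{-1}_D}\leq C\|\psi y\|_{L^2}$. This yields
$$
\|\varphi y(t)\|_{H^\tau_D}\leq\|\varphi v(t)\|_{H^\sigma_D}+C\|\psi y(t)\|_{L^2},
$$
and substituting it into the previous bound gives \eqref{relaxed_w_est}. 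The commutator identity only uses that $\varphi$ vanishes near $\partial\Omega$, so the potentials $V_1,V_2$ play no role in this step; they enter only through $f$ in \eqref{y_duhamel}, which Lemma \ref{smooth_w} already handles.

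\textbf{Main obstacle.} The delicate point is the $\tau=1$ case, specifically the need for Lemma \ref{smooth_w} at $\theta=\tfrac12$. This requires $f(r)\in H^{1/2}_D$ uniformly in $r$, which holds by \eqref{f_bound} because $v_0\in H^{-1}_D\subset H^{-3/2}_D$. The $\tau=0$ case needs no smoothing at all.
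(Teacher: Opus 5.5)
Your proposal is correct and follows the paper's proof essentially step by step. The common steps are: the uniform equivalence $\|y(t)\|_{H^\tau_D}\simeq\|y_0\|_{H^\tau_D}$ from forward/backward well-posedness; the split $y=\varphi y+\chi y$; for $\tau=1$, Lemma \ref{smooth_w} with $\theta=\tfrac12$, interpolation, absorption, and the $L^2$-level bound for $\|y_0\|_{L^2}$; and the commutator estimate $\|\varphi y\|_{H^\tau_D}\leq\|\varphi v\|_{H^\sigma_D}+C\|\psi y\|_{L^2}$ taken over from Lemma \ref{relaxed}.
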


\begin{proof}
By \eqref{f_bound} with $\theta=\tau$ (and the analogous bound backwards in time), $$\|y_0\|^2_{H^\tau_D}\leq\frac CT\int_0^T\|y(t)\|^2_{H^\tau_D}dt\leq\frac{2C}T\int_0^T\big(\|\varphi y\|^2_{H^\tau_D}+\|\chi y\|^2_{H^\tau_D}\big)\,dt.$$ If $\tau=0$ we use $\|\chi y\|_{L^2}\leq\|\psi y\|_{L^2}$. If $\tau=1$, Lemma \ref{smooth_w} with $\eta=\chi$, $\theta=\frac12$ and interpolation give $$\int_0^T\|\chi y\|^2_{H^1_0}\leq C\|y_0\|^2_{H^{1/2}_D}\leq\varepsilon\|y_0\|^2_{H^1_0}+C_\varepsilon\|y_0\|^2_{L^2},$$ while, as above, $$\|y_0\|^2_{L^2}\leq\frac{2C}T\int_0^T(\|\varphi y\|^2_{L^2}+\|\psi y\|^2_{L^2})dt.$$ Absorbing the $\varepsilon$-term, in both cases
$$
\|y_0\|^2_{H^\tau_D}\leq C\int_0^T\big(\|\varphi y\|^2_{H^\tau_D}+\|\psi y\|^2_{L^2}\big)\,dt .
$$
Finally, exactly as in the proof of Lemma \ref{relaxed}, we have $$\|\varphi y\|_{H^\tau_D}=\|\varphi\mathcal Rv\|_{H^\tau_D}\leq\|\varphi v\|_{H^\sigma_D}+C\|\psi y\|_{L^2},$$ and $$\|y_0\|_{H^\tau_D}=\|v_0\|_{H^\sigma_D},$$
giving the result.
\end{proof}

\begin{lemma}[Compactness]\label{compact_w}
Assume \textup{(H)} and \textup{(W)}, and let $\sigma\in\{-1,-2\}$. If $v_{j,0}\rightharpoonup v_0$ weakly in $H^\sigma_D(\Omega)$, then, with $y_j:=\mathcal RU(\cdot)v_{j,0}$ and $y:=\mathcal RU(\cdot)v_0$, $$\int_0^T\|\psi(y_j-y)\|^2_{L^2}dt\to0.$$
\end{lemma}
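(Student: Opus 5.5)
We follow the scheme of Lemma \ref{compact}, now for the real-linear propagator $U(t)$. Set $\tau=\sigma+2\in\{0,1\}$, $y_{j,0}:=\mathcal R v_{j,0}$, $y_0:=\mathcal R v_0$ and $z_j:=y_j-y=\mathcal R U(\cdot)(v_{j,0}-v_0)$. By real-linearity of $U$, $z_j$ is the function associated with the data $v_{j,0}-v_0$. Since $\mathcal R$ is an isometry $H^\sigma_D\to H^\tau_D$, we have $z_j(0)\rightharpoonup0$ weakly in $H^\tau_D(\Omega)\subset L^2(\Omega)$ and $\|z_j(0)\|_{H^\tau_D}\le M$.

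\textbf{(a) Bounds.} Apply Lemma \ref{smooth_w} with $\eta=\psi$ and $\theta=0$. This is allowed because $v_{j,0}-v_0\in H^{-2}_D$. It gives $\int_0^T\|\psi z_j\|^2_{H^{1/2}_D}dt\le CM^2$, hence a bound in $L^2(0,T;H^{1/2}(\Omega'))$.

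For the time derivative, \eqref{y_duhamel} gives $\partial_t z_j=i\Delta_D z_j-f_j$ with $f_j=\mathcal R(V_1 u_j-V_2\overline{u_j})$, where $u_j:=U(\cdot)(v_{j,0}-v_0)$. By \eqref{f_bound} with $\theta=0$, $\|f_j(t)\|_{L^2}\le CM$ and $\|z_j(t)\|_{L^2}\le CM$ uniformly in $t$. Testing against $\zeta\in C^\infty_0(\Omega')$ exactly as in Lemma \ref{compact}(a), we find that $\partial_t(\psi z_j)$ is bounded in $L^\infty(0,T;H^{-2}(\Omega'))$. The Aubin--Lions--Simon lemma then shows that $(\psi z_j)$ is relatively compact in $L^2(0,T;L^2(\Omega))$.

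\textbf{(b) Weak limit.} This is the step that needs care, because $U$ is only real-linear and depends on time. We show that $\psi z_j\rightharpoonup0$ weakly in $L^2(0,T;L^2(\Omega))$. Fix $g\in L^2(0,T;L^2(\Omega))$. The map
$$\ell:\ a\mapsto \operatorname{Re}\int_0^T\langle\psi\,\mathcal R U(t)a,g(t)\rangle\,dt$$
is a continuous real-linear functional on $H^\sigma_D(\Omega)$, since $U(t)$ is bounded on $H^\sigma_D$ uniformly in $t$, $\mathcal R:H^\sigma_D\to H^\tau_D\subset L^2$, and $\psi$ is bounded. By Riesz's theorem for the real Hilbert structure, $\ell$ is represented by some element of $H^{-\sigma}_D$ through $\operatorname{Re}\langle\cdot,\cdot\rangle$. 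Weak convergence in $H^\sigma_D$ implies weak convergence for the real duality, so $\ell(v_{j,0}-v_0)\to0$. Applying the same argument to $ig$ handles the imaginary part. We do not need an explicit adjoint of $U$ (which would be the propagator of the dual linearized system); continuity of $\ell$ is enough.

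\textbf{(c) Conclusion.} Every subsequence of $(\psi z_j)$ has a further subsequence converging strongly in $L^2(0,T;L^2(\Omega))$ by (a). By (b) that limit must be $0$. Hence the whole sequence satisfies $\int_0^T\|\psi(y_j-y)\|^2_{L^2}dt\to0$, which is the claim.

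We expect the main obstacle to be this identification of the weak limit for the real-linear, time-dependent propagator. The uniform-in-$t$ boundedness of $U(t)$ on $H^{-2}_D$ (from the Gronwall argument recalled at the start of Section \ref{Sec4}) is what makes the functional $\ell$ continuous and closes that step.
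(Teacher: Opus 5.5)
Your proposal is correct and follows the paper's proof step by step: the $L^2(0,T;H^{1/2}(\Omega'))$ bound from Lemma \ref{smooth_w} with $\theta=0$, the $L^\infty(0,T;H^{-2}(\Omega'))$ bound on $\partial_t(\psi z_j)$, Aubin--Lions--Simon, and identification of the weak limit; for the last step you use continuity of the time-integrated real-linear functional $\ell$, where the paper uses weak convergence at each fixed $t$ plus dominated convergence, and both work because $U(t)$ is bounded on $H^\sigma_D$ uniformly in $t$. One harmless slip: \eqref{y_duhamel} gives $\partial_t z_j=i\Delta_D z_j+if_j$ rather than $i\Delta_D z_j-f_j$, which does not affect the estimate since only $\|f_j(t)\|_{L^2}$ enters.
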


\begin{proof}
Let $d_j:=y_j-y=\mathcal RU(\cdot)(v_{j,0}-v_0)$. By \eqref{f_bound} and Lemma \ref{smooth_w} with $\theta=0$, $(\psi d_j)$ is bounded in $L^2(0,T;H^{1/2}(\Omega'))$, and by \eqref{y_duhamel}, $\partial_t(\psi d_j)$ is bounded in $L^\infty(0,T;H^{-2}(\Omega'))$ as in Lemma \ref{compact}. By the Aubin--Lions--Simon lemma, $(\psi d_j)$ is relatively compact in $L^2(0,T;L^2(\Omega))$. For every $t$, $U(t)$ and $\mathcal R$ are bounded real-linear maps, hence $\psi d_j(t)\rightharpoonup0$ weakly in $L^2(\Omega)$; since $\|\psi d_j(t)\|_{L^2}$ is bounded uniformly in $j$ and $t$, dominated convergence gives $\psi d_j\rightharpoonup0$ weakly in $L^2(0,T;L^2(\Omega))$. Hence $\psi d_j\to0$ strongly.
\end{proof}

\begin{lemma}[Regularity of invisible solutions]\label{regularity_w}
Assume \textup{(H)} and \textup{(W)}, let $\sigma\in\{-1,-2\}$ and let $v_0\in H^\sigma_D(\Omega)$ be such that $\varphi U(t)v_0=0$ for all $t\in[0,T]$. Then $v_0\in H^1_0(\Omega)$ and $v=U(\cdot)v_0\in C([0,T];H^1_0(\Omega))$.
\end{lemma}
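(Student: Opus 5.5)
Since $\varphi v=0$ on $\omega\times(0,T)$, the potential terms $V_1v-V_2\overline v$ are not localized, so I cannot argue that $v$ solves the free equation outside a ball. Instead I will use the Duhamel formula \eqref{y_duhamel} for $y=\mathcal R v$ and bootstrap regularity with local smoothing. The key observation is $v=\varphi v+\chi v=\chi v$, so $v=\psi v$ at every time. The forcing $f=\mathcal R(V_1v-V_2\overline v)$ is therefore $\mathcal R$ applied to a compactly supported quantity, $V_1\psi v-V_2\psi\overline v$. I will work directly with $v$ rather than $y$, writing
\[
v(t)=W(t)v_0+i\int_0^tW(t-r)\,\eta\big(V_1v-V_2\overline v\big)(r)\,dr,
\]
where $\eta\in C^\infty_0$ is real-valued with $\eta=1$ on $\operatorname{supp}\psi$. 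Then $v=\eta v$ and the source equals $\eta g$ with $g:=\eta(V_1v-V_2\overline v)$.

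\textbf{Gain in time-averaged regularity.} Assume $v\in C([0,T];H^\theta_D)$ for some $\theta\in[-2,1]$, starting with $\theta=\sigma$. I apply Lemma \ref{smoothing_neg} with cut-off $\eta$ to the Duhamel term, bounding it in $L^2(0,T;H^{\theta+1}_D)$ by $\|\eta g\|_{L^2H^\theta_D}$. This is finite because multiplication by $V_j$ and $\eta$ is bounded on $H^\theta_D$. For the free part $\eta W(t)v_0$ with negative regularity, I will use duality together with Proposition \ref{smoothing}, or equivalently apply that proposition to $\mathcal R v_0$, to get a gain of $1/2$ derivative. Since $v=\eta v$, I conclude $v\in L^2(0,T;H^{\theta+1/2}_D)$.

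\textbf{Upgrade to continuity in time.} Here a pure averaged gain is not enough. I will use time translation: for a.e.\ $t_0$, $v(t_0)\in H^{\theta+1/2}_D$, and the problem restarted at $t_0$ is well posed in that space both forward and backward. By uniqueness in $H^\theta_D$, $v\in C([0,T];H^{\theta+1/2}_D)$. Hence $v_0\in H^{\theta+1/2}_D$, and, given the uniform equivalence of norms under $U(t)$, this holds everywhere on $[0,T]$.

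Iterating the two steps finitely many times (at most six half-steps from $-2$ to $1$) gives $v_0\in H^1_0$ and $v\in C([0,T];H^1_0)$. I stop at $1$ because Lemma \ref{smoothing_neg} is only valid up to $s=1$, and because the smoothing result of Proposition \ref{smoothing} with $s\in[0,1]$ reaches $H^{3/2}$, which is more than needed.

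\textbf{Main obstacle.} The delicate point is the free term at negative regularity: showing that $\eta W(\cdot)v_0\in L^2(0,T;H^{\theta+1/2}_D)$ for $v_0\in H^\theta_D$ with $\theta<0$. Proposition \ref{smoothing} is stated only for $s\in[0,1]$, and multiplication by $\eta$ does not commute with $\mathcal R$. My first attempt is to write $\eta W v_0=\eta(1-\Delta_D)W\mathcal R v_0$ and control the commutator $[\Delta,\eta]$ by the smoothing of $\mathcal R v_0$ with a larger cut-off, as in the proof of Lemma \ref{relaxed}. If that fails, I will avoid the free term altogether. Since $\varphi v\equiv0$, the function $v$ can instead be expressed as a Duhamel integral of the compactly supported inhomogeneous term from a time where smoothing has already been established, using the a.e.\ good times provided by the previous iteration.
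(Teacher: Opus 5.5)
Your localization $v=\chi v=\psi v$, your use of Lemma \ref{smoothing_neg} on the Duhamel term, and your passage from a.e.\ good times to $v\in C([0,T];H^{\theta+1/2}_D)$ via forward/backward well-posedness are all fine; the last step is exactly the one in the paper. The gap is the one you flag yourself, the free term $\eta W(\cdot)v_0$ for $v_0\in H^\theta_D$ with $\theta<0$, and neither of your two remedies closes it as stated.

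The commutator identity $\eta(1-\Delta_D)u=(1-\Delta_D)(\eta u)+[\Delta,\eta]u$ holds since $\eta=1$ near $\partial\Omega$. It reduces the estimate to $\|\eta W(\cdot)\mathcal Rv_0\|_{L^2(0,T;H^{\theta+5/2}_D)}$ plus a harmless commutator term. Proposition \ref{smoothing} controls that first term only when $\theta+2\in[0,1]$, i.e.\ $\theta\in[-2,-1]$. Since the free term gains only half a derivative, any iteration starting from $\sigma\in\{-1,-2\}$ must pass through some $\theta\in(-1,0)$; in your scheme this is $\theta=-1/2$. There you would need smoothing from $H^{3/2}_D$ to $L^2(0,T;H^2_D)$, which is outside the range $s\in[0,1]$. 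Your fallback does not remove the free term either. Restarting Duhamel's formula at a good time $t_0$ gives $v(t)=W(t-t_0)v(t_0)+\dots$, which is again a free evolution of a datum that has only the regularity already obtained.

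The idea you are missing, and which the paper uses, is to kill the initial datum with a cut-off in time. Take $\vartheta\in C^\infty_0(0,T)$ with $\vartheta\not\equiv0$, and set $u=\vartheta v$. Then $u(0)=0$, $u=\psi u$, and $iu_t+\Delta_Du=\Phi:=\vartheta(-V_1v+V_2\overline v)+i\vartheta'v$, with $\Phi=\psi\Phi$. Hence $u=-i\int_0^tW(t-r)\psi\Phi(r)\,dr$ contains no free term at all. Lemma \ref{smoothing_neg} then gives $u\in L^2(0,T;H^{\theta+1}_D)$, a full derivative per step. Choose $t_0$ with $\vartheta(t_0)\neq0$ and $u(t_0)\in H^{\theta+1}_D$, and argue as you do; this yields $v\in C([0,T];H^{\theta+1}_D)$, so three steps suffice.

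Alternatively, you can repair your own scheme. Your commutator argument does give $\eta W(\cdot):H^{-1}_D\to L^2(0,T;H^{-1/2}_D)$, using Proposition \ref{smoothing} with $s=1$ and $[\Delta,\eta]:H^{1/2}_D\to H^{-1/2}_D$. Complex interpolation with the case $s=0$ of Proposition \ref{smoothing} then covers $\theta\in(-1,0)$. But this step has to be supplied; it is not in your proposal.
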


\begin{proof}
Since $\psi-1$ vanishes on $B_{R+2}(0)$ and $\varphi=1$ outside $B_{R+1}(0)$, we have $$(\psi-1)v=(\psi-1)\varphi v=0,$$ i.e. $v=\psi v$. Let $\vartheta\in C^\infty_0(0,T)$, $\vartheta\not\equiv0$, and $u:=\vartheta v$. Then $u(0)=0$, $u=\psi u$ and
$$
iu_t+\Delta_Du=\Phi:=\vartheta\big(-V_1v+V_2\overline v\big)+i\vartheta'v,\qquad \Phi=\psi \Phi,
$$
so that $$u=-i\int_0^tW(t-r)\psi \Phi(r)\,dr$$ and $u=\psi u$. If $v\in C([0,T];H^\theta_D)$ with $\theta\in[-2,0]$, then $\Phi\in L^2(0,T;H^\theta_D)$ and Lemma \ref{smoothing_neg} gives $u\in L^2(0,T;H^{\theta+1}_D)$. Since $u(t)\in H^{\theta+1}_D$ for a.e. $t$ and $\{\vartheta\neq0\}$ has positive measure, we may choose $t_0$ with $\vartheta(t_0)\neq0$ and $u(t_0)\in H^{\theta+1}_D$; then $v(t_0)=u(t_0)/\vartheta(t_0)\in H^{\theta+1}_D$, and the well-posedness of \eqref{adj_w} in $H^{\theta+1}_D$, forward and backward, together with uniqueness in $H^\theta_D$, gives $v\in C([0,T];H^{\theta+1}_D)$. Starting from $\theta=\sigma$ and iterating at most three times we reach $v\in C([0,T];H^1_0(\Omega))$.
\end{proof}

\begin{proposition}[Observability]\label{obs_w}
Assume \textup{(H)}, \textup{(W)} and \textup{(UCP$_w$)}, and let $\sigma\in\{-1,-2\}$. There exists $C>0$ such that every solution of \eqref{adj_w} with $v_0\in H^\sigma_D(\Omega)$ satisfies
\begin{equation}\label{obs_w_est}
\|v_0\|^2_{H^\sigma_D}\leq C\int_0^T\|\varphi v(t)\|^2_{H^\sigma_D}\,dt .
\end{equation}
\end{proposition}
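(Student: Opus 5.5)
The plan is the compactness--uniqueness argument of Proposition \ref{obs}, with the spectral argument of Lemma \ref{UCP} replaced by Lemma \ref{regularity_w} and \textup{(UCP$_w$)}. Argue by contradiction: if \eqref{obs_w_est} fails, pick $v_{j,0}\in H^\sigma_D(\Omega)$ with
$$\|v_{j,0}\|_{H^\sigma_D}=1,\qquad \int_0^T\|\varphi v_j(t)\|^2_{H^\sigma_D}\,dt\leq\frac1j,$$
where $v_j=U(\cdot)v_{j,0}$. After extracting a subsequence, $v_{j,0}\rightharpoonup v_0$ weakly in $H^\sigma_D(\Omega)$.

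\emph{Step 1: the limit is invisible.} For each $t$, $U(t)$ is a bounded real-linear map on $H^\sigma_D$, hence weakly continuous. So $\varphi v_j(t)\rightharpoonup\varphi v(t)$ weakly in $H^\sigma_D$ for every $t$, with $v=U(\cdot)v_0$. Since the norms $\|\varphi v_j(t)\|_{H^\sigma_D}$ are bounded uniformly in $j,t$, dominated convergence (as in Lemma \ref{compact_w}) gives $\varphi v_j\rightharpoonup\varphi v$ weakly in $L^2(0,T;H^\sigma_D)$. On the other hand $\varphi v_j\to0$ strongly, so $\varphi v=0$ a.e. in $t$. By continuity of $t\mapsto\varphi v(t)$ in $H^\sigma_D$, this holds for all $t\in[0,T]$.

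\emph{Step 2: the limit vanishes.} By Lemma \ref{regularity_w}, $v\in C([0,T];H^1_0(\Omega))$ solves \eqref{adj_w}. Since $\varphi>0$ on $\omega$, $v=0$ in $\omega\times(0,T)$, and \textup{(UCP$_w$)} yields $v\equiv0$, so $v_0=0$. The real-linearity of $U$ causes no trouble here: weak convergence and weak closedness only need real-linear bounded maps.

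\emph{Step 3: contradiction.} With $v_0=0$, Lemma \ref{compact_w} gives $\int_0^T\|\psi y_j\|^2_{L^2}\,dt\to0$, where $y_j=\mathcal R U(\cdot)v_{j,0}$. Lemma \ref{relaxed_w} then gives
$$1=\|v_{j,0}\|^2_{H^\sigma_D}\leq C\Big(\frac1j+\int_0^T\|\psi y_j\|^2_{L^2}\,dt\Big)\to0,$$
which is a contradiction.

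This route avoids the finite-dimensionality and $\Delta_D$-invariance argument of Lemma \ref{UCP}, which fails for time-dependent potentials, because the uniqueness input is now the assumed \textup{(UCP$_w$)}. The only delicate point is that \textup{(UCP$_w$)} is stated in $C([0,T];H^1_0)$ while $v_0$ lies only in $H^\sigma_D$, $\sigma\in\{-1,-2\}$. Lemma \ref{regularity_w} closes exactly this gap, so I expect no further obstacle beyond checking that invisibility holds for every $t$, which Step 1 provides.
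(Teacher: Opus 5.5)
Your proposal is correct and follows the paper's proof essentially step for step: a normalized contradiction sequence, invisibility of the weak limit via the pointwise weak convergence and dominated convergence argument of Lemma~\ref{compact_w}, the upgrade to $C([0,T];H^1_0(\Omega))$ by Lemma~\ref{regularity_w} so that \textup{(UCP$_w$)} applies, and then Lemma~\ref{compact_w} with $v_0=0$ inserted into Lemma~\ref{relaxed_w}. Your two additional remarks, namely continuity in $t$ to pass from a.e.\ $t$ to every $t\in[0,T]$, and that real-linearity of $U(t)$ does not affect weak continuity, make explicit points the paper leaves implicit.
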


\begin{proof}
Otherwise there are $v_{j,0}$ with $$\|v_{j,0}\|_{H^\sigma_D}=1$$ and $$\int_0^T\|\varphi v_j\|^2_{H^\sigma_D}dt\to0.$$ Up to a subsequence, $v_{j,0}\rightharpoonup v_0$ weakly in $H^\sigma_D$; as in Lemma \ref{compact_w}, $\varphi v_j\rightharpoonup\varphi v$ weakly in $L^2(0,T;H^\sigma_D)$, hence $\varphi v=0$ on $[0,T]$. By Lemma \ref{regularity_w}, $v\in C([0,T];H^1_0(\Omega))$, and, since $\varphi>0$ in $\omega$ by \eqref{phi}, $v=0$ in $\omega\times(0,T)$; by \textup{(UCP$_w$)}, $v\equiv0$, so $v_0=0$ and $y\equiv0$. Lemma \ref{compact_w} gives $$\int_0^T\|\psi y_j\|^2_{L^2}\to0,$$ and Lemma \ref{relaxed_w} yields $$1=\|v_{j,0}\|^2_{H^\sigma_D}\to0,$$ a contradiction.
\end{proof}

\begin{proof}[Proof of Theorem \ref{lin_ar}]
Let $\sigma=-s$ and $\Lambda_s=(1-\Delta_D)^s$. We use the real duality $\operatorname{Re}\langle\cdot,\cdot\rangle$. If $z$ solves \eqref{linearized_int} and $v$ solves \eqref{adj_w}, then for smooth data
$$
\frac{d}{dt}\operatorname{Re}\langle z,v\rangle=\operatorname{Re}\langle -i\varphi h,v\rangle ,
$$
because the contributions of $\Delta$, $V_1$ and $V_2$ cancel (this is the reason for the sign of $V_2$ in \eqref{adj_w}). For $v_0\in H^\sigma_D$ let $h:=-i\Lambda_s^{-1}(\varphi v)\in C([0,T];H^s_D)$, let $z$ solve \eqref{linearized_int} backwards with $z(T)=0$, and set $\mathcal S_wv_0:=z(0)$. Then
$$
\operatorname{Re}\langle\mathcal S_wv_0,v_0\rangle=\operatorname{Re}\Big(i\int_0^T\langle h,\varphi v\rangle\,dt\Big)=\int_0^T\|\varphi v(t)\|^2_{H^\sigma_D}\,dt\geq C^{-1}\|v_0\|^2_{H^\sigma_D}
$$
by Proposition \ref{obs_w}. The Lax--Milgram theorem, applied in the real Hilbert spaces $H^\sigma_D$ and $H^s_D=(H^\sigma_D)'$, shows that the real-linear operator $\mathcal S_w$ is an isomorphism, which gives null controllability. Exact controllability follows as in Theorem \ref{exact_lin}, using the invertibility of the propagator of \eqref{linearized_int} with $h=0$. We denote by $G_w:H^s_D\times H^s_D\to C([0,T];H^s_D)$ the resulting bounded real-linear control operator.
\end{proof}

\subsection{Nonlinear result}

\begin{proof}[Proof of Theorem \ref{nlin_ar}]
Let $X_T$ be the space of Proposition \ref{H1-Control} if $s=1$, and that of Proposition \ref{H2-Control} if $s=2$. Writing $v=w+z$, $z$ solves
\begin{equation}\label{z_eq}
iz_t+\Delta z+V_1z+V_2\overline z+F(w,z)=\varphi h,\qquad\text{with}\quad F(w,z):=\lambda\big(2w|z|^2+\overline wz^2+|z|^2z\big),
\end{equation}
and  $z(0)=z_0:=v_0-w(0)$, we must reach $z(T)=z_T:=v_T-w(T)$. Let $S(t,r)$ be the propagator of \eqref{linearized_int} with $h=0$. Because of the term $V_2\overline z$, $S(t,r)$ is only real-linear, so that in general $S(t,r)(if)\neq iS(t,r)f$; the factor $i$ must therefore be kept inside the propagator, and \eqref{z_eq} is equivalent to
$$
z(t)=S(t,0)z_0+\int_0^tS(t,r)\big(iF(w,z)(r)\big)\,dr+\int_0^tS(t,r)\big(-i\varphi h(r)\big)\,dr .
$$
In the same way, the solution of \eqref{linearized_int} is $$S(t,0)z_0+\int_0^tS(t,r)(-i\varphi h(r))\,dr,$$ and $G_w$ is defined accordingly. For $z\in X_T$ we set
$$
N_w(z)(t):=\int_0^tS(t,r)\big(iF(w,z)(r)\big)\,dr,\qquad h_z:=G_w\big(z_0,\,z_T-N_w(z)(T)\big),
$$
and
$$
\mathcal T_w(z)(t):=S(t,0)z_0+N_w(z)(t)+\int_0^tS(t,r)\big(-i\varphi h_z(r)\big)\,dr .
$$
By definition of $G_w$, the sum of the first and last terms equals $z_0$ at $t=0$ and $z_T-N_w(z)(T)$ at $t=T$, so that $\mathcal T_w(z)(0)=z_0$ and $\mathcal T_w(z)(T)=z_T$, and a fixed point of $\mathcal T_w$ solves \eqref{z_eq} with $h=h_z$. Since $$h_z-h_{\tilde z}=-G_w\big(0,N_w(z)(T)-N_w(\tilde z)(T)\big)$$ and multiplication by $i$ preserves all the norms involved, the estimates of the proof of Theorem \ref{control_n} carry over, with $W(t-r)$ replaced by $S(t,r)$, $g$ by $F(w,\cdot)$ and $G$ by $G_w$ (see also \cite[Proposition 8.1]{Laurent2}), provided that $S$ satisfies (i)--(ii) of Theorem \ref{control_n} and that $F$ satisfies the analogue of (iii). Properties (i)--(ii) for $S$ follow from those for $W$ through the Duhamel formula
$$
S(t,r)u_0=W(t-r)u_0+i\int_r^tW(t-\rho)\big(V_1S(\rho,r)u_0+V_2\overline{S(\rho,r)u_0}\big)\,d\rho
$$
(here $i$ may be taken out of the integral, since $W$ is complex-linear), Gronwall's lemma and the boundedness of $V_1,V_2$ on $H^s_D$; this applies both to $X_T=C([0,T];H^1_0)\cap L^4(0,T;L^\infty)$ ($s=1$) and to $X_T=C([0,T];H^2_D)$ ($s=2$), for $n\in\{2,3\}$. Property (iii) is replaced by
$$
\begin{gathered}
\|F(w,z)\|_{L^1(0,T;H^s_D)}\leq C\big(\|z\|^2_{X_T}+\|z\|^3_{X_T}\big),\\
\|F(w,z)-F(w,\tilde z)\|_{L^1(0,T;H^s_D)}\leq C\big(\|z\|_{X_T}+\|\tilde z\|_{X_T}+\|z\|^2_{X_T}+\|\tilde z\|^2_{X_T}\big)\|z-\tilde z\|_{X_T},
\end{gathered}
$$
which follow as in Propositions \ref{H1-Control} and \ref{H2-Control}, using $w\in C([0,T];W^{1,\infty})$ for the quadratic terms when $s=1$ and the algebra property of $H^2$ when $s=2$. With these estimates, $$\|\mathcal T_w(z)\|_{X_T}\leq C_1\delta'+C_2(\|z\|^2_{X_T}+\|z\|^3_{X_T})$$ and $$\|\mathcal T_w(z)-\mathcal T_w(\tilde z)\|_{X_T}\leq C_2(\|z\|_{X_T}+\|\tilde z\|_{X_T}+\|z\|^2_{X_T}+\|\tilde z\|^2_{X_T})\|z-\tilde z\|_{X_T},$$ where $$\delta':=\|z_0\|_{H^s_D}+\|z_T\|_{H^s_D}\leq2\delta.$$ Hence $\mathcal T_w$ maps the closed ball of radius $M=4C_1\delta$ of $X_T$ into itself and is a contraction there, for $\delta$ small enough.
\end{proof}

\begin{remark}
\textup{(UCP$_w$)} is used only once, in the proof of Proposition \ref{obs_w}, for the weak limit of the sequence used in the contradiction argument, which vanishes in $\omega\times(0,T)$; thanks to Lemma \ref{regularity_w}, it is enough to assume it at the energy level $C([0,T];H^1_0(\Omega))$. For $w\equiv0$ it holds under the non-trapping assumption (Lemma \ref{UCP}); for general $w$ we refer to Section \ref{Sec5}, item (A).
\end{remark}

\section{Concluding remarks and open problems \label{Sec5}}

\subsection{Scope of the results}
The results of this paper show that, near the zero solution, the cubic and quintic nonlinear Schr\"odinger equations can be controlled from the outer boundary of a domain surrounding an obstacle, or from a neighbourhood of infinity in an exterior domain, in $H^1$ and in $H^2$ and in dimensions two and three, under the sole assumption that the obstacle is non-trapping; the linear equation is controllable in the whole scale $H^s_D(\Omega)$, $s\in[-2,2]$. Near a nontrivial trajectory satisfying \textup{(W)}, internal controllability in the exterior domain holds under the additional unique continuation hypothesis \textup{(UCP$_w$)}. Non-trapping is the natural geometric condition here: by Remark \ref{rem_geo} it is equivalent to the geometric control condition for $\omega=\{|x|>R\}$, which is the classical sufficient condition for the observability of the Schr\"odinger equation on bounded domains \cite{Lebeau} (although, unlike for the wave equation, it is not necessary in general; see, e.g., \cite{RoZhaMMM} and \cite{BuZw}); in the present unbounded setting, its sufficiency is precisely what the local smoothing approach provides. In particular, the multiplier condition of Machtyngier \cite{Mach} (Remark \ref{rem_geo}), which in our geometry amounts to the star-shapedness of $\Theta$, is not needed.

From the methodological point of view, the main point is that the local smoothing effect of \cite{BGT1} plays two roles at once, as in the stabilization result of \cite{BoCa}. It replaces the multiplier identity in the proof of observability, and thus removes any boundary term on $\partial\Theta$; and it provides the compactness and the gain of regularity needed in the compactness--uniqueness argument, without proving any new microlocal propagation result up to the boundary (the geometric information is entirely contained in the estimates of \cite{BGT1}). This mechanism only uses that the uncontrolled region $\Omega\setminus\omega$ is bounded and that the obstacle is non-trapping, and it should apply to other dispersive equations in exterior domains for which a local smoothing effect is available.

\subsection{Open problems}
We conclude with some questions left open by this work.

\smallskip
\noindent\textbf{(A) Unique continuation around nontrivial trajectories.} Theorems \ref{lin_ar} and \ref{nlin_ar} rely on \textup{(UCP$_w$)}, as the corresponding results in \cite{Laurent2,CaCoCa}. For $w\equiv0$, this property follows from the spectral argument of Lemma \ref{UCP}. Still, for a general trajectory the potentials $V_1,V_2$ depend on time and are in general not analytic, so that neither spectral nor Holmgren-type arguments apply. Carleman estimates \cite{LaTriZhang,MeOsRo} are the natural tool; they usually require additional geometric assumptions, such as the star-shapedness of $\Theta$, and proving \textup{(UCP$_w$)} for every non-trapping obstacle remains open.

\smallskip
\noindent\textbf{(B) Lower regularity.} The linear control result holds in the whole scale $H^s_D(\Omega)$, $s\in[-2,2]$ (Theorem \ref{lin_th}), and the nonlinear results extend to $s\in(\frac n2,2]$ (Remark \ref{rem_algebra}). Below the energy space, nonlinear results in $H^s_D(\Omega)$, $s\in[0,1)$, would require a Cauchy theory with a forcing term and estimates of the nonlinearity in a suitable space $X_T\subset C([0,T];H^s_D(\Omega))$, as in Theorem \ref{control_n}. In dimension three, the intermediate range $s\in(1,\frac32]$ would require, in addition to the $L^4_tL^\infty_x$ bound, Moser-type estimates in $H^s_D(\Omega)$. The $L^2$ case in dimension two is more delicate, since the cubic equation is then mass-critical and \cite{BGT1} provides no Cauchy theory at this level.

\smallskip
\noindent\textbf{(C) The quintic equation beyond the zero solution.} Corollary \ref{quintic} is a local result near the zero solution. An analogue of Theorems \ref{lin_ar} and \ref{nlin_ar} for the quintic equation, with the potentials $V_1=3\lambda|w|^4$ and $V_2=2\lambda|w|^2w^2$, should follow along the same lines, under the corresponding unique continuation hypothesis. Large-data controllability results in the energy-critical case remain open even on $\mathbb{R}^3$ \cite{BrCaCaDo}; one possible approach would combine a stabilization mechanism with local controllability results, as in item (F) below.

\smallskip
\noindent\textbf{(D) Neumann boundary conditions.} The Neumann problem \eqref{s1}--\eqref{s3} requires the exterior theory for the Neumann Laplacian, for which local smoothing holds with constants depending on the time interval \cite[Remark 2.9]{BGT1}, as well as a finer analysis of the Neumann traces on $\Gamma_1$.

\smallskip
\noindent\textbf{(E) Trapping obstacles.} For the linear equation, boundary controllability from $\partial B_{R_0}$ still holds for some trapping configurations of strictly convex obstacles \cite{Burq93,BuZw}, at the price of a loss of derivatives; see Figure \ref{fig_geo}(c). Extending the nonlinear results to such geometries would require handling this loss in the perturbation argument.

\smallskip
\noindent\textbf{(F) Global results.} In dimension two and in the defocusing case, the exponential stabilization of \cite{CaCoCa}, combined with the local controllability results of this paper, should lead to global controllability in large time, in the spirit of \cite{DeGeLe,Laurent2}. Making this precise requires matching the functional settings and the geometric assumptions of both works.

\subsection*{Acknowledgments:}  This work began in 2014 while the first author was a visiting researcher at the University of Cincinnati, with further progress made in 2023 as a visiting researcher at Virginia Tech. The first author thanks both host institutions for their warm hospitality.

\end{document}